\let\mathcal\mathscr
\title{\sc On the twisted octonionic eigenvalue problem and some sextics hypersurfaces related to the Cartan cubic}
\author{\sc Roland Abuaf \footnote{Rectorat de Paris, 47 rue des \'Ecoles, 75005 Paris. \textit{rabuaf@gmail.com}}}
\date{}
\let\mathcal\mathscr
\newtheorem{theo}{Theorem}[subsection]
\newtheorem{theo*}{Theorem}
\newtheorem{prob*}{Problem}
\newtheorem{prop*}{Proposition}
\newtheorem{quest*}{Question}
\newtheorem{rem}[theo]{Remark}
\newtheorem{prop}[theo]{Proposition}
\newtheorem{lem}[theo]{Lemma}
\def\O{\mathbb{O}}
\def\OP{\mathbb{OP}}
\def\R0{\mathrm{R^{0}}}
\def\K{\mathrm{K}}
\def\OO{\mathcal{O}}
\def\ll{\lambda}
\def\l3{\lambda_3}
\def\J{\mathrm{J}}
\def\C{\mathbb{C}}
\def\CC{\mathcal{C}}
\def\R{\mathbb{R}}
\def\SS{\mathrm{S_{ODM}}}
\def\H{\mathbb{H}}
\def\A{\mathbb{A}}
\def\k{\kappa}
\def\det{\mathrm{det}}
\newcommand*{\barrow}{\xrightarrow{\hspace*{0.8cm}}}
\newenvironment{proof}
{
\noindent
\textit{\underline{Proof}} :\\
$\blacktriangleright\;$%
}
{\hspace{\stretch{1}}%
$\blacktriangleleft$}
\begin{document}

\maketitle

\begin{abstract}
We revisit the octonionic eigenvalue problem from a geometric perspective. In particular, we study a tautological sheaf defined on a sextic related to this problem, the Ogievetski\^i-Dray-Manogue sextic. We then define and study a twisted version of the octonionic eigenvalue problem. A new sextic arises in this setting and we study the corresponding tautological sheaf supported on it. This twisted version of the octonionic eigenvalue problem is eminently more symmetric than the original one, as reflected by the last result we prove in this paper : the automorphism group of the twisted octonionic eigenvalue problem, though not isomorphic to $\mathrm{E}_6$, acts prehomogeneously on the exceptional Jordan algebra $\J_{3}(\O)$. This is in sharp contrast with the fact that the generic orbit for the action of the automorphism group of the classical octonionic eigenvalue problem has (at least) codimension $6$ in $\J_{3}(\O)$.
\end{abstract}

\vspace{\stretch{1}}

\newpage
\tableofcontents
\begin{section}{Introduction}

One starting point of our paper is the claim made by Bhargava and Ho that there exists a << \textit{less familiar to most people}>> (as they state it) rank $8$ tautological vector bundle on the (complexified) octonionic projective plane $\mathbb{OP}^2 \subset \mathbb{P}^{26}$ (see Theorem $5.27$ of \cite{BargaHo}). Following their definition 5.21, one should believe that this vector bundle is endowed with a \textit{global faithful} right action of the (complexified) octonions $\mathbb{O}$. Let us recall the well-known construction of such a tautological bundle in the case of an associative composition algebra instead of $\O$. 

\bigskip

We denote by $\A$ one of the (complexified) algebras $\R \otimes_{\R} \C$, $\C \otimes_{\R} \C$, $\H \otimes \C$ and we let $\J_{3}(\A)$ be the space of hermitian matrices with coefficients in $\A$:
\[ \J_{3}(\A) = \left\{ \begin{pmatrix} \lambda_1 & c & \overline{b} \\ \overline{c} & \lambda_2 & a \\ b & \overline{a} & \l3 \end{pmatrix}, \ a,b,c \in \A, \ \lambda_i \in \mathbb{C}  \right\}.\] For any $A \in \J_3(\A)$, we denote by $\det_{\A}(A)$ the number: 
\begin{equation*}
\begin{split}
\det_{\A}(A) &= \lambda_1 \lambda_2 \l3 + c(ab) + \big((\overline{b})( \overline{a}) \big)(\overline{c}) - \lambda_2 b \overline{b} - \lambda_1 a \overline{a} - \l3 c \overline{c}\\
                   & =  \lambda_1 \lambda_2 \l3 + 2 \mathrm{Re}(cab)  - \lambda_2 |b|^2 - \lambda_1 |a|^2 - \l3 |c|^2,
\end{split}
\end{equation*}
where $\mathrm{Re}(a)$ is the real part of $a \in \A$ and $|a|^2$ is the square of its norm. This expression corresponds to the Sarrus expansion for a formal determinant of the $3 \times 3$ matrix $A$. We notice the remarkable that $\det_{\A}(A) \in \C$, for all $A  \in \J_3(\A)$. Finally, we denote by $\CC_{\A}$ the cubic hypersurface in $\mathbb{P}(\J_3(\A))$ given by the equation $\det_{\A} = 0$ and $X_{\A}$ the singular locus of this hypersurface. The case by case analysis is well-known (\cite{roberts} for instance):
\begin{itemize}
\item if $\A = \R \otimes \C$, then $X_{\A} = v_2(\mathbb{P}^2) \subset \mathbb{P}^5$ is the Veronese embedding of $\mathbb{P}^2$ and $\CC_{\A}$ is the determinantal cubic for $3*3$ symmetric matrices.
\item if $\A = \C \otimes \C$, then $X_{\A} =  \mathbb{P}^2 \times \mathbb{P}^2 \subset \mathbb{P}^8$ is the Segre embedding of $\mathbb{P}^2 \times \mathbb{P}^2$ and $\CC_{\A}$ is the determinantal cubic for $3*3$ general matrices.
\item if $\A = \H \otimes \C$, then $X_{\A} = \mathrm{Gr}(2,6) \subset \mathbb{P}^{14}$ is the Pl\"ucker embedding of $\mathrm{Gr}(2,6)$ and $\CC_{\A}$ is the Pfaffian cubic for $6*6$ skew-symmetric matrices.
\end{itemize}

In each case above, one can provide an easy ad-hoc construction of a rank $n_{\A} = \dim_{\C} \A$ tautological vector bundle on $X_{\A}$. We shall rather give a uniform construction of this vector bundle which endows it with a global faithful right action of $\A$. For all $A = \begin{pmatrix} \lambda_1 & c & \overline{b} \\ \overline{c} & \lambda_2 & a \\ b & \overline{a} & \l3 \end{pmatrix} \in \J_{3}(\A)$, we denote by $\mathrm{Com}(A)$, the element of $\J_3(\A)$ defined by:
\[ \mathrm{Com}(A) = \begin{pmatrix} \lambda_2 \l3 - a \overline{a} & (\overline{b})(\overline{a})-\l3c & ca - \lambda_2\overline{b} \\ ab - \l3 \overline{c} & \lambda_1 \l3 -b \overline{b} & (\overline{c}) (\overline{b}) - \lambda_1 a \\ (\overline{a})(\overline{c})-\lambda_2b & bc - \lambda_1 \overline{a} & \lambda_1 \lambda_2 - c\overline{c}  \end{pmatrix}.\]
Owing to the associativity of $\A$ and to fundamental relation $\mathrm{Re}(cab) = \mathrm{Re}(abc) = \mathrm{Re}(bca)$, for any $c, b, a \in \A$, we get, for any $A \in \J_{3}(\A)$, the matrix factorization:
\begin{equation} \label{matfact}
\mathrm{Com}(A) \times A = A \times \mathrm{Com}(A) = \det_{\A}(A).I_3,
\end{equation}
where $I_3$ is the identity $3\times 3$ matrix. Furthermore, there is a natural embedding of algebras:
\[ L : \A \barrow \mathcal{M}_{n_{\A},n_{\A}}(\C),\]
sending an element to its left multiplication table and which induces an embedding of algebras:

\[ L : \J_{3}(\A) \barrow \mathcal{M}_{3n_{\A},3n_{\A}}(\C).\]
We fix $e_1,\ldots, e_{n_{\A}}$ a basis of $\A$ over $\C$ and consider the ring $\mathbb{C}[x_1,\ldots,x_{3n_{\A}+3}]$. We put $c = x_1e_1+ \ldots + x_ne_{n_{\A}}$, $b = x_{n_{\A}+1}e_1 + \ldots + x_{2n_{\A}}e_{n_{\A}}$, $a =x_{2n_{\A}+1}e_1 + \ldots + x_{3n_{\A}}e_{n_{\A}}$, $\lambda_1 = x_{3n_{\A}+1}e_1$, $\lambda_2 = x_{3n_{\A}+2}e_1$ and $\l3 = x_{3n_{\A}+3}e_1$. Consider the $3n_{\A} \times 3n_{\A}$ matrix with entries in $\mathbb{C}[x_1, \ldots, x_{3n_{\A}+3}]$ described in $n_{\A} \times n_{\A}$ blocks by:
\[M_{\A} := L(A) = \begin{pmatrix} \lambda_1.I_{n_{\A}} & L_{c} & {}^{t} L_{b} \\ {}^{t} L_{c} & \lambda_2.I_{n_{\A}}  & L_{a} \\ L_{b} & {}^{t} L_{a}& \l3.I_{n_{\A}}\end{pmatrix}.\]
One immediately notices that $M_{\A}$ is a symmetric $3n_{\A} \times 3n_{\A}$ matrix with entries in the ring $\mathbb{C}[x_1,\ldots,x_{3n_{\A}+3}]$. We also consider the matrix:

\[ \mathrm{Com}(M_{\A}) := L(\mathrm{Com}(A)) =  \begin{pmatrix} L_{\lambda_2 \l3 - a \overline{a}} & L_{(\overline{b})(\overline{a})-\l3c} & L_{ca - \lambda_2\overline{b}} \\ L_{ab - \l3 \overline{c}} & L_{\lambda_1 \l3 -b \overline{b}} & L_{(\overline{c}) (\overline{b}) - \lambda_1 a} \\ L_{(\overline{a})(\overline{c})-\lambda_2b} & L_{bc - \lambda_1 \overline{a}} & L_{\lambda_1 \lambda_2 - c\overline{c}}  \end{pmatrix}.\]
The matrix $\mathrm{Com}(M_{\A})$ is also a symmetric $3n_{\A} \times 3n_{\A}$ symmetric matrix with coefficients in $\mathbb{C}[x_1,\ldots,x_{3n_{\A}+3}]$. The matrix factorizations in equation (\ref{matfact}) and the fact that $L$ is a morphism of algebras show that we have a matrix factorizations in $\mathcal{M}_{3n_{\A},3n_{\A}}(\mathbb{C}[x_1,\ldots,x_{3n_{\A}+3}]$:
\begin{equation} \label{matfact2}
\mathrm{Com}(M_{\A}) \times M_{\A} = M_{\A} \times \mathrm{Com}(M_{\A}) = \mathrm{Det}_{\A}.I_{3n_{\A}}, 
\end{equation}
where $\mathrm{Det}_{\A}$ is the cubic polynomial in $\mathbb{C}[x_1,\ldots,x_{3n_{\A}+3}]$ corresponding to $\det_{\A}$. We deduce from Equation (\ref{matfact2}) that the degree $3 n_{\A}$ polynomial $\det_{\mathcal{M}_{3n_{\A},3n_{\A}}}(M_{\A})$ and the cubic polynomial $\mathrm{Det}_{\A}$ define set-theoretically the same hypersurface in $\mathbb{P}^{3n_{\A}+2}$ (that is the locus where $M_{\A}$ is not full-rank). On the other hand, it is known that $\mathrm{Det}_{\A}$ is irreducible (see the appendix of \cite{roberts}), so that we get the relation:
\begin{equation} 
\label{rank}
\det_{\mathcal{M}_{3n_{\A},3n_{\A}}}(M_{\A})  = (\mathrm{Det}_{\A})^{n_{\A}}.
\end{equation}

Let us denote by $E_{\A}$ the cokernel of:
\[ \A^{\oplus 3} \otimes_{\C} \OO_{\mathbb{P}(\J_{3}(\A))}(-1) \stackrel{M_{\A}}\barrow \A^{\oplus 3} \otimes_{\C} \OO_{\mathbb{P}(\J_{3}(\A))}\]
The sheaf $E_{\A}$ is supported on $\CC_{\A}$, as can be read from equation \ref{matfact2}. It is generically of rank $n_{\A}$ as follows from equation \ref{rank}. Since $\A$ is associative, we have:
$$ M_{\A} \times \begin{pmatrix} x \times a \\ y \times a \\ z \times a \end{pmatrix} = \left(M_{\A} \times \begin{pmatrix} x  \\ y  \\ z   \end{pmatrix} \right) \times a,$$
for all $\begin{pmatrix} x \\ y   \\ z   \end{pmatrix} \in \A^{\oplus 3}$ and all $a \in \A$. This implies that $E_{\A}$ is induced with a global faithful right action of $\A$. The sheaf $E_{\A}$ is a tautological sheaf on $\CC_{\A}$. Indeed, for any $A$ in the smooth locus of $\CC_{\A}$, the stalk $E_{\A}|_{A}$ is the cokernel of $A$. In order to transport this sheaf on $X_{\A}$, we recall the conormal construction. We denote by $N_{X_{\A}}$ the normal bundle of $X_{\A}$ in $\mathbb{P}(\J_{3}(\A))$. We have the conormal diagram:
\begin{equation*}
\xymatrix{ & &  \ar[lldd]_{p_{\A}} \mathbb{P}(N_{X_{\A}}^*(1)) \subset \mathbb{P}(\J_3(\A)) \times \mathbb{P}(\J_3(\A)^*) \ar[rrdd]^{q_{\A}} & &  \\
& & & & \\
X_{\A} \subset \mathbb{P}(\J_3(\A))  & & & & X_{\A}^* \subset \mathbb{P}(\J_3(\A)^*)}
\end{equation*}
where $X_{A}^* \subset \mathbb{P}(\J_3(\A)^*)$ is the projective dual of $X_{\A}$. It happens that $X_{A}^* \simeq \CC_{\A}$ (\cite{roberts}) under an identification $\mathbb{P}(\J_3(\A)) \simeq \mathbb{P}(\J_3(\A)^*)$ and that $E_{\A} = (q_{\A})_* \big(p_{\A}^* F_{\A} \big)$, where $F_{\A}$ is a rank $n_{\A}$ vector bundle on $X_{\A}$. The bundle $F_{\A}$ is the tautological bundle we were looking for. Indeed, for any $A \in X_{A}$, the stalk $F_{\A}|_{A}$ is the orthogonal in $(\A^{\oplus 3})^*$ of the kernel of $A$ (which is a $2 n_{\A}$-dimensional complex vector sub-space of $\A^{\oplus 3}$ as all matrices $A \in X_{\A}$ have rank $n_{\A}$). For the convenience of the reader, we describe explicitly $F_{\A}$ for each $\A$:
\begin{itemize}
\item if $\A = \R \otimes \C$, then $F_{\A} = \OO_{\mathbb{P}^2}(1)$,
\item  if $\A = \C \otimes \C$, then $F_{\A} = \OO_{\mathbb{P}^2 \times \mathbb{P}^2}(1,0) \oplus \OO_{\mathbb{P}^2 \times \mathbb{P}^2}(0,1)$,
\item if $\A = \H \otimes \C$, then $F_{\A} = (\mathcal{R}^*)^{\oplus 2}$, where $\mathcal{R}$ is the tautological bundle on $\mathrm{Gr}(2,6)$.

\end{itemize}

In the case of the octonions, the determinant $\det_{\O}(A)$ still makes sense for any $A \in \J_{3}(\O)$ and the equation $\det_{\O} = 0$ defines a cubic in $\mathbb{P}(\J_3(\O))$ known as the Cartan cubic. Its singular locus is the octonionic projective plane $\mathbb{OP}^2 \subset \mathbb{P}(\J_{3}(\O))$, a projective homogeneous space under $\mathrm{E}_6$ (see \cite{roberts}). There are however many key steps where the above construction of $F_{\A}$ fails for $\A = \O$. First of all, the matrix factorization:
$$\mathrm{Com}(A) \times A = A \times \mathrm{Com}(A) = \det_{\O}(A).I_3,$$
does not hold for all $A \in J_{3}(\O)$, due to the lack of associativity of $\O$. Furthermore, the embedding $L : \O \barrow \mathcal{M}_{8,8}(\C)$ is not an algebra embedding (because $\O$ is not associative). Hence, even if for all $A \in \J_{3}(\O)$ we could find $B_{A} \in \J_{3}(\O)$ such that $B_A \times A = A \times B_A = \det_{\O}(A).I_3$, there wouldn't necessarily be any corresponding matrix factorization involving $L(A)$.
\bigskip

Finally, the very notion of a locally free $\O \otimes \OO_{X}$-module on a scheme $X$ is a notoriously challenging one. While alternative modules over $\O$ have been studied by many authors (see for instance \cite{jacobson}), there are serious difficulties to sheafify these modules. Indeed, let $X$ be a scheme over $\mathbb{C}$ and $F$ be a sheaf of locally free (right) $\O \otimes \mathcal{O}_{X}$-modules of rank $n$ over $\O \otimes \OO_{X}$. Let $U,V$ trivializing opens in $X$. The transition map $t_{U,V}$ gives a right $\O$-linear isomorphism:

\begin{equation*}
\begin{split}
t_{U,V} \ : &  \ \mathbb{O} \otimes \C^n \times \left\{ U \cap V \right\} \stackrel{\sim}\barrow \mathbb{O} \otimes \C^n \times \left\{ U \cap V \right\} \\
           &   \ (a \otimes v ,x) \barrow (t_{U,V}(x)(a \otimes v),x).  
\end{split}
\end{equation*} 
The algebra $\O$ being non-associative, the only right $\O$-linear endomorphisms of $\O$ are given by multiplication by pure (complexified) real numbers in $\O$. We thus find that the transition map $t_{U,V}$ is of the form:

\begin{equation*}
\begin{split}
t_{U,V} \ : &  \ \mathbb{O} \otimes \C^n \times \left\{ U \cap V \right\} \stackrel{\sim}\barrow \mathbb{O} \otimes \C^n \times \left\{ U \cap V \right\} \\
           &   \ (a \otimes v ,x) \barrow (a \otimes r_{U,V}(x)(v),x),  
\end{split}
\end{equation*} 
for some cocyle $r \in H^{1}(X, \mathrm{GL}_n)$. As a consequence $F = \O \otimes E$ where $E$ is a vector bundle on $X$, and the global $\O$ structure on $F$ is trivial. This is in sharp contrast with the fact that the vector bundle $F_{\A}$ described above is a locally free $\A \otimes \OO_{X_{\A}}$-module of rank $n_{\A}$ (over $\OO_{X_{\A}}$) whose global right $\A$-structure is not trivial.

\bigskip

The above analysis suggests that the proof we give that the cokernel of the matrix $M_{\A}$ is a tautological rank $n_{\A}$ bundle on $\CC_{\A}$ endowed with a right global $\A$ action can not be adapted to the case $\A = \O$. But it doesn't actually show that the cokernel of $M_{\O}$ is the not, by some miracle, the bundle which existence is claimed by Bhargava and Ho. As a matter of fact, it happens that this cokernel doesn't even live on $\CC_{\O}$ and our first main result describes some of its geometric and representation theoretic properties. Let $c = x_1e_1+ \ldots + x_8e_8$, $b = x_{9}e_1 + \ldots + x_{16}e_n$, $a =x_{17}e_1 + \ldots + x_{24}e_n$, $\lambda_1 = x_{25}e_1$, $\lambda_2 = x_{26}e_1$ and $\l3 = x_{27}e_1$. Consider the $24 \times 24$ matrix with entries in $\mathbb{C}[x_1, \ldots, x_{27}]$ given in $8 \times 8$ blocks by:
\[M_{\O} = \begin{pmatrix} \lambda_1.I_8 & L_{c} &  {}^{t} L_{b} \\ {}^{t} L_{c} & \lambda_2.I_8 & L_{a} \\ L_{b} & {}^{t}L_{a} & \lambda_1.I_8 \end{pmatrix}.\]
Let $\mathrm{S}_{\mathrm{ODM}} $ be the sextic polynomial in $\mathbb{C}[x_1, \ldots, x_{27}]$ defined by:
 \[ \mathrm{S}_{\mathrm{ODM}}= \mathrm{Det}_{\O}^2 - 4 \phi(c,b,a)\mathrm{Det}_{\O} - \Big|[c,b,a] \Big|^2, \]
 where $\phi(c,b,a) = \frac{1}{2} \mathrm{Re}((c \overline{b} - \overline{b}c)a)$, $[c,b,a] = (cb)a-c(ba)$ and $\mathrm{Det}_{\O}$ is the cubic polynomial corresponding to $\det_{\O}$.

\begin{theo*} \label{theo1}

\begin{enumerate}
\item The vanishing locus of $\mathrm{S}_{\mathrm{ODM}}$ is an irreducible sextic (equally denoted by $\mathrm{S}_{\mathrm{ODM}}$) in $\mathbb{P}(\J_{3}(\O))$ which singular locus has codimension $4$ in $\mathbb{P}(\J_{3}(\O))$. Furthermore, up to a finite quotient, we have:
$$\mathrm{Aut}^{0}(\mathrm{S}_{\mathrm{ODM}}) = \mathrm{SL}_{3}(\C) \times \mathrm{SO}_7(\C).$$

\item The cokernel of:
$$ \O^{\oplus 3} \otimes_{\C} \OO_{\mathbb{P}(\J_{3}(\O))}(-1) \stackrel{M_{\O}}\barrow \O^{\oplus 3} \otimes_{\C} \OO_{\mathbb{P}(\J_{3}(\O))}$$
is supported on $\mathrm{S}_{\mathrm{ODM}}$. It has generically rank $4$ and is $ \mathrm{SL}_{3}(\C) \times \mathrm{SO}_7(\C)$-equivariant.

\end{enumerate}
 
\end{theo*}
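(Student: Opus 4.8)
The plan is to reduce the entire statement to a single spectral property of the octonionic left-multiplication operator $K:=L_{c}L_{a}L_{b}$ acting on $\O\otimes\C\cong\C^{8}$. First I would compute $\det(M_{\O})$ by iterated Schur complements, using only the elementary octonionic identities ${}^{t}L_{x}=L_{\overline{x}}$ and $L_{x}L_{\overline{x}}=|x|^{2}I_{8}$ (both consequences of alternativity). Eliminating the bottom $\l3 I_{8}$ block and then the resulting $(2,2)$-block, the four off-diagonal blocks collapse through $L_{b}{}^{t}L_{b}=|b|^{2}I_{8}$, $L_{a}{}^{t}L_{a}=|a|^{2}I_{8}$ and $L_{\overline{b}}L_{\overline{a}}L_{a}L_{b}=|a|^{2}|b|^{2}I_{8}$, leaving, as an identity of rational functions and hence of polynomials,
\[\det(M_{\O})=\det\!\big(\gamma\,I_{8}+K+{}^{t}K\big),\qquad \gamma=\mathrm{Det}_{\O}-2\,\mathrm{Re}(cab).\]
Writing $J=K+{}^{t}K$, a one-line computation gives $K\,{}^{t}K={}^{t}K\,K=N\,I_{8}$ with $N=|a|^{2}|b|^{2}|c|^{2}$, so $K$ is a similarity, ${}^{t}K=N\,K^{-1}$, and $\det K=N^{4}$.

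The crux is the following \textbf{spectral lemma}: for generic $a,b,c$ the characteristic polynomial $\chi_{K}(t)=\det(t\,I_{8}-K)$ is the square of a degree-$4$ polynomial $g$; equivalently $J$ has exactly two eigenvalues $\eta_{1},\eta_{2}$, each of multiplicity $4$. Granting this, the pairing $\mu\leftrightarrow N/\mu$ of the eigenvalues of $K$ forces $g(t)=(t^{2}-\eta_{1}t+N)(t^{2}-\eta_{2}t+N)$. Since ${}^{t}K=N\,K^{-1}$ one has $\det(\gamma I_{8}+J)=N^{-4}\det(K^{2}+\gamma K+N\,I_{8})=N^{-4}\,\chi_{K}(r_{1})\chi_{K}(r_{2})$, where $r_{1},r_{2}$ are the roots of $t^{2}+\gamma t+N$; using $r_{i}^{2}+N=-\gamma r_{i}$ one finds $g(r_{i})=(\gamma+\eta_{1})(\gamma+\eta_{2})\,r_{i}^{2}$, whence
\[\det(M_{\O})=N^{-4}\,g(r_{1})^{2}g(r_{2})^{2}=\big((\gamma+\eta_{1})(\gamma+\eta_{2})\big)^{4}=P^{4},\quad P=\gamma^{2}+(\eta_{1}+\eta_{2})\gamma+\eta_{1}\eta_{2}.\]
A trace computation then identifies $P$ with $\SS$: from $\eta_{1}+\eta_{2}=\tfrac12\mathrm{tr}(K)$ and the identity $\mathrm{tr}(L_{c}L_{a}L_{b})=8\,\mathrm{Re}(cab)-4\,\mathrm{Re}((c\overline{b}-\overline{b}c)a)$ one matches the coefficient of $\mathrm{Det}_{\O}$, while the companion quadratic trace identity produces the term $-|[c,b,a]|^{2}$. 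This gives $\det(M_{\O})=\SS^{4}$, so the cokernel of $M_{\O}$ is supported on $\{\SS=0\}$; and since at a generic point of $\SS$ exactly one factor $\gamma+\eta_{i}$ vanishes, yielding a $4$-dimensional kernel, the cokernel is generically of rank $4$, proving part (2). I expect the spectral lemma to be \textbf{the main obstacle}: it is the operator identity $\big(J-\tfrac14\mathrm{tr}(K)I_{8}\big)^{2}\in\C\cdot I_{8}$, i.e. $K^{2}+({}^{t}K)^{2}\equiv 2\tau(K+{}^{t}K)\pmod{\C\,I_{8}}$ with $\tau=\tfrac14\mathrm{tr}(K)$, which I would prove either by expanding $K^{2}=L_{c}L_{a}L_{b}L_{c}L_{a}L_{b}$ through the Moufang and alternative laws, or conceptually from the $\mathrm{Spin}(7)$/Clifford-module (triality) structure of octonionic multiplication, which forces the spectrum of such triple products to occur with even multiplicities.

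For part (1), write $\SS=\gamma^{2}+s\gamma+q$ with $s=\eta_{1}+\eta_{2}$ and $q=\eta_{1}\eta_{2}$ functions of $(a,b,c)$ alone. Its discriminant in $\gamma$ (equivalently in $\mathrm{Det}_{\O}$, as the two differ only by the shift $2\,\mathrm{Re}(cab)$) equals $D:=4\phi(c,b,a)^{2}+|[c,b,a]|^{2}$. The singular locus of $\{\SS=0\}$ is contained in the crossing $\{2\gamma+s=0\}$, on which $\SS$ restricts to $-D$ and $\nabla\SS$ to $-\nabla D$; hence $\mathrm{Sing}(\SS)=\{2\gamma+s=0\}\cap\mathrm{Sing}(V(D))$. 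I thereby reduce the statement to showing that the sextic $D=4\phi^{2}+|[c,b,a]|^{2}$ on the $24$-dimensional $(a,b,c)$-space has singular locus of codimension $3$; the single extra condition $2\gamma+s=0$, which involves the $\lambda_{i}$, then cuts out a codimension-$4$ locus in $\PP(\J_{3}(\O))$. Finally, since $4>2$, the sextic $\SS$ can have neither a repeated irreducible factor (which would force a codimension-$1$ singular locus) nor two coprime factors (codimension $2$), so $\SS$ is irreducible.

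For the symmetry statements I would use the decomposition $\J_{3}(\O)=(\mathbf{3},\mathbf{1})\oplus(\overline{\mathbf{3}}\otimes\mathbf{8})$ under $\mathrm{SL}_{3}(\C)\times\mathrm{Spin}(7)$, the diagonal $(\lambda_{1},\lambda_{2},\l3)$ being the first summand and the off-diagonal triple $(a,b,c)$ the second, with $\mathrm{Spin}(7)$ acting on each octonion through its $8$-dimensional spin representation. One checks that this action preserves $\mathrm{Det}_{\O}$, $\mathrm{Re}(cab)$, $\phi(c,b,a)$ and $|[c,b,a]|^{2}$ up to a common character, hence preserves $\SS$; lifting it to the source and target $\O^{\oplus 3}\cong\overline{\mathbf{3}}\otimes\mathbf{8}$ of $M_{\O}$ and checking that $M_{\O}$ intertwines them up to that character yields the $\mathrm{SL}_{3}(\C)\times\mathrm{SO}_{7}(\C)$-equivariance of the cokernel and the inclusion $\mathrm{SL}_{3}\times\mathrm{SO}_{7}\hookrightarrow\mathrm{Aut}^{0}(\SS)$. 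For the reverse inclusion, and hence the equality up to a finite quotient, I would compute the Lie algebra $\{X\in\mathfrak{gl}(\J_{3}(\O)):X\cdot\SS\in\C\,\SS\}$ of infinitesimal symmetries and show its dimension equals $\dim\mathfrak{sl}_{3}+\dim\mathfrak{so}_{7}=29$, exploiting that any such $X$ must preserve $\mathrm{Sing}(\SS)$ as well as the tensors $\mathrm{Det}_{\O},\phi,|[c,b,a]|^{2}$ recovered above; establishing this upper bound is the second substantial point of the argument.
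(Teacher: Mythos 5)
Your skeleton for part (2) coincides with the paper's: your iterated Schur complements are exactly the paper's factorization $M_{\O}=R_1R_2\,D\,{}^{t}R_2\,{}^{t}R_1$, which reduces $\det(M_{\O})$ to $\det\big(\gamma I_8+K+{}^tK\big)$, and your derivation of $\det(M_{\O})=P^4$ from the spectral lemma is correct and even slightly cleaner than the paper's degree count $24=4\times\deg(\SS)$. But the spectral lemma -- that $K+{}^tK$ has generically two eigenvalues, each of multiplicity $4$ -- is precisely the heart of Ogievetski\^i's theorem, and you leave it unproven: neither the Moufang expansion of $K^2$ nor the appeal to ``even multiplicities from the Clifford structure'' is carried out, and the latter is a restatement of the claim rather than an argument. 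The paper's missing idea here is concrete: choose a quaternion subalgebra $M\subset\O$ containing $a$ and $c$, split $\O=M\oplus M.e$, write $b=b_0+b_1.e$, $x=x_0+x_1.e$, and use the Cayley--Dickson relations $u.(v.e)=(v.u).e$, $(u.e).(v.e)=-\overline{v}.u$, etc., to turn the eigenvalue equation into a $2\times 2$ block system over $M$ whose determinant collapses (because the relevant block is scalar) to the quadratic $(2\mathrm{Re}(cab)-\k)(2\mathrm{Re}(cba)-\k)-|[c,b,a]|^2$. Without this (or a genuine substitute), part (2) and your formula $\det(M_{\O})=\SS^4$ are not established.

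The second genuine gap is that your symmetry argument would fail as stated. You propose to let $\mathrm{Spin}_7$ act \emph{diagonally} on $(a,b,c)$ through its $8$-dimensional spin representation $S$ and to check that $\mathrm{Det}_{\O}$, $\mathrm{Re}(cab)$, $\phi$ and $|[c,b,a]|^2$ are \emph{separately} semi-invariant. They are not, for representation-theoretic reasons: $S\otimes S\simeq \Lambda^0\oplus\Lambda^1\oplus\Lambda^2\oplus\Lambda^3$ of the $7$-dimensional representation contains no copy of $S$, so $(S^{\otimes 3})^{\mathrm{Spin}_7}=0$ and no nonzero trilinear form such as $\mathrm{Re}(cab)$ can be invariant; the same failure occurs for the diagonal vector action of $\mathrm{SO}_7\subset\mathrm{SO}_8$ (stabilizer of $1$), since the $7$-dimensional representation has no invariant trilinear form (the cross product on $\mathrm{Im}\,\O$ is only $\mathrm{G}_2$-invariant) -- e.g.\ $\mathrm{Re}(cab)\neq 0$ at $(a,b,c)=(i,j,ij)$ while every invariant built from $\mathrm{Re}$ and the norm form vanishes there. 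The paper's mechanism is different and essential: $T_1\in\mathrm{SO}_7$ acts on $\J_{3}(\O)$ by the vector representation, while its triality partner $T_2$ (the spin representation, with $T_2(uv)=T_1(u)T_2(v)$, equivalently $L_{T_1(z)}=T_2L_zT_2^{-1}$) acts on the kernel space $\C^{24}$; this shows the \emph{locus of singular matrices} -- hence the irreducible polynomial $\SS$ up to scalar, but not its individual terms -- is preserved, and gives the intertwining $T_1.M_{\O}=\mathrm{diag}(T_2,T_2,T_2)\,M_{\O}\,\mathrm{diag}(T_2,T_2,T_2)^{-1}$ needed for equivariance of the cokernel. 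Finally, in part (1) your identity $\mathrm{Sing}(\SS)=\{2\gamma+s=0\}\cap\mathrm{Sing}(V(D))$ is not justified: writing $4\SS=u^2-4D$ with $D$ independent of the $\ll_i$, the vanishing of the $\ll_i$-partials only gives $u\,\partial u/\partial\ll_i=0$, so there is a second branch $\{\partial\mathrm{Det}_{\O}/\partial\ll_i=0,\ i=1,2,3\}$ with $u\neq 0$ that you never exclude; and both the codimension-$3$ claim for $\mathrm{Sing}(V(D))$ and the bound $\dim\mathfrak{aut}(\SS)\leq 29$ (which you propose to extract from the unproven -- and, by the above, doubtful -- recoverability of the tensors $\mathrm{Det}_{\O},\phi,|[c,b,a]|^2$ from $\SS$) are left open, whereas the paper establishes both by explicit \textit{Macaulay2} computations: the singular-locus codimension directly, and the Lie-algebra bound via the generic rank ($\geq 133$) of the differential of the restriction-to-$6$-planes map.
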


The sextic in Theorem \ref{theo1} has been first found out in \cite{O} and was then rediscovered later in \cite{DM} with computer aid. We call it the Ogievetski\^i-Dray-Manogue sextic and denote it $\mathrm{S}_{\mathrm{ODM}}$. Theorem \ref{theo1} will be proved in section in the setting of the octonionic eigenvalue problem.
As far as I know, the objects introduced in Theorem \ref{theo1} have not been yet investigated from the algebraic geometry perspective and they certainly deserve to be studied in details. They seem however not directly related to the geometry of the Cartan cubic and the tautological bundles that might exist on it. Our second main result relates to this more classical object. We denote by $N_{\O}$ the $24 \times 24$ matrix with entries in $\mathbb{C}[x_1, \ldots, x_{27}]$ given in $8 \times 8$ blocks by:
\[N_{\O} = \begin{pmatrix} \lambda_1.I_8 & R_{c} &  {}^{t} L_{b} \\ {}^{t} R_{c} & \lambda_2.I_8 & L_{a} \\ L_{b} & {}^{t}L_{a} & \lambda_1.I_8 \end{pmatrix},\]
where $R_c$ is the matrix of right multiplication by $c \in \O$.

\begin{theo*} \label{theo2}
\begin{enumerate}
\item Let $E_{\O}$ be the cokernel of
$$ \O^{\oplus 3} \otimes_{\C} \OO_{\mathbb{P}(\J_{3}(\O))}(-1) \stackrel{N_{\O}}\barrow \O^{\oplus 3} \otimes_{\C} \OO_{\mathbb{P}(\J_{3}(\O))}.$$
The sheaf $E_{\O}$ is supported on a degree $9$ reducible hypersurface in $\mathbb{P}(\J_{3}(\O))$, which is the union of hypersurface projectively isomorphic to $\CC_{\O}$ (the Cartan cubic) and a sextic hypersurface, which we denote $\mathfrak{S}$, and whose equation is:
\begin{equation*}
\begin{split}
\mathfrak{S} & = \left(\ll_1|a|^2 + \ll_2|b|^2 + \ll_3|c|^2 - \ll_1\ll_2\ll_3 \right)^2 \\
                              & \hspace{0.5cm} - 4\left(\ll_1|a|^2 + \ll_2|b|^2 + \ll_3|c|^2 - \ll_1\ll_2\ll_3 \right)\mathrm{Re}(c)\mathrm{Re}(ab) \\
                              & \hspace{0.5cm} + 4\left(|b|^2|a|^2\mathrm{Re}(c)^2 + |c|^2 \mathrm{Re}(ab)^2 - |a|^2|b|^2|c|^2 \right),
\end{split}
\end{equation*}
where $|z|^2 = z\overline{z}$  and $\mathrm{Re}(z) = \frac{z + \overline{z}}{2}$.

\item The cubic hypersurface is $\langle \mathrm{Spin}_7(\C), \mathrm{SL}_3(\C) \rangle$-invariant, where $ \langle \mathrm{Spin}_7(\C), \mathrm{SL}_3(\C) \rangle$ is the subgroup of $\mathrm{SL}_ {24}$ generated by $\mathrm{Spin}_7(\C)$ and $\mathrm{SL}_3(\C)$. The restriction of $E_{\O}$ to the cubic has generically rank $4$ and is $\langle \mathrm{Spin}_7(\C), \mathrm{SL}_3(\C) \rangle$-equivariant.
\item The hypersurface $\mathfrak{S}$ is equally $\langle \mathrm{Spin}_7(\C), \mathrm{SL}_3(\C) \rangle$-invariant. The restriction of $E_{\O}$ to $\mathfrak{S}$ has generically rank $2$ and is $\langle \mathrm{Spin}_7(\C), \mathrm{SL}_3(\C) \rangle$-equivariant.
\end{enumerate}
\end{theo*}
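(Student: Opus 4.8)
The support of $E_\O$ is the degeneracy locus $\{\det(N_\O) = 0\}$, so the first assertion amounts to a factorisation of the homogeneous degree $24$ polynomial $\det(N_\O)$. The plan is to compute this determinant by successive Schur complements on the $8\times 8$ diagonal blocks, which is legitimate here despite the non-associativity of $\O$ because only the \emph{composition} identities are needed: ${}^tL_x = L_{\overline{x}}$, ${}^tR_x = R_{\overline{x}}$, and $L_xL_{\overline{x}} = R_xR_{\overline{x}} = |x|^2\,\mathrm{Id}$. Eliminating first the $(3,3)$ and then the $(2,2)$ block, all the auxiliary scalar factors cancel and one is left with the clean identity
\begin{equation*}
\det(N_\O) = \det\big(\kappa\,\mathrm{Id}_8 + T\big), \qquad \kappa = \lambda_1\lambda_2\l3 - \lambda_1|a|^2 - \lambda_2|b|^2 - \l3|c|^2,
\end{equation*}
where $T = R_cL_aL_b + L_{\overline{b}}L_{\overline{a}}R_{\overline{c}}$ is the self-adjoint operator $T(x) = (a(bx))c + \overline{b}(\overline{a}(x\overline{c}))$. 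Note that $\kappa$ is exactly $\det_\O - 2\mathrm{Re}(cab)$, so the Cartan cubic will reappear once the spectrum of $T$ is understood; this isolates the non-associative heart of the problem in the single $8\times 8$ operator $T$.

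The crux is therefore the spectral decomposition of $T$. Writing $U = R_cL_aL_b$, a repeated use of the Moufang-type identities $(x\overline{c})c = |c|^2x$ and $L_{\overline{a}}L_a = |a|^2\mathrm{Id}$ gives $UU^* = U^*U = \rho^2\,\mathrm{Id}$ with $\rho^2 = |a|^2|b|^2|c|^2$, so that $U/\rho$ is (complex) orthogonal and $T = U + \rho^2U^{-1}$. Consequently the eigenvalues of $U$ occur in reciprocal pairs $\{u, \rho^2/u\}$ on which $T$ takes the single value $u + \rho^2/u$, so generically the eigenvalues of $T$ occur with even multiplicity. I would then show that for $a,b,c$ in general position $U/\rho$ has exactly one doubled reciprocal eigenvalue pair, so that $T$ has three eigenvalues $\tau_0, \tau_1, \tau_2$ of multiplicities $4, 2, 2$. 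This yields
\begin{equation*}
\det(N_\O) = (\kappa + \tau_0)^4\,\big[(\kappa+\tau_1)(\kappa+\tau_2)\big]^2,
\end{equation*}
and a trace computation identifies $\tau_0$ with $2\mathrm{Re}$ of a triality-twisted triple product, so that $\kappa + \tau_0$ is the Cartan cubic in twisted coordinates, projectively isomorphic to $\CC_\O$; meanwhile $\tau_1 + \tau_2 = 4\mathrm{Re}(c)\mathrm{Re}(ab)$ and $\tau_1\tau_2 = 4(|a|^2|b|^2\mathrm{Re}(c)^2 + |c|^2\mathrm{Re}(ab)^2 - |a|^2|b|^2|c|^2)$, whence $(\kappa+\tau_1)(\kappa+\tau_2)$ is exactly the sextic $\mathfrak{S}$. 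The degree bookkeeping $4\cdot 3 + 2\cdot 6 = 24$ confirms that no factor has been lost, which proves the first assertion.

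For the equivariance statements I would exhibit the action of $G = \langle \mathrm{Spin}_7(\C), \mathrm{SL}_3(\C)\rangle$ on $\J_3(\O)$ making $N_\O$ covariant: $\mathrm{SL}_3(\C)$ acts on the ``$3$''-index through the block structure, commuting with the octonionic ``$8$''-index, and $\mathrm{Spin}_7(\C)$ acts on the octonionic entries through the triality relations $g_1(xy) = g_2(x)g_3(y)$, the twist $R_c$ being precisely what forces $\mathrm{Spin}_7$ rather than the $\mathrm{SO}_7$ of Theorem \ref{theo1}. Covariance of $N_\O$ makes $\det(N_\O)$ a relative invariant, so the connected group $G$ permutes, hence fixes, the irreducible components of its zero locus; as these are the cubic and the sextic, both are $G$-invariant and $E_\O$ restricts to a $G$-equivariant sheaf on each. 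Finally the generic ranks are the generic coranks of $N_\O$ along each component: using the near-homogeneity of the components under $G$ I would reduce to a single representative in each and compute the kernel dimension of $N_\O$ directly, obtaining corank $4$ on the cubic and corank $2$ on $\mathfrak{S}$, in agreement with the multiplicities above.

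The main obstacle is the precise spectral statement for $T$, equivalently that $U/\rho$ has exactly one doubled reciprocal eigenvalue pair for generic $a,b,c$. The identity $UU^* = \rho^2\mathrm{Id}$ forces even multiplicities and reduces the question to $U$, but establishing the exact $(4,2,2)$ pattern, rather than the a priori possible $(8)$, $(6,2)$, $(4,4)$ or $(2,2,2,2)$, is where the alternativity of $\O$ must be exploited in a non-formal way: either by verifying that $T$ annihilates an explicit cubic and pinning the multiplicities with $\mathrm{tr}(T)$ and $\mathrm{tr}(T^2)$, or by a representation-theoretic analysis of $U$ as a $\mathrm{Spin}_7$-twisted product. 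Matching the resulting cubic factor with $\CC_\O$ up to the triality twist, and the quadratic factor with the stated closed form of $\mathfrak{S}$, is then a finite verification.
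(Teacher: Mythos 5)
Your reduction and your treatment of equivariance track the paper's own proof: the paper performs exactly this block elimination (its matrices $S_1,S_2$), arriving at $\det N_{\O}=\det\big(\kappa.I_8+L_aL_bR_c+{}^{t}R_c\,{}^{t}L_b\,{}^{t}L_a\big)$ with $\kappa=\lambda_1\lambda_2\lambda_3-\lambda_1|a|^2-\lambda_2|b|^2-\lambda_3|c|^2$, it realizes the $\mathrm{Spin}_7$-action by triality triples $(T_1,T_2,T_1)$ conjugating $N_{\O}$ by $\mathrm{diag}(T_1,T_1,T_2)$, and it deduces invariance of the two components from equivariance of the cokernel. The genuine gap is exactly the step you flag at the end as ``the main obstacle'': the $(4,2,2)$ spectral pattern, the identification of the multiplicity-four eigenvalue, and the symmetric functions $\tau_1+\tau_2$, $\tau_1\tau_2$. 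These are not a finite verification left for later; they are the entire content of assertion (1), since without them neither the cubic component nor the equation of $\mathfrak{S}$ is identified. The paper's way through is a quaternionic splitting: choose a quaternion subalgebra $M$ containing $a$ and $c$, write $\O=M\oplus M.e$, $b=b_0+b_1.e$, $x=x_0+x_1.e$, turn the eigenvalue equation $a.(b.(x.c))+(\overline{b}.(\overline{a}.x)).\overline{c}=\kappa.x$ into a $2\times 2$ system over $M$, and exhibit two explicit $2$-dimensional spaces of eigenvectors ($V_1$: $x_0=0$ and $\overline{x_1}.b_1.\overline{c}=c.\overline{x_1}.b_1$; $V_2$: $\overline{a}.x_0.c=\overline{c}.\overline{a}.x_0$ and $a.b_0.x_0.c=a.c.\overline{x_1}.b_1$) for the value $2\mathrm{Re}(\overline{c}ba)$. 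This shows that $2\mathrm{Re}(\overline{c}ba)$ is an eigenvalue of multiplicity at least $4$ identically in $a,b,c$, producing the fourth power of the cubic $\det_{\O}(A)-2\mathrm{Re}(cab)+2\mathrm{Re}(\overline{c}ba)=\det_{\O}(A')$, which is visibly projectively equivalent to $\CC_{\O}$. Note that even the paper does not derive the residual degree-$12$ factor by hand: that it equals $\mathfrak{S}^2$ with the stated equation is credited to a Bertini computation by Hauenstein. So the two routes you mention (trace computations, or a representation-theoretic analysis of $U$) would have to be carried out in full before your argument becomes a proof.

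That said, your orthogonality observation is a genuine addition not present in the paper. Since ${}^{t}L_x=L_{\overline{x}}$, ${}^{t}R_x=R_{\overline{x}}$ and $L_xL_{\overline{x}}=R_xR_{\overline{x}}=|x|^2I_8$, one indeed gets $U\,{}^{t}U={}^{t}U\,U=\rho^2 I_8$ with $\rho^2=|a|^2|b|^2|c|^2$, and $\det U=(|a|^2|b|^2|c|^2)^4$, so $U/\rho\in\mathrm{SO}_8(\C)$; hence the spectrum of $U$ is stable under $u\mapsto\rho^2/u$ and (generically, where $U$ is diagonalizable) every eigenvalue of $T=U+{}^{t}U$ has even multiplicity. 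This proves conceptually that the residual factor is a perfect square, a point the paper disposes of with ``one easily proves that $\mathfrak{T}=\mathfrak{S}^2$''. Your argument combined with the paper's eigenvector construction would thus yield a cleaner proof than either alone. Two smaller caveats: for the generic ranks, the order of vanishing of the determinant along a component only bounds the corank from above, so the claimed ranks $4$ and $2$ also need a lower bound (on the cubic it comes from the explicit eigenspaces $V_1\oplus V_2$; on $\mathfrak{S}$ from generic diagonalizability of $T$, or from the orbit computation you propose but do not carry out); and your operator $R_cL_aL_b$ differs from the paper's $L_aL_bR_c$ (a different elimination order), which is harmless for the orthogonality argument but does affect the explicit eigenvalue identifications in a non-associative algebra.
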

It should be noted that the explicit equation of $\mathfrak{S}$ has been first found by Jonathan Hauenstein using the \textit{Bertini} software. The embedding of $\mathrm{Spin}_7$ in $\mathrm{SL}_{24}$ we consider does not factor through $\mathrm{E}_6$. As a matter of fact, we haven't been able to identify the subgroup of $\mathrm{SL}_ {24}$ generated by $\mathrm{Spin}_7(\C)$ and $\mathrm{SL}_3(\C)$. We can however show the:

\begin{theo*} \label{theo3}
Let $G$ be the subgroup of $\mathrm{SL}_ {24}$ generated by $\mathrm{Spin}_7(\C)$ and $\mathrm{SL}_3(\C)$. The action of $\mathbb{C}^* \times G$ on $\J_{3}(\O)$ is prehomogeneous.
\end{theo*}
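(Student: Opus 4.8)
The plan is to establish prehomogeneity by showing that the orbit map $\C^*\times G \to \J_3(\O)$, $g \mapsto g\cdot A_0$, is submersive for some $A_0$, i.e. that its differential is surjective. Writing $\mathfrak{g}\subset\mathfrak{sl}_{24}$ for the Lie algebra of $G$ — the Lie subalgebra generated by $\mathfrak{spin}_7$ and $\mathfrak{sl}_3$ — the image of this differential at the identity is $\C\,A_0+\mathfrak{g}\cdot A_0$, the first summand being the Euler direction from $\C^*$. Since $\dim_\C\J_3(\O)=27$, it is enough to produce a single $A_0$ with $\dim_\C(\C\,A_0+\mathfrak{g}\cdot A_0)=27$: the orbit of such an $A_0$ is then Zariski open in the irreducible space $\J_3(\O)$, hence dense, which is exactly prehomogeneity.

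First I would make the infinitesimal action explicit. The inclusions $\mathrm{Spin}_7(\C),\mathrm{SL}_3(\C)\hookrightarrow\mathrm{SL}_{24}$ act on the column space $\C^{24}=\O^{\oplus 3}$, and the $G$-equivariance of $E_\O$ established in Theorem~\ref{theo2} reflects an equivariance $N_{g\cdot A}=g\,N_A\,{}^tg$, which transports the action to a linear action on $\J_3(\O)$; differentiating yields explicit formulas for $X\cdot A$ with $X\in\mathfrak{spin}_7$ or $X\in\mathfrak{sl}_3$. The diagonal part $\C^3\subset\J_3(\O)$ is immediate: since the spinor representation is orthogonal, $\mathrm{Spin}_7$ preserves each block $\ll_iI_8$ and so fixes the $\ll_i$, while the Cartan subalgebra of $\mathfrak{sl}_3$ together with the Euler field already sweep out all three diagonal directions. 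The entire difficulty is concentrated in the $24$-dimensional off-diagonal part $\O^{\oplus 3}$.

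I would then fix $A_0$ with $a,b,c$ generic octonions and generic $\ll_i$ and compute $V_0:=\C\,A_0+\mathfrak{spin}_7\cdot A_0+\mathfrak{sl}_3\cdot A_0$. As $\mathfrak{spin}_7$ fixes the diagonal, $\mathfrak{spin}_7\cdot A_0$ accounts for at most $21$ of the $24$ off-diagonal directions, so the crux is whether the six off-diagonal root vectors of $\mathfrak{sl}_3$ supply the remaining three independently. Here the essential mechanism comes into play: because $\O$ is non-associative, $\mathrm{Spin}_7$ acts through genuinely different $8$-dimensional representations on the three octonionic slots of $N_\O$ — one via $R_c$, the others via $L_a,L_b$ — so $\mathfrak{spin}_7$ and $\mathfrak{sl}_3$ do not commute in $\mathfrak{sl}_{24}$ and the $\mathfrak{sl}_3$-directions are not absorbed into $\mathfrak{spin}_7\cdot A_0$. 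If this already gives $\dim_\C V_0=27$ we are done; if a small codimension $r$ remains, I would close the gap with $r$ explicit bracket vectors $[X,Y]\cdot A_0=X\cdot(Y\cdot A_0)-Y\cdot(X\cdot A_0)$, $X\in\mathfrak{spin}_7$, $Y\in\mathfrak{sl}_3$.

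The main obstacle is precisely this rank computation carried out over the non-associative algebra $\O$: since $G$ cannot be identified with a classical group, there is no closed description of $\mathfrak{g}$, and the tangent space must be assembled by hand from the explicit actions and, if necessary, finitely many brackets. The saving grace is that one never needs all of $\mathfrak{g}$ — exhibiting $27$ linearly independent tangent vectors at one explicit $A_0$ suffices, reducing the theorem to a finite (if laborious) octonionic rank check, of the same computational nature as the determination of $\mathfrak{S}$. Once the differential is surjective the orbit of $A_0$ is Zariski open, and I would conclude by noting that its complement is a proper closed $G$-invariant subset, consistent with being contained in the degree-$9$ invariant locus $\{\mathrm{Det}_\O\cdot\mathfrak{S}=0\}$ furnished by Theorem~\ref{theo2} — exactly the singular set one expects of this prehomogeneous vector space.
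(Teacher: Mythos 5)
Your overall strategy is legitimate: for a connected algebraic group acting linearly in characteristic $0$, exhibiting one point $A_0$ at which the differential of the orbit map is surjective does prove the orbit is open, hence prehomogeneity. But your proposal never actually performs the one computation that carries the entire content of the theorem. Everything is deferred to the final rank check: you write that if $\C A_0+\mathfrak{spin}_7\cdot A_0+\mathfrak{sl}_3\cdot A_0$ is $27$-dimensional ``we are done,'' and ``if a small codimension $r$ remains'' you ``would close the gap'' with bracket vectors $[X,Y]\cdot A_0$. Neither branch is established: you do not produce $27$ independent tangent vectors, you do not determine $r$, and you do not exhibit the brackets. Since the whole difficulty of the theorem is exactly this point --- the paper itself stresses that the group $G$ generated by the twisted $\mathrm{Spin}_7$ and $\mathrm{SL}_3$ cannot be identified, and that the analogous untwisted count fails badly (generic orbit of codimension at least $6$, by Remark 2.2.2) --- the heuristic appeal to non-commutativity of $\mathfrak{spin}_7$ and $\mathfrak{sl}_3$ inside $\mathfrak{sl}_{24}$ is not a substitute for the verification. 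As written, the proposal is a plan for a proof, with the decisive step left open.

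For contrast, the paper avoids any infinitesimal computation and any need to understand $\mathfrak{g}$. It proves prehomogeneity by an explicit normal-form algorithm: using a transitivity lemma for $\mathrm{Spin}_7$ (acting on $S^6\times\O$ via triality, Lemma 3.3.3) it rotates the octonionic entries of a generic $A$ into small complex or quaternionic spans, then alternates with explicit elementary matrices in $\mathrm{SL}_3$ (transpose-conjugation) to kill components, and after finitely many such steps reduces $A$ to a complex symmetric matrix and finally, using $\mathrm{GL}_3=\C^*\cdot\mathrm{SL}_3$, to the identity $I_3$. This shows directly that the generic $\C^*\times G$-orbit is the orbit of $I_3$, which is open. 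If you want to rescue your approach, you must either carry out the tangent-space rank computation at an explicit $A_0$ (e.g.\ symbolically, in the spirit of the paper's Appendix A computation for $\mathfrak{aut}(\SS)$), or replace it by a group-level normalization argument of the kind the paper gives.
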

Theorem \ref{theo3} is slightly surprising as the subgroup $G$ of $\mathrm{SL}_{24}$ generated by $\mathrm{Spin}_7(\C)$ and $\mathrm{SL}_3(\C)$ is \textbf{not} $\mathrm{E}_6$ (and is even not a subgroup of $\mathrm{E}_6$). We will prove this fact in details in section 3.3, but less us have a glimpse at the argument. Any element of $\J_{3}(\O)$ can be diagonalized under the action of $\mathrm{E}_6$. Hence, any vector of $\J_{3}(\O)$ can be transformed, under the action of $\mathrm{E}_6$, to a vector whose corresponding $24*24$ matrix has rank $0,8,16$ or $24$. On the other hand, the action of $G$ on $\J_{3}(\O)$ preserves the rank. Since there are vectors in $\J_{3}(\O)$ whose corresponding $24*24$ matrix have rank $22$, we conclude that $G$ is not $\mathrm{E}_6$. The prehomogeneous space $(\J_{3}(\O), \mathbb{C}^* \times G)$ does not appear in the list of Kimura-Sato \cite{kimura}. This is not surprising as it is most likely not irreducible as a prohomogeneous space. It thus provides an interesting example of a non-irreducible prehomogeneous vector space which does not split as a sum of lower dimensional prehomogeneous vector spaces.

\bigskip

We prove Theorems \ref{theo2} and \ref{theo3} in the second section of this paper. There, we introduce a variant of the octonionic eigenvalue problem, which we call \textit{the twisted octonionic eigenvalue problem}. We deduce the aforementioned results from both algebraic and geometric features of this twisted problem. We note that the geometry of the twisted octonionic eigenvalue problem differs strongly from that of the corresponding untwisted problem. Indeed, one easily notices that the action of $\mathrm{GL}_3 \times \mathrm{SO}_7$ on $\J_{3}(\O)$ is far from being homogeneous (the generic orbit has codimension at least $6$ in $\J_{3}(\O)$). Theorem \ref{theo3} thus shows that the twisted problem is eminently more symmetric than the untwisted problem. 

\bigskip

The sheaf $E_{\O}$ appearing in Theorem \ref{theo2} may be considered as a quasi-tautological sheaf on $\CC_{\O}$ as it is the cokernel of a matrix whose tautological interpretation is clear. However, the description of $E_{\O}|_{A}$ for generic $A \in \CC_{\O}$ is not completely obvious. Furthermore, the restriction of $E_{\O}$ to the smooth locus of $\CC_{\O}$ is not a vector bundle, so that $E_{\O}$ is not the Fourier-Mukai transform, by the conormal diagram, of a vector bundle on $\mathbb{OP}^2$. One can however expect that it is the Fourier-Mukai transform of a rank $4$ coherent sheaf on $\mathbb{OP}^2$ which could be conceived as quasi-tautological sheaf on $\mathbb{OP}^2$. The tautological interpretation of this sheaf is again not completely transparent. Further investigations should reveal many more fascinating features associated to the twisted octonionic eigenvalue problem. In a forthcoming paper with Jonathan Hauenstein \cite{Jonme}, we will we study in more details the geometry and singularities of $\SS$ and $\mathfrak{S}$.

\bigskip

\noindent \textbf{Acknowledgements :} I am very grateful to Jonathan Hauenstein for the many discussions we have had on the geometry of $\SS$ and $\mathfrak{S}$ and especially for providing me with the explicit equation of $\mathfrak{S}$ using the software \textit{Bertini}. Many thanks to Oleg Ogievetski\^i for sharing with me an English translation of his paper \cite{O}. I would also like to acknowledge discussions I have had with Robert Bryant and I thank him for the clean argument he communicated to me showing that the subgroup of $\mathrm{E}_6$ generated by $\mathrm{Spin_7}$ and $\mathrm{SL}_3$ is $\mathrm{E}_6$ itself.

\end{section}

\begin{section}{The eigenvalue problem for $\J_{3}(\O)$ and the Ogievetski\^i-Dray-Manogue sextic}

\begin{subsection}{The octonionic eigenvalue problem}

Let $\A$ be $\mathbb{C}$, $\C \otimes_{\R} \C$, $\mathbb{H}$ or $\O$ (the algebra $\mathbb{H}$ and $\O$ being the \textit{complexified} quaternions and octonions). The left (complex) eigenvalue problem for $\J_{3}(\A)$ is the following. Given $A = \begin{pmatrix} \lambda_1 & c & \overline{b} \\ \overline{c} & \lambda_2 & a \\ b & \overline{a} & \l3 \end{pmatrix} \in \J_{3}(\A)$, what are the $\mu \in \mathbb{C}$ such that there exists a non zero vector $\begin{pmatrix} x \\ y \\ z \end{pmatrix} \in \A^{3}$ with $A \begin{pmatrix} x \\ y \\ z \end{pmatrix} = \mu . \begin{pmatrix} x \\ y \\ z \end{pmatrix}$? So that there is no confusion due to the non-commutativity (and possibly non associativity) of $\A$, the equality $A \begin{pmatrix} x \\ y \\ z \end{pmatrix} = \mu . \begin{pmatrix} x \\ y \\ z \end{pmatrix}$ is meant to be:
\begin{equation*}
\left\{
\begin{split}
\lambda_1 x + c y + \overline{a}_2 z & \ = \mu x \\
 \overline{c} x + \lambda_2 y +  a z & \ = \mu y \\
b x + \overline{a} y + \lambda_2 z & \ = \mu z. \\
\end{split}
\right.
\end{equation*}
Let us say that $A \in \J_{3}(\A)$ is $\textit{singular}$ or $\textit{degenerate}$ if $0$ is an eigenvalue of $A$. In case $\A$ is associative, the following proposition is well-known. We recall a proof for the convenience of the reader.

\begin{prop}
Let $\A$ be an associative composition algebra which is finite dimensional over $\C$. Let $\begin{pmatrix} \lambda_1 & c & \overline{b} \\ \overline{c} & \lambda_2 & a \\ b & \overline{a} & \l3 \end{pmatrix} \in \J_{3}(\A)$ and $\mu \in \mathbb{C}$. Then, there  exists a non zero vector $\begin{pmatrix} x \\ y \\ z \end{pmatrix} \in \A^{3}$ with $A \begin{pmatrix} x \\ y \\ z \end{pmatrix} = \mu . \begin{pmatrix} x \\ y \\ z \end{pmatrix}$ if and only if $\det_{\A}(A - \mu.I_3)=0$.
\end{prop}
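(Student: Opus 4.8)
The plan is to linearize the left eigenvalue equation over $\C$ and then invoke the polynomial identity (\ref{rank}) relating $\det_{\mathcal{M}_{3n_{\A},3n_{\A}}}(M_{\A})$ to $\mathrm{Det}_{\A}$. First I would reduce to the case $\mu = 0$. Since $\mu \in \C$ lies in the center of $\A$, the shifted matrix $A - \mu.I_3$ again belongs to $\J_{3}(\A)$ (translating the diagonal by a complex scalar preserves the Hermitian shape), the eigenvalue equation $A v = \mu.v$ is equivalent to $(A - \mu.I_3)v = 0$, and the quantity to be tested is exactly $\det_{\A}(A - \mu.I_3)$. Thus it suffices to prove, for an arbitrary $B \in \J_{3}(\A)$, that a nonzero $v \in \A^3$ with $B v = 0$ exists if and only if $\det_{\A}(B) = 0$, and then to apply this to $B = A - \mu.I_3$.

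Second, I would observe that the map $v \mapsto B v$, defined entrywise by left multiplication, is a $\C$-linear endomorphism of $\A^3$: $\C$-linearity follows from the centrality of $\C$ in $\A$, so that each term $B_{ij}v_j$ is $\C$-linear in $v_j$. Fixing the basis $e_1,\ldots,e_{n_{\A}}$ of $\A$ in each of the three slots identifies $\A^3$ with $\C^{3n_{\A}}$, and by the very definition of $L$ the operator $v \mapsto B v$ is represented in this basis by the complex matrix $L(B)$, i.e. by $M_{\A}$ evaluated at $B$. Consequently, the existence of a nonzero $v \in \A^3$ in the kernel of $B$ is equivalent to the non-injectivity of $L(B)$ acting on $\C^{3n_{\A}}$, that is, to $\det_{\mathcal{M}_{3n_{\A},3n_{\A}}}(L(B)) = 0$.

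Third, I would conclude with the relation (\ref{rank}), which here reads $\det_{\mathcal{M}_{3n_{\A},3n_{\A}}}(L(B)) = \big(\det_{\A}(B)\big)^{n_{\A}}$ and holds precisely because $\A$ is associative, so that $L$ is an algebra morphism and the matrix factorization (\ref{matfact2}) is available. This gives $\det_{\mathcal{M}_{3n_{\A},3n_{\A}}}(L(B)) = 0 \Longleftrightarrow \det_{\A}(B) = 0$, and combining the three steps yields the proposition. For the implication $\det_{\A}(B) = 0 \Rightarrow$ existence of an eigenvector one may also argue more explicitly through the factorization (\ref{matfact}): any nonzero column of $\mathrm{Com}(B)$ lies in the kernel of $B$, since $B \times \mathrm{Com}(B) = \det_{\A}(B).I_3 = 0$. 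The $L$-embedding argument above is preferable because it also covers the degenerate case $\mathrm{Com}(B) = 0$ uniformly, and it handles both implications at once.

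The main obstacle, and really the only place associativity enters essentially, is the third step: both the identity (\ref{rank}) and the identification of $v \mapsto B v$ with multiplication by the complex matrix $L(B)$ rest on $L$ being an algebra homomorphism, which fails for $\A = \O$. I would therefore phrase the first two steps so that they use only the centrality of $\C$ and the bare definition of $L$, isolating associativity to the single invocation of (\ref{rank}); this simultaneously makes transparent why neither the statement nor this proof can survive the passage to the octonions, in accordance with the obstructions discussed in the introduction.
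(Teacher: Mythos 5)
Your proof is correct, but it takes a genuinely different route from the paper's. The paper never leaves the $3\times 3$ level: it works directly over $\A$ with the comatrix factorization (\ref{matfact}), obtaining the implication $\det_{\A}(A-\mu.I_3)=0 \Rightarrow$ (eigenvector exists) from the columns of $\mathrm{Com}(A-\mu.I_3)$, with an explicit eigenvector $(\lambda_2-\mu,\,-\overline{c},\,0)$ exhibited in the degenerate case $\mathrm{Com}(A-\mu.I_3)=0$ (after assuming, say, $c \neq 0$), and the reverse implication by left-multiplying $(A-\mu.I_3)\times B=0$ by $\mathrm{Com}(A-\mu.I_3)$ and reassociating --- associativity enters both through (\ref{matfact}) itself and through this reassociation. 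You instead linearize over $\C$ via the regular representation $L$ and quote the identity (\ref{rank}), so that both implications and all degenerate cases are handled uniformly, with associativity quarantined in the single citation of (\ref{rank}); this has the real merit of making transparent exactly why the argument cannot survive the passage to $\O$. What it costs is self-containedness: (\ref{rank}) rests on the irreducibility of $\mathrm{Det}_{\A}$ (imported from the literature) and on the set-theoretic identification of $\{\det(M_{\A})=0\}$ with $\{\mathrm{Det}_{\A}=0\}$, and the delicate half of that identification --- showing that $\mathrm{Det}_{\A}$ vanishing at a point forces $\det(M_{\A})$ to vanish there even when $\mathrm{Com}(M_{\A})$ vanishes at that point --- is essentially the same degenerate case that the paper's proof dispatches by hand with its explicit eigenvector (alternatively, one can argue by density of the locus $\mathrm{Com}(M_{\A}) \neq 0$ inside the irreducible hypersurface $\{\mathrm{Det}_{\A}=0\}$). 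So your argument is not circular, since the paper establishes (\ref{rank}) in the introduction before and independently of the proposition, but be aware that it relocates the genuine content of the degenerate case into (\ref{rank}) rather than eliminating it; the paper's proof, by contrast, is elementary and self-contained and produces eigenvectors explicitly, at the price of a case analysis.
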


\begin{proof}
If $A = 0$ the result is obvious. We now assume that $A \neq 0$, for instance $c \neq 0$ (the non-vanishing of any other coefficient in $A$ is dealt with exactly in the same way). Assume that $\det_{\A}(A- \mu.I_3) = 0$. If $\mathrm{Com}(A - \mu.I_3) \neq 0$ then, the equation:
\[ (A-\mu.I_3) \times \mathrm{Com}(A-\mu.I_3) = \det_{\A}(A-\mu.I_3).I_3 = 0\]
shows that any non-zero column vector of $\mathrm{Com}(A-\mu.I_3)$ is an eigenvector for $A$ with respect to $\mu$. If $\mathrm{Com}(A-\mu.I_3) = 0$, then one easily checks that $\begin{pmatrix} \lambda_2-\mu \\ -\overline{c} \\ 0 \end{pmatrix}$ is a non-zero eigenvector for $A$ with respect to $\mu$.

\bigskip

\noindent Assume on the other side that there exists $\begin{pmatrix} x \\ y \\ z \end{pmatrix} \neq 0$ such that $(A- \mu.I_3) \begin{pmatrix} x \\ y \\ z \end{pmatrix} = 0$. Let $B = \begin{pmatrix} x & x &x \\ y&y&y \\ z&z&z \end{pmatrix}$. We have $(A-\mu.I_3) \times B = 0$ and since $\A$ is associative, we also have $\mathrm{Com}(A-\mu.I_3) \times (\A-\mu.I_3) \times B = 0$. But $\mathrm{Com}(A-\mu.I_3) \times (A-\mu.I_3) = \det_{\A}(A-\mu.I_3) . I_3$, so that we get:
\[ \det_{\A}(A-\mu.I_3) . B = 0.\]
The matrix $B$ is non-zero by hypothesis and $\det_{\A}(A -\mu.I_3) \in \C$ imply that $\det_{\A}(A - \mu.I_3)=0$.
\end{proof}

\noindent The algebra $\O$ being not associative, the above result does not apply in this case. Nevertheless, Ogievetski\^i found out a characteristic equation that determines the complex eigenvalues of $A \in \J_{3}(\O)$:

\begin{theo}[see \cite{O, DM}] \label{prop5}
\begin{enumerate}
\item Let $A = \begin{pmatrix} \lambda_1 & c & \overline{b} \\ \overline{c} & \lambda_2 & a \\ b & \overline{a} & \l3 \end{pmatrix} \in \J_{3}(\O)$. The matrix $A$ is singular if and only if the $24*24$ matrix $\begin{pmatrix} \lambda_1.I_8 & L_c & {}^{t} L_b \\ {}^{t} L_c & \lambda_2.I_8 & L_a \\ L_b & {}^{t} L_a & \l3.I_8 \end{pmatrix}$ is singular. 

\item The degeneracy locus of the matrix $\begin{pmatrix} \lambda_1.I_8 & L_c & {}^{t} L_b \\ {}^{t} L_c & \lambda_2.I_8 & L_a \\ L_b & {}^{t} L_a & \l3.I_8 \end{pmatrix}$ is given by the equation:
\[  (\det_{\O}(A))^2 -  4 \phi(c,b,a) \det_{\O}(A) - \Big|[c,b,a] \Big|^2  =0.\]
\end{enumerate}
\end{theo}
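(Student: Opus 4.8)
The plan is to treat (1) as a linear-algebra dictionary and to deduce (2) by reducing the singularity of the $24\times 24$ matrix $M_{\O}$, through two Schur complements, to a single $8\times 8$ determinant governed by the triple left-multiplication $y\mapsto c(a(by))$. In fact I will prove the sharper statement $\det M_{\O}=\SS^{4}$, which contains (2).

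\textbf{Part (1).} Fixing the basis $e_1,\dots,e_8$ identifies $\O^{\oplus 3}\simeq\C^{24}$, and left multiplication $x\mapsto L_x$ is $\C$-linear with ${}^{t}L_x=L_{\overline{x}}$ (the adjoint for the polarised norm form, since $\langle xu,v\rangle=\langle u,\overline{x}v\rangle$). Hence each of the three octonionic rows of $A\,{}^{t}(x,y,z)=0$ is, read off in coordinates, exactly one $8$-row block of $M_{\O}\,{}^{t}(\tilde x,\tilde y,\tilde z)=0$; e.g.\ $cy+\overline{b}z$ becomes $L_c\tilde y+{}^{t}L_b\tilde z$. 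As $(x,y,z)\mapsto(\tilde x,\tilde y,\tilde z)$ is a linear isomorphism, $A$ has a nonzero kernel over $\O$ iff $M_{\O}$ has a nonzero kernel over $\C$, which is (1).

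\textbf{Reduction for Part (2).} I would work on the dense open set where $\l3\neq 0$ and $\mu_2:=\lambda_2-|a|^2/\l3\neq 0$ (the resulting polynomial identity then holds everywhere). The one algebraic input is the norm relation $L_xL_{\overline{x}}=|x|^2I_8$, a consequence of Artin's theorem ($x(\overline{x}y)=(x\overline{x})y=|x|^2y$). A Schur complement on the block $\l3 I_8$, then on $\mu_2 I_8$, yields $\det M_{\O}=\l3^{8}\det\big(\mu_1\mu_2 I_8-UU^{t}\big)$ with $\mu_1=\lambda_1-|b|^2/\l3$ and $U=L_c-\tfrac1{\l3}L_{\overline{b}}L_{\overline{a}}$, the off-diagonal blocks simplifying via ${}^{t}L_bL_b=|b|^2I_8$ and $L_a{}^{t}L_a=|a|^2I_8$. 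Expanding $UU^{t}$ and collapsing $L_cL_{\overline{c}}$ and $L_{\overline{b}}L_{\overline{a}}L_aL_b$ by the same relation gives $UU^{t}=\big(|c|^2+|a|^2|b|^2/\l3^2\big)I_8-\tfrac1{\l3}\Sigma$, where $\Sigma:=W+{}^{t}W$ and $W:=L_cL_aL_b$. A direct check that $\l3\big(\mu_1\mu_2-|c|^2-|a|^2|b|^2/\l3^2\big)=\det_{\O}(A)-2\,\mathrm{Re}(cab)$ then collapses the two scalar factors and leaves the clean identity $\det M_{\O}=\det\big((\det_{\O}(A)-2\,\mathrm{Re}(cab))I_8+\Sigma\big)$.

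\textbf{The crux.} Everything now rests on two facts about $W=L_cL_aL_b$. First $\Sigma$ is symmetric and $W\,{}^{t}W=|a|^2|b|^2|c|^2 I_8$ (again by telescoping norm relations), so its spectrum is stable under $w\mapsto |a|^2|b|^2|c|^2/w$ and has even multiplicities. Second — and this is the heart of the matter — I claim that the trace is $\mathrm{tr}\,W=8\big(\mathrm{Re}(cab)-\phi(c,b,a)\big)$ and that the traceless part $\Sigma_0:=\Sigma-2\big(\mathrm{Re}(cab)-\phi(c,b,a)\big)I_8$ satisfies $\Sigma_0^{\,2}=\big(4\phi(c,b,a)^2+|[c,b,a]|^2\big)I_8$. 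Granting these, $\Sigma_0$ is a traceless $8\times 8$ symmetric operator squaring to a scalar, hence has eigenvalues $\pm\sqrt{4\phi^2+|[c,b,a]|^2}$ each with multiplicity $4$; writing $\phi=\phi(c,b,a)$ and using the displayed identity of the previous paragraph,
\[
\det M_{\O}=\det\big((\det_{\O}(A)-2\phi)I_8+\Sigma_0\big)=\big((\det_{\O}(A)-2\phi)^2-4\phi^2-|[c,b,a]|^2\big)^{4}=\SS^{4},
\]
since the shift by $2\,\mathrm{Re}(cab)$ is exactly absorbed by the scalar part of $\Sigma$. Thus the degeneracy locus of $M_{\O}$ is the sextic $\SS$.

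\textbf{Main obstacle.} The only genuinely non-associative computation is the pair of identities above, and the quadratic relation $\Sigma_0^{\,2}=(4\phi^2+|[c,b,a]|^2)I_8$ is where the difficulty concentrates. I expect to prove it by writing $L_xL_y=L_{xy}-[x,y,\cdot]$ to split each composition into a scalar multiplication and the alternating (hence skew-symmetric, purely imaginary) associator operator $[x,y,\cdot]$, then squaring and using the total antisymmetry of the associator together with the Moufang identities to show that every non-scalar contribution cancels, leaving the invariant $4\phi^2+|[c,b,a]|^2$; the trace formula isolating $\phi$ should follow from $\mathrm{tr}\big(L_c\,[a,b,\cdot]\big)=8\phi(c,b,a)$. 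As consistency checks, the $4$–$4$ splitting of the spectrum of $\Sigma_0$ matches simultaneously the exponent $4$ in $\det M_{\O}=\SS^{4}$, the generic rank $4$ of the cokernel recorded in Theorem~1, and the degree count $24=4\times 6$.
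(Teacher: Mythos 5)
Your part (1) and your Schur-complement reduction are correct, and they coincide with the first half of the paper's own proof: the paper's explicit factorization $M_{\O}=R_1R_2\,D\,{}^{t}R_2\,{}^{t}R_1$ is exactly your two Schur complements, and it lands on the same identity $\det M_{\O}=\det\big((\det_{\O}(A)-2\mathrm{Re}(cab))I_8+\Sigma\big)$, with the symmetrized triple product taken in the order $L_aL_bL_c$ rather than your $L_cL_aL_b$ (an immaterial difference). Moreover, the two identities you isolate as ``the crux'' --- the trace formula $\mathrm{tr}\,W=8\big(\mathrm{Re}(cab)-\phi(c,b,a)\big)=4\big(\mathrm{Re}(cab)+\mathrm{Re}(cba)\big)$ and the relation $\Sigma_0^{\,2}=\big(4\phi^2+|[c,b,a]|^2\big)I_8$ --- are in fact true, and together they do imply $\det M_{\O}=\SS^4$ exactly as you argue.

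The genuine gap is that these two identities constitute the entire non-associative content of the theorem, and you do not prove them: you only announce that you ``expect'' to prove them by writing $L_xL_y=L_{xy}-[x,y,\cdot\,]$ and invoking Moufang identities. Squaring $\Sigma_0$ after such a decomposition produces a large number of mixed terms (compositions of associator operators with left multiplications), and your sketch supplies no mechanism for why all non-scalar contributions cancel; establishing that cancellation is not easier than the original spectral problem, so as written the proposal reduces the theorem to unproven statements of essentially the same depth. The paper closes precisely this gap by a different and fully explicit device: by Hurwitz's theorem it chooses a quaternion subalgebra $M$ containing $a$ and $c$, splits $\O=M\oplus M.e$, $b=b_0+b_1.e$, $x=x_0+x_1.e$, and uses the Cayley--Dickson relations to convert the eigenvalue equation for the symmetrized triple product into a $2\times 2$ quaternionic block system whose off-diagonal blocks multiply to the scalar $|\overline{c}a-a\overline{c}|^2|b_1|^2$; its determinant therefore collapses to the quadratic $(2\mathrm{Re}(cab_0)-\kappa)(2\mathrm{Re}(cb_0a)-\kappa)-|\overline{c}a-a\overline{c}|^2|b_1|^2$, which is exactly the spectral statement you need (and, as a bonus, it yields your $4$--$4$ multiplicity split, since the characteristic polynomial is visibly the fourth power of this quadratic). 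To complete your argument you should either carry out the associator computation in full or, more economically, substitute this quaternionic-splitting computation for your two unproven claims; everything else in your proposal then goes through.
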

As we will build upon it when dealing with the twisted version of the octonionic eigenvalue problem, we recall the main steps of the proof from \cite{O}:

\begin{proof}
The first assertion in the Theorem is obvious and follows from the definitions. We prove the second assertion. For $a,b,c \in \O$ and $\ll_1, \ll_2, \ll_3 \in \mathbb{C}$ such that $|a|,|b|,|c|, \ll_1, \ll_2, \ll_3 \neq 0$ and $|c|^2\ll_1 - \dfrac{|c|^4}{\ll_2} \neq 0$, we define the following $24*24$ matrices in block form:
\[ R_1 = \begin{pmatrix} \frac{1}{|c|^2} L_c & 0_{8,8} & 0_{8,8} \\ 0_{8,8}& I_{8}& 0_{8,8} \\ 0_{8,8} & 0_{8,8}& \frac{1}{|a|^2} {}^{t} L_a \end{pmatrix} \ \ \textrm{and} \ \   R_2 = \begin{pmatrix} I_8 & \frac{|c|^2}{\ll_2} & 0_{8,8} \\ 0_{8,8}& I_{8}& 0_{8,8} \\ \frac{1}{\delta} B & \frac{|a|^2}{\ll_2}I_8 & I_8 \end{pmatrix},\]
where $B = L_aL_bL_c - \dfrac{|a|^2|c|^2}{\ll_2}.I_8$, $\delta = |c|^2\ll_1 - \dfrac{|c|^4}{\ll_2}$, $0_{8,8}$ is the zero $8*8$ matrix and $I_8$ the identity matrix of size $8$.
One easily checks that:

\[ \begin{pmatrix} \lambda_1.I_8 & L_c & {}^{t} L_b \\ {}^{t} L_c & \lambda_2.I_8 & L_a \\ L_b & {}^{t} L_a & \l3.I_8 \end{pmatrix} = R_1R_2 \begin{pmatrix} \delta.I_8 & 0_{8,8} & 0_{8,8} \\ 0_{8,8} & \ll_2.I_8 & 0_{8,8} \\ 0_{8,8} & 0_{8,8} & \nu.I_8 - \dfrac{1}{\delta} B{}^{t}B \end{pmatrix} {}^{t} R_2 {}^{t} R_1, \]
where $\nu = \ll_3|a|^2 - \frac{|a|^4}{\ll_2}$. Since $\det(R_1) =\dfrac{1}{|a|^8|c|^8}$ and $ \det(R_2) = 1$, we get:
\begin{equation*}
\begin{split}
\det \begin{pmatrix} \lambda_1.I_8 & L_c & {}^{t} L_b \\ {}^{t} L_c & \lambda_2.I_8 & L_a \\ L_b & {}^{t} L_a & \l3.I_8 \end{pmatrix} & = \dfrac{1}{|a|^{16} |c|^{16}} \det \left( \delta \nu \ll_2.I_8 - \ll_2.B{}^{t} B \right)\\
                                                                    & = \det \left((\ll_1 \ll_2 \ll_3 - \ll_1|a|^2 - \ll_2 |b|^2 - \ll_3 |c|^2).I_8 + (L_aL_bL_c + {}^{t}L_c {}^tL_b {}^{t} L_a) \right)\\
                                                                    & = \det( -( 2 \mathrm{Re}(cab) - \det_{\O}(A)).I_8 + (L_aL_bL_c + {}^{t}L_c {}^tL_b {}^{t} L_a)),
\end{split}
\end{equation*}
where $A = \begin{pmatrix} \lambda_1 & c & \overline{b} \\ \overline{c} & \lambda_2 & a \\ b & \overline{a} & \l3 \end{pmatrix}.$

The expression $\det(-(2 \mathrm{Re}(cab) - \det_{\O}(A)).I_8 + (L_aL_bL_c + {}^{t}L_c {}^tL_b {}^{t} L_a))$ has no denominators so that the above equality holds for all  $a,b,c \in \O$ and $\ll_1, \ll_2, \ll_3 \in \mathbb{C}$. We deduce the following:

\smallskip

\noindent The determinant of $\begin{pmatrix} \lambda_1.I_8 & L_c & {}^{t} L_b \\ {}^{t} L_c & \lambda_2.I_8 & L_a \\ L_b & {}^{t} L_a & \ll_3.I_8 \end{pmatrix}$ vanishes if and only if $ 2 \mathrm{Re}(cab) - \det_{\O}(A)$ is a (complex) eigenvalue of $(L_aL_bL_c + {}^{t}L_c {}^tL_b {}^{t} L_a)$. We shall now determine the (complex) eigenvalues of $(L_aL_bL_c + {}^{t}L_c {}^tL_b {}^{t} L_a)$. This means we are looking for the $\k \in \C$ such that:
\begin{equation} \label{Ogiet}
\exists x \in \O, \ x \neq 0 \ \ a.(b.(c.x)) + \overline{c}.(\overline{b}.(\overline{a}.x)) = \kappa.x
\end{equation}
Let us consider a four dimensional quaternionic subalgebra of $\O$ which contains $a$ and $c$ (its existence is granted by a Theorem of Hurwitz). Let $e \in \O$ be a vector orthogonal to $M$ such that $\O = M \oplus M.e$. We write $b = b_0 + b_1.e$ and $x = x_0 + x_1.e$, with $b_0,b_1,x_0,x_1 \in M$. Using the relations:
$$ u.(v.e) = (v.u).e, \ (u.e).(v.e) = -\overline{v}.u, \ u.e = e. \overline{u} \ \textrm{and} \ (u.e).v = (u. \overline{v}).e,$$
valid for all $(u,v) \in M^2$, we find that equation \ref{Ogiet} is equivalent to the system:
\begin{equation*}
\left\{
\begin{split}
2 \mathrm{Re}(c.a.b_0).x_0 + (\overline{c}.a - a.\overline{c}).\overline{x_1}.b_1 & = \kappa.x_0 \\
(\overline{a}.c - c.\overline{a}).x_0.\overline{b_1} + 2\mathrm{Re}(c.b_0.a).\overline{x_1} & = \kappa.\overline{x_1}.
\end{split}
\right.
\end{equation*}
This system can be written in matrix form:
\[ \begin{pmatrix} (2\mathrm{Re}(c.a.b_0) - \kappa).I_4 & ({}^{t} L_c L_a - L_a{}^{t} L_c) R_{b_1} \\  ({}^{t} L_a L_c - L_c{}^{t} L_a) {}^{t}R_{b_1} & (2\mathrm{Re}(c.b_0.a) - \kappa).I_4 \end{pmatrix} \times \begin{pmatrix} x_0 \\ \overline{x_1} \end{pmatrix} = \begin{pmatrix} 0_{4,1} \\ 0_{4,1} \end{pmatrix}. \]
The existence of such an $\begin{pmatrix} x_0 \\ \overline{x_1} \end{pmatrix} \neq 0$ is then equivalent to:
$$ \det \begin{pmatrix} (2\mathrm{Re}(c.a.b_0) - \kappa).I_4 & ({}^{t} L_c L_a - L_a{}^{t} L_c) R_{b_1} \\  ({}^{t} L_a L_c - L_c{}^{t} L_a) {}^{t}R_{b_1} & (2\mathrm{Re}(c.b_0.a) - \kappa).I_4 \end{pmatrix} = 0.$$
The bottom right entry of the above block-matrix commutes with every matrix, so that the block-computation of the determinant gives:
\begin{equation*}
\begin{split}
\det & \begin{pmatrix} (2\mathrm{Re}(c.a.b_0) - \kappa).I_4 & ({}^{t} L_c L_a - L_a{}^{t} L_c) R_{b_1} \\  ({}^{t} L_a L_c - L_c{}^{t} L_a) {}^{t}R_{b_1} & (2\mathrm{Re}(c.b_0.a) - \kappa).I_4 \end{pmatrix} \\
& = \det( (2\mathrm{Re}(c.a.b_0) - \kappa)(2\mathrm{Re}(c.b_0.a) - \kappa).I_4 -  (({}^{t} L_c L_a - L_a{}^{t} L_c) R_{b_1}) \times (({}^{t} L_a L_c - L_c{}^{t} L_a) {}^{t}R_{b_1}))\\
& = \det( \left((2\mathrm{Re}(c.a.b_0) - \kappa)(2\mathrm{Re}(c.b_0.a) - \kappa) - |\overline{c}.a-a.\overline{c}|^2.|b_1|^2 \right).I_4).
\end{split}
\end{equation*}
We deduce that:
$$ \textrm{ $\kappa$ is an eigenvalue of $(L_aL_bL_c + {}^{t}L_c {}^tL_b {}^{t} L_a)\ \Leftrightarrow \ (2\mathrm{Re}(c.a.b_0) - \kappa)(2\mathrm{Re}(c.b_0.a) - \kappa) - |\overline{c}.a-a.\overline{c}|^2.|b_1|^2 = 0$.}$$
We note that $b = b_0 + b_1.e$ and $e \perp M$, hence:
\[ \mathrm{Re}(c.a.b_0) = \mathrm{Re}(c.a.b), \ \textrm{Re}(c.b_0.a) = \mathrm{Re}(c.b.a) \ \textrm{and} \ |\overline{c}.a-a.\overline{c}|^2.|b_1|^2 = |[a,b_1.e,c]|^2 = |[c,b,a]|^2.\]
We furthermore recall that $2\mathrm{Re}(c.a.b) - \det_{\O}(A)$ is an eigenvalue of $(L_aL_bL_c + {}^{t}L_c {}^tL_b {}^{t} L_a)$ if and only if the matrix $\begin{pmatrix} \lambda_1.I_8 & L_c & {}^{t} L_b \\ {}^{t} L_c & \lambda_2.I_8 & L_a \\ L_b & {}^{t} L_a & \ll_3.I_8 \end{pmatrix}$ is singular. We finally find that the degeneracy locus of the matrix $\begin{pmatrix} \lambda_1.I_8 & L_c & {}^{t} L_b \\ {}^{t} L_c & \lambda_2.I_8 & L_a \\ L_b & {}^{t} L_a & \ll_3.I_8 \end{pmatrix}$ is defined by the equation:
\begin{equation*}
\begin{split}
 & \ (2\mathrm{Re}(c.a.b) - \kappa)(2\mathrm{Re}(c.b.a) - \kappa) - |[c,b,a]|^2 = 0 \\
\Leftrightarrow \ & \ \mathrm{Det}_{\O}(2\mathrm{Re}(c.a.b) - 2\mathrm{Re}(c.b.a) + \mathrm{Det}_{\O}) - |[c,b,a]|^2 = 0\\
\Leftrightarrow \ & \ \mathrm{Det}_{\O}^2 + 2\mathrm{Re}(c(ab-ba)) \mathrm{Det}_{\O} -  |[c,b,a]|^2 = 0  \\
\Leftrightarrow \  &\ \mathrm{Det}_{\O}^2 + 4 \phi(a, \overline{b}, c) \mathrm{Det}_{\O} -  |[c,b,a]|^2 = 0 \\
\Leftrightarrow \ & \ \mathrm{Det}_{\O}^2 - 4 \phi(a, b, c) \mathrm{Det}_{\O} -  |[c,b,a]|^2 = 0.\\
\end{split}
\end{equation*}
\end{proof}
\end{subsection}
\begin{subsection}{The geometry of the Ogievetski\^i-Dray-Manogue sextic}
We will now describe elementary geometric features of the sextic $\SS \subset \mathbb{P}^{26}$ defined by the equation:
\[ (\det_{\O}(A))^2 -  4 \phi(c,b,a) \det_{\O}(A) - \Big|[c,b,a] \Big|^2  =0.\]
Some of them have been obtained with the help of \textit{Macaulay2} (\cite{M2}):

\begin{theo} \label{mopiii1}
\begin{enumerate}
\item The sextic $\SS \subset \mathbb{P}^{26}$ is irreducible and its singular locus is of codimension $4$ in $\mathbb{P}^{26}$. 

\item The neutral component the automorphism group of $\SS$ is $\mathrm{SO_7} \times \mathrm{SL}_{3}$.

\item The map:
\[ \mathrm{Gr}(6,\J_{3}(\O))/\!\!/ \mathrm{SO_7} \times \mathrm{SL}_{3} \barrow \mathrm{S}^6 \C^6 /\!\!/  \mathrm{GL}_{6}\]
which sends a six dimensional subspace $L \subset \J_{3}(\O)$ to the sextic $\SS \cap L$ is generically \'etale onto its image.
\end{enumerate}
\end{theo}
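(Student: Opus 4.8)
The plan is to derive all three assertions from the symmetry group, so I would first establish the inclusion $\mathrm{SO}_7\times\mathrm{SL}_3\subseteq\mathrm{Aut}(\SS)^0$. The mechanism that produces the large factor $\mathrm{SO}_7$ (rather than merely $\mathrm{G}_2=\mathrm{Aut}(\O)$) is the Clifford/spinor structure on the octonions: the left multiplications $L_{e_i}$ by imaginary units satisfy the Clifford relations and generate $\mathrm{Cl}_7$ acting on $\O\cong\C^8$ as its spinor module, so $\mathrm{Spin}_7$ acts on $\O$ and its conjugation action rotates the $L_{e_i}$ as the vector representation $\mathrm{Im}\,\O\cong\C^7$. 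Combined with $\mathrm{SL}_3$ acting on the $\C^3$ tensor slot, this makes the symmetric $24\times24$ family $M_\O$ congruence-covariant, hence preserves its degeneracy locus; by Theorem \ref{prop5} that locus is exactly $\SS$, so $\SS$ is $\mathrm{SO}_7\times\mathrm{SL}_3$-invariant up to scalar (this is also the source of the equivariance of the cokernel asserted in the first main theorem, and of the identity $\det M_\O=\SS^4$).

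For assertion (1) I would obtain irreducibility and the codimension statement at one stroke from the elementary fact that a hypersurface in $\mathbb{P}^{26}$ whose singular locus has codimension $\ge 3$ is automatically irreducible and reduced: a factorization $S=PQ$ with coprime factors would place the pure-codimension-$2$ set $V(P)\cap V(Q)$ in the singular locus, and a repeated factor would make the singular locus a divisor. So everything reduces to computing $\mathrm{Sing}(\SS)$ and showing it has codimension $4$. Writing $S=\mathrm{Det}_\O^2-4\phi\,\mathrm{Det}_\O-N$ with $N=|[c,b,a]|^2$, the gradient is $2(\mathrm{Det}_\O-2\phi)\nabla\mathrm{Det}_\O-4\mathrm{Det}_\O\nabla\phi-\nabla N$, and by the invariance just proved $\mathrm{Sing}(\SS)$ is a union of $\mathrm{SO}_7\times\mathrm{SL}_3$-orbits. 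I would identify its strata from the natural loci — on the Cartan cubic one already finds $\SS\cap\{\mathrm{Det}_\O=0\}=\{\mathrm{Det}_\O=N=0\}$, while the associator-degeneration and rank strata furnish the remaining candidate components — and then confirm the codimension. This is the step where I expect to rely on the quoted \emph{Macaulay2} computation, using equivariance to reduce to transversal slices of the orbit strata.

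For the maximality half of (2) I would linearize: the Lie algebra of the projective stabilizer is read off from $\mathfrak g=\{X\in\mathfrak{gl}_{27}:X\cdot S\in\C\cdot S\}$, where $X\cdot S=\sum_i(Xv)_i\partial_iS$. This is a finite explicit linear system in the entries of the $27\times27$ matrix $X$, and since $\mathfrak{so}_7\oplus\mathfrak{sl}_3\oplus\C\,\mathrm{Id}$ already lies in $\mathfrak g$ (Euler's relation $\mathrm{Id}\cdot S=6S$ together with the invariance above), the only thing left is the upper bound $\dim\mathfrak g\le 1+29=30$. Solving the system then yields $\mathrm{Lie}(\mathrm{Aut}(\SS)^0)=\mathfrak{so}_7\oplus\mathfrak{sl}_3$, whence $\mathrm{Aut}(\SS)^0=\mathrm{SO}_7\times\mathrm{SL}_3$ up to a finite quotient.

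Finally, for (3) I would first remove the word \emph{étale}: over $\C$ any dominant morphism that is generically finite onto its image is generically étale onto it (generic smoothness, plus finite $=$ étale), so the whole content is injectivity of the differential of the descended map $\bar F$ at a generic $6$-plane $L_0$. With $T_{L_0}\mathrm{Gr}=\mathrm{Hom}(L_0,V/L_0)$ and $V=\J_3(\O)$, the differential of $L\mapsto S|_L$ sends a lift $\psi:L_0\to V$ to the sextic form $\Phi(\psi):v\mapsto dS_v(\psi(v))$ on $L_0$; the target $\mathrm{GL}_6$ absorbs the reparametrization directions $\Phi(\mathrm{End}(L_0))$, and the source symmetries contribute $B=\{\xi|_{L_0}:\xi\in\mathfrak{so}_7\oplus\mathfrak{sl}_3\}\subseteq\ker\Phi$. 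A short diagram chase shows $d\bar F_{L_0}$ is injective exactly when $\ker\Phi=B$; since $B\subseteq\ker\Phi$ gives the bound $\dim\ker\Phi\ge29$ for generic $L_0$ and $\dim\ker\Phi$ is upper semicontinuous on the Grassmannian, it suffices to exhibit one $L_0$ with $\dim\ker\Phi=29$, i.e. $\mathrm{rank}\,\Phi=133$, by an explicit rank computation. The main obstacle, here as in (1), is precisely this equality $\ker\Phi=B$: it says that the infinitesimal symmetries of $\SS$ restricted to a generic $6$-plane acquire no accidental solutions beyond the global ones, and I do not expect a purely conceptual proof — a controlled computer verification, organized by the $\mathrm{SO}_7\times\mathrm{SL}_3$-symmetry to keep it tractable, seems the realistic route.
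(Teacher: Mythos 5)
Your proposal is correct, and its skeleton is essentially the paper's: part (1) rests on the \textit{Macaulay2} computation of the codimension of $\mathrm{Sing}(\SS)$ plus the observation that a hypersurface with small singular locus is reduced and irreducible (the paper says ``normal hypersurface'', your coprime-factor argument is the same fact); the inclusion $\mathrm{SO}_7\times\mathrm{SL}_3\subseteq\mathrm{Aut}^{0}(\SS)$ rests on spin/triality covariance (your congruence-covariance $L_{T_1(z)}=T_2L_zT_2^{-1}$ of $M_{\O}$ is exactly the formula the paper uses in Section 2.3, while in the proof of this theorem it phrases the same mechanism by applying triality to kernel vectors); and part (3) is reduced, exactly as in the paper, to checking that the differential of the restriction-to-a-$6$-plane map has rank $133$ at one random point, plus semicontinuity and characteristic-zero generic smoothness. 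The one genuine divergence is the maximality half of (2): you propose to solve directly the linear system $\mathfrak g=\{X\in\mathfrak{gl}_{27}: X\cdot S\in\C\cdot S\}$, whereas the paper extracts the bound $\dim\mathfrak{aut}(\SS)\le 29$ from the \emph{same} rank-$133$ computation it needs for (3): the $\mathfrak{aut}(\SS)$-directions at a generic framed $6$-plane lie in the kernel of $d\varphi$ (up to the Euler direction), so $\mathrm{rank}\,d\varphi\ge 133$ forces $\dim\mathfrak{aut}(\SS)\le 162-133=29$. This economy is worth adopting: your own setup for (3) already contains it, since $B\subseteq\ker\Phi$ together with $\mathrm{rank}\,\Phi=133$ and the injectivity of the restriction $\mathfrak{so}_7\oplus\mathfrak{sl}_3\to\mathrm{Hom}(L_0,\J_3(\O))$ at a generic $L_0$ gives $\dim\mathfrak{aut}(\SS)\le 29$ at once; your separate $\mathfrak{gl}_{27}$-system (sextics in $27$ variables against $729$ unknowns, versus sextics in $6$ variables) is therefore a redundant and much heavier computation, though perfectly valid. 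One small imprecision, not fatal: ``$d\bar F_{L_0}$ is injective exactly when $\ker\Phi=B$'' should be an ``if''; the precise criterion is $\ker\Phi+\mathrm{End}(L_0)=B+\mathrm{End}(L_0)$, and only the sufficient direction (which is what the rank computation delivers) is used.
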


\begin{proof}
The codimension of the singular locus of $\SS \subset \mathbb{P}^{26}$ is obtained computationally with the help of \textit{Macaulay2}. The only <<challenge>> is to find a way to write explicitly the equation $\SS$ as, to the best of my knowledge, there is no obvious package to simulate the octonion algebra in \textit{Macaulay2}. We refer to the first appendix for a description of our algorithm. The irreducibility of $\SS$ follows as it is then a normal hypersurface in $\mathbb{P}^{26}$.
\bigskip

\noindent We now prove the second and third assertion. We first describe the action of $\mathrm{SO_7} \times \mathrm{SL}_{3}$ on $\J_{3}(\O)$. Let $T_1 \in \mathrm{SO_7}$ and $A =  \begin{pmatrix} \lambda_1 & c & \overline{b} \\ \overline{c} & \lambda_2 & a \\ b & \overline{a} & \l3 \end{pmatrix} \in \J_{3}(\O)$, we put:
\[ T_1.A = \begin{pmatrix} \lambda_1 & T_1(c) & (\overline{T_1(b)}) \\ (\overline{T_1(c)}) & \lambda_2 & T_1(a) \\ T_1(b) & (\overline{T_1(a)}) & \l3 \end{pmatrix} \in \J_{3}(\O),\]
where $\mathrm{SO_7} \subset \mathrm{SO_8}$ is the isotropy group of $1 \in \O$.
On the other hand, for $C \in \mathrm{SL_3}$ and $A =  \begin{pmatrix} \lambda_1 & c & \overline{b} \\ \overline{c} & \lambda_2 & a \\ b & \overline{a} & \l3 \end{pmatrix} \in \J_{3}(\O)$, we put:
\[ C.A = {}^{t} C \times A \times C \in \J_{3}(\O),\]
where $\times$ denotes here the product of $3*3$ matrices (there is no associativity issues since all coefficients of $C$ are complex and thus commute with all coefficients of $A$). Since $T_1(x) = x$, for all $T_1 \in \mathrm{SO_7}$ and for all $x \in \O$ with zero imaginary part, we deduce that:
\[ C.(T_1.A) = T_1.(C.A),\]
for all $T_1 \in \mathrm{SO_7},\ C \in \mathrm{SL_3}$ and $A \in \J_{3}(\O)$. Furthermore, we notice that $\mathrm{SO_7} \cap \mathrm{SL_3} = \{Id\}$ as subgroups of $\mathrm{SL}_{27}$. As a consequence, we find that the subgroup of $\mathrm{SL}_{27}$ generated by $\mathrm{SO_7}$ and $\mathrm{SL_3}$ is $\mathrm{SO_7} \times \mathrm{SL_3}$. Let $A \in \J_{3}(\O)$ be a singular matrix. By definition, there exists $\begin{pmatrix} x \\ y \\ z \end{pmatrix} \in \O^{\oplus 3}$, with $\begin{pmatrix} x \\ y \\ z \end{pmatrix} \neq 0_{\O^{\oplus 3}}$ such that:
\[A  \times \begin{pmatrix} x \\ y \\ z \end{pmatrix} = 0_{\O^{\oplus 3}}.\]
For any $T_1 \in \mathrm{SO_7}$, the \textit{Triality principle} (see section 2 of \cite{triality} for instance) insures the existence of $T_2 \in \mathrm{SO}_8$ such that:
\[\forall (u,v) \in \O^2, \  T_2(u.v) = T_1(u).T_2(v).\]
One then easily computes that:
\[(T_1.A) \times \begin{pmatrix} T_2(x) \\ T_2(y) \\ T_2(z) \end{pmatrix}  = \begin{pmatrix} T_2 & & \\ &T_2 & \\ & & T_2 \end{pmatrix} \times \left(A \times    \begin{pmatrix} x \\ y \\ z \end{pmatrix} \right) = 0_{\O^{\oplus 3}}.\]
We have $\begin{pmatrix} T_2(x) \\ T_2(y) \\ T_2(z) \end{pmatrix} \neq 0_{\O^{\oplus 3}}$, hence $T_1.A$ is also a singular matrix. Furthermore, we obviously note that for all $C \in \mathrm{SL_3}$, we have:
$$ (C.A) \times \left( C^{-1} \times \begin{pmatrix} x \\ y \\ z \end{pmatrix} \right)  = 0_{\O^{\oplus 3}},$$
which shows that $C.A$ is also a singular element of $\J_{3}(\O)$. The variety of singular matrices in $\J_{3}(\O)$ is thus preserved by the action of $\mathrm{SO_7} \times \mathrm{SL_3}$. By Theorem \ref{prop5}, we deduce that $\mathrm{SO_7} \times \mathrm{SL_3}$ is a subgroup of the neutral component of $\mathrm{Aut}(\SS)$.

\bigskip

In order to prove that $\mathrm{Aut}^{0}(\SS) = \mathrm{SO_7} \times \mathrm{SL_3}$ (up to a finite quotient), we show, using \textit{Macaulay2}, that the Lie algebra of $\mathrm{Aut}^{0}(\SS)$ has dimension less or equal to $29$. Consider the map:
\begin{equation*}
\begin{split}
\varphi \,\,\, : \,\,\, & M_{6 \times 27} /\!\!/ \mathrm{Aut}^{0}(\SS) \barrow S^6 \mathbb{C}^{6} \\
         & \left[(m)_{i,j} \right] \barrow \SS(m_{1,1}.z_1 + \cdots + m_{1,6}.z_6, \cdots, m_{27,1}.z_1 + \cdots + m_{27,6}.z_6),\\ 
\end{split}         
\end{equation*}
where $z_1,\cdots,z_6$ is a basis for $(\mathbb{C}^{6})^*$. Let us denote by $\mathfrak{aut}(\SS)$ the Lie algebra of $\mathrm{Aut}^{0}(\SS)$. The differential of $\phi$ at a point $\left[(m)_{i,j} \right] \in M_{6 \times 27}$ is given by :
 
\begin{equation*}
\begin{split}
d \varphi_{\left[m_{i,j}\right]} \,\,\, : \,\,\, & M_{6 \times 27} / \mathfrak{aut}(\SS) \barrow S^6 \mathbb{C}^{6} \\
         & \left[q_{i,j} \right] \barrow \sum_{i=1}^{27} \left( \sum_{j=1}^{6} q_{i,j}.z_j \right) \times \dfrac{\SS}{\partial x_i}(m_{1,1}.z_1 + \cdots + m_{1,6}.z_6, \cdots, m_{27,1}.z_1 + \cdots + m_{27,6}.z_6),\\
\end{split}
\end{equation*}
We compute with the help of \textit{Macaulay2} that the rank of $d \varphi$ at a generic point in $ M_{6 \times \J_{3}(\O)} / \mathfrak{aut}(\SS)$ is bigger or equal to $133$ (see the appendix A for the details of the algorithm). We deduce that:
$$\dim  M_{6 \times \J_{3}(\O)} / \mathfrak{aut}(\SS) \geq 133,$$ which implies that $\dim \mathfrak{aut}(\SS) \leq 29$. Since $\mathrm{SO_7} \times \mathrm{SL_3} \subset \mathrm{Aut}^{0}(\SS)$ and $\dim \mathrm{SO_7} \times \mathrm{SL_3} = 29$, we find that:

$$\mathrm{Aut}^{0}(\SS) = \mathrm{SO_7} \times \mathrm{SL_3}.$$

\bigskip

We can now infer that the map:
\[ \mathrm{Gr}(6,\J_{3}(\O))/\!\!/ \mathrm{SO_7} \times \mathrm{SL}_{3} \barrow \mathrm{S}^6 \C^6 /\!\!/  \mathrm{GL}_{6}\]
is generically \'etale onto its image. Indeed, the rank of $d \phi$ being generically $133$ and $\dim  M_{6 \times \J_{3}(\O)} /\!\!/ \mathrm{SO_7} \times \mathrm{SL}_{3} = 162 - 29 = 133$, we find that $\phi$ is generically \'etale onto its image. We finally notice that $\phi$ is $\mathrm{GL}_{6}$-equivariant, so that the map induces by $\varphi$ on the quotients by $\mathrm{GL}_6$:
\[  \big(M_{6 \times \J_{3}(\O)} /\!\!/ \mathrm{SO_7} \times \mathrm{SL}_{3}\big) /\!\!/ \mathrm{GL}_6 =  \mathrm{Gr}(6,\J_{3}(\O))/\!\!/ \mathrm{SO_7} \times \mathrm{SL}_{3}  \barrow \mathrm{S}^6 \C^6 /\!\!/  \mathrm{GL}_{6}\]
is equally generically \'etale onto its image.
\end{proof}

\begin{rem} \label{rem1}
The generic isotropy group for the action of $\mathrm{SO_7}$ on $\mathbb{C}^{\oplus 3} \otimes \O$ is $\mathrm{SO_4}$. The generic isotropy group for the action of $\mathrm{SL}_{3}$ on $M_{3}(\C)$ by transpose-conjugation is $\mathrm{O}(3)$. We deduce that the generic isotropy group for the action of  $\mathrm{SO_7} \times \mathrm{SL}_{3}$ on $\J_{3}(\O)$ contains $\mathrm{SO}_4 \times \mathrm{O}(3)$, which has dimension $9$. As a consequence,  the triplet $(\J_{3}(\O), \mathrm{SO_7} \times \mathrm{SL}_{3} \times \C^*, \SS)$ is not prehomogeneous and the generic orbit of $\mathrm{SO_7} \times \mathrm{SL}_{3} \times \C^*$ in $\J_{3}(\O)$ has codimension at least $27- (21+8+1 - (3+6)) = 6$.
\end{rem}

\end{subsection}

\begin{subsection}{A tautological rank $4$ sheaf on the Ogievetski\^i-Dray-Manogue sextic}

The main result of this section is related to the cokernel of $\begin{pmatrix} \lambda_1.I_8 & L_c & {}^{t} L_b \\ {}^{t} L_c & \lambda_2.I_8 & L_a \\ L_b & {}^{t} L_a & \l3.I_8 \end{pmatrix}$. In the following, we work over the polynomial ring $R = \mathbb{C}[x_1, \ldots, x_{27}]$. We let $c = x_1e_1+ \ldots + x_8e_8$, $b = x_{9}e_1 + \ldots + x_{16}e_n$, $a =x_{17}e_1 + \ldots + x_{24}e_n$, $\lambda_1 = x_{25}e_1$, $\lambda_2 = x_{26}e_1$ and $\l3 = x_{27}e_1$, where $e_1, \ldots, e_8$ is the canonical basis of $\mathbb{O}$.

\begin{theo}
Let $\mathrm{M_{\O}} = \begin{pmatrix} \lambda_1.I_8 & L_c & {}^{t} L_b \\ {}^{t} L_c & \lambda_2.I_8 & L_a \\ L_b & {}^{t} L_a & \l3.I_8 \end{pmatrix}$ in $\mathcal{M}_{24,24}(R)$ and let $\mathrm{E_{ODM}}$ be the cokernel of:
$$ \mathbb{C}^{24} \otimes \OO_{\mathbb{P}^{26}}(-1) \stackrel{\mathrm{M_{\O}}}\barrow \mathbb{C}^{24} \otimes \OO_{\mathbb{P}^{26}}.$$
The sheaf $\mathrm{E_{ODM}}$ is a $\mathrm{SO}_{7} \times\mathrm{SL}_{3}$-equivariant sheaf supported on $\SS$ and has generically rank $4$ on $\SS$.

\end{theo}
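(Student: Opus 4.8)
The plan is to establish three separate assertions: that $\mathrm{E_{ODM}}$ is supported on $\SS$, that it is generically of rank $4$, and that it carries a $\mathrm{SO}_7 \times \mathrm{SL}_3$-equivariant structure. The first claim is immediate from Theorem \ref{prop5}: the support of a cokernel is exactly the locus where the defining matrix $\mathrm{M_{\O}}$ fails to be surjective, which is the degeneracy locus of $\mathrm{M_{\O}}$, and this is precisely the hypersurface $\SS$ by the second assertion of that Theorem. So $\mathrm{E_{ODM}}$ is set-theoretically supported on $\SS$.

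For the generic rank, I would work at a generic point $A \in \SS$ and compute $\dim \ker \mathrm{M_{\O}}(A)$, since the rank of the cokernel at $A$ equals $\dim \coker \mathrm{M_{\O}}(A) = 24 - \mathrm{rk}\,\mathrm{M_{\O}}(A) = \dim \ker {}^t\mathrm{M_{\O}}(A)$, and $\mathrm{M_{\O}}$ is symmetric, so this is just $\dim \ker \mathrm{M_{\O}}(A)$. The key computational input is the block decomposition $\mathrm{M_{\O}} = R_1 R_2\, D\, {}^tR_2\, {}^tR_1$ obtained in the proof of Theorem \ref{prop5}, where the middle matrix $D$ is diagonal in blocks with the third block equal to $\nu.I_8 - \frac{1}{\delta}B\,{}^tB$. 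Since $R_1, R_2$ are invertible at a generic point, the corank of $\mathrm{M_{\O}}(A)$ equals the corank of that third block. The analysis in the proof of Theorem \ref{prop5} reduces the vanishing of $\det\mathrm{M_{\O}}$ to the eigenvalue equation \eqref{Ogiet} for the octonionic operator $L_aL_bL_c + {}^tL_c{}^tL_b{}^tL_a$, and the $4 \times 4$ block-determinant computation there shows that the relevant scalar factor $(2\mathrm{Re}(c.a.b_0) - \kappa)(2\mathrm{Re}(c.b_0.a)-\kappa) - |[c,b,a]|^2$ appears with multiplicity $4$ (the final determinant being of a scalar multiple of $I_4$). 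Thus at a generic point of $\SS$ the kernel of the reduced eigenvalue system is one-dimensional inside a $4$-dimensional space, but when pulled back through the two octonionic half-spaces $x = x_0 + x_1 e$ the solution space doubles, and the $\coker$ rank comes out to $4$. I would verify this count computationally via \textit{Macaulay2} at a generic rational point of $\SS$, exactly as the codimension computations elsewhere in the paper are carried out.

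For the equivariance, I would exhibit explicit commuting actions of $\mathrm{SO}_7$ and $\mathrm{SL}_3$ on the source and target of $\mathrm{M_{\O}}$ that intertwine the map. The $\mathrm{SL}_3$-action is by $A \mapsto {}^tC \times A \times C$ on $\J_3(\O)$, which on the $24$-dimensional column space $\O^{\oplus 3}$ acts through $C^{-1}$ (on the left) as in the proof of Theorem \ref{mopiii1}, and since all entries of $C$ are complex scalars these commute with the octonionic multiplication operators appearing in $\mathrm{M_{\O}}$. The $\mathrm{SO}_7$-action uses the \emph{Triality principle} invoked in Theorem \ref{mopiii1}: to $T_1 \in \mathrm{SO}_7$ one associates $T_2 \in \mathrm{SO}_8$ with $T_2(u.v) = T_1(u).T_2(v)$, and the computation there shows $(T_1.A) \times (T_2^{\oplus 3} v) = T_2^{\oplus 3}(A \times v)$, i.e. $\mathrm{M_{\O}}(T_1.A) = (\mathrm{diag}(T_2,T_2,T_2)) \mathrm{M_{\O}}(A) (\mathrm{diag}(T_2,T_2,T_2))^{-1}$ up to the symmetric twist one must track on the source. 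Since $\mathrm{SO}_7$ fixes $1 \in \O$, the diagonal $\lambda_i.I_8$ blocks are preserved and the intertwining is compatible with the grading by $\OO(-1)$. Assembling these two commuting group actions into a $\mathrm{SO}_7 \times \mathrm{SL}_3$-action on the complex and passing to cokernels yields the equivariant structure on $\mathrm{E_{ODM}}$.

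I expect the main obstacle to be the rank-$4$ computation: unlike the associative case, there is no clean matrix-factorization identity $\coker = $ tautological kernel, and the generic rank is not visible from the cubic $\mathrm{Det}_{\O}$ alone but only from the finer sextic factorization. The delicate point is to control the dimension of $\ker \mathrm{M_{\O}}(A)$ over the \emph{generic} point of $\SS$ rather than over a special stratum (such as $\mathbb{OP}^2$ or the singular locus of codimension $4$), where the rank could jump. I would therefore pin down a single explicit generic point and confirm the corank equals $4$ there, and then invoke upper-semicontinuity of the kernel dimension together with irreducibility of $\SS$ to conclude that the rank is generically $4$ on all of $\SS$.
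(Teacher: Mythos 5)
Your handling of the support and of the equivariance is essentially the paper's own proof: the support is read off from Theorem \ref{prop5}, the $\mathrm{SL}_3$-equivariance is immediate because the entries of $C$ are complex scalars, and the $\mathrm{SO}_7$-equivariance comes from the triality identity $L_{T_1(z)} = T_2 L_z T_2^{-1}$, which conjugates $\mathrm{M_{\O}}$ by $\mathrm{diag}(T_2,T_2,T_2)$; your worry about a ``symmetric twist on the source'' is harmless, since $T_2$ is orthogonal and hence ${}^{t}L_{T_1(z)} = T_2\, {}^{t}L_z\, T_2^{-1}$ as well, so the same conjugation works on all blocks.

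The divergence, and the gap, is in the rank-$4$ claim. The paper's argument is a degree count you missed entirely: $\det \mathrm{M_{\O}}$ is a polynomial of degree $24$ vanishing exactly on the sextic $\SS$, which is irreducible, hence $\det \mathrm{M_{\O}} = c\,\SS^{4}$; no point computation or semicontinuity is invoked. Your proposed substitute --- verify corank $4$ at one explicit point, then use upper semicontinuity and irreducibility of $\SS$ --- cannot conclude: corank is \emph{upper} semicontinuous, so a corank-$4$ point only yields corank $\le 4$ on a dense open subset of $\SS$, and gives no lower bound at the generic point (your chosen point could a priori lie on a jump stratum, and nothing you say rules that out; note that $\det\mathrm{M_{\O}} = c\,\SS^4$ is by itself compatible with generic corank $1$, $2$ or $3$). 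What actually pins the generic corank to exactly $4$ is the structural fact you half-identify and then garble: after removing the invertible factors $R_1,R_2$ and the invertible diagonal blocks, the corank of $\mathrm{M_{\O}}$ at a generic point of $\SS$ equals the corank of the $8\times 8$ block, which in the quaternionic splitting reads $\begin{pmatrix} \alpha.I_4 & M_{12} \\ M_{21} & \beta.I_4 \end{pmatrix}$ with $M_{12}M_{21} = s.I_4$ a \emph{scalar} matrix, the equation of $\SS$ being precisely $\alpha\beta - s = 0$. When this holds and $\beta \neq 0$, \emph{every} $x_0 \in \C^4$ yields a kernel vector $\begin{pmatrix} x_0 \\ -\beta^{-1}M_{21}x_0 \end{pmatrix}$, so the kernel is $4$-dimensional --- not ``one-dimensional, doubled'' as you wrote, which would in any case give $2$, not $4$. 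This computation is the substantive content behind the lower bound; it is what your semicontinuity argument cannot supply, and it is also what the paper's one-line degree count quietly relies on (the degree count alone, like your point count, only bounds the generic corank above by $4$).
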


\begin{proof}
Theorem \ref{prop5} insures that the degeneracy of $\mathrm{M_{\O}}$ is $\SS$, so that $\mathrm{E_{ODM}}$ is supported on $\SS$. Note that the determinant of $\mathrm{M_{\O}}$ is of degree $24 = 4 \times \deg(\SS)$. In particular, we have $\mathrm{rank}(\mathrm{E_{ODM}}) = 4$. Let us now prove that $\mathrm{M_{\O}}$ is $\mathrm{SO}_{7} \times\mathrm{SL}_{3}$-equivariant. The $\mathrm{SL}_3$-equivariance is obvious given the definition of the $\mathrm{SL}_3$ action. We are left to prove the $\mathrm{SO}_7$-equivariance. Let $T_1 \in \mathrm{SO}_7$, the triality principle insures that there exists $T_2 \in \mathrm{SO}_8$ such that:
\[ \forall \ (u,v) \in \O^2, \ T_2(uv) = T_1(u)T_2(v).\]
It easily follows that for any $z \in \O$, we have:
\[ L_{T_1(z)} = T_2 L_z T_2^{-1}.\]
As a consequence, for any $A \in \J_{3}(\O)$ and any $T_1 \in \mathrm{SO}_7$ as above, we have:
\[ T_1.\mathrm{M_{\O}} = \begin{pmatrix} T_2 & & \\ & T_2 & \\ & & T_2 \end{pmatrix} \mathrm{M_{\O}} \begin{pmatrix} T_2^{-1} & & \\ & T_2^{-1} & \\ & & T_2^{-1} \end{pmatrix}.\]
This proves that $\mathrm{M_{\O}}$ is also $\mathrm{SO}_7$-equivariant. We can conclude that its cokernel, $\mathrm{E_{ODM}}$ is $\mathrm{SL}_3 \times \mathrm{SO}_7$-equivariant.
\end{proof}

\begin{rem}
The complete description of the higher degeneracy loci of $\mathrm{M_{\O}}$ seems to be an interesting question. It seems however to be also a difficult one, as the action of $\mathrm{SO}_{7} \times\mathrm{SL}_{3}$ on $\mathbb{P}^{26}$ has infinitely many orbits (see remark \ref{rem1}).
\end{rem}

\end{subsection}
\end{section}

\begin{section}{The geometry of the twisted eigenvalue problem for $\J_{3}(\O)$}

The motivation for introducing the twisted eigenvalue problem is the lack of <<prehomogeneity>> of the space $\big(\mathbb{P}(\J_{3}(\O)), \mathrm{SO}_7 \times \mathrm{SL}_3 \times \C^*, \SS \big)$. This lack of prehomogeneity is partly explained by the relatively big codimension of the subgroup of $\mathrm{E}_{6}$ generated by $\mathrm{SO}_{7}$ and $\mathrm{SL}_3$. On the other hand, it is well-known (see chapter $14$ of \cite{cali}) that $\mathrm{E}_6$ is generated by $\mathrm{Spin}_8$ and $\mathrm{SL}_{3}$. Thus, one could hope for an eigenvalue problem having a larger group of symmetries. As it happens, the symmetry group of the twisted octonionic eigenvalue problem contains the subgroup of $\mathrm{SL}_{27}$ generated by $\mathrm{Spin}_{7}$ and $\mathrm{SL}_3$ \footnote{The action of $\mathrm{Spin}_7$ on $\J_{3}(\O)$ that we exhibit is not the standard one and does not factor through an embedding of $\mathrm{Spin}_7$ in $\mathrm{E}_6$. As a matter of fact, we haven't been able to identify the subgroup of $\mathrm{SL}_{27}$ generated by $\mathrm{Spin}_{7}$ and $\mathrm{SL}_3$.} and we prove that $\J_{3}(\O)$ is prehomogeneous for this action. We also show that this twisted problem is singular on the union of the Cartan cubic and a sextic hypersurface. To the best of our knowledge, this geometry has never appeared before in the literature. 
\smallskip

\begin{subsection}{The twisted octonionic eigenvalue problem}
Let $A = \begin{pmatrix} \lambda_1 & c & \overline{b} \\ \overline{c} & \lambda_2 & a \\ b & \overline{a} & \l3 \end{pmatrix} \in \J_{3}(\O)$. We say that $\begin{pmatrix} x \\ y \\ z \end{pmatrix} \in \O^3$ is in the \textit{<<twisted kernel>>} of $A$ if:
\begin{equation*}
\left\{
\begin{split}
\lambda_1 x + y c + \overline{b} z & \ = 0 \\
x \overline{c} + \lambda_2 y + a z & \ = 0 \\
b x + \overline{a} y + \lambda_3 z & \ = 0\\
\end{split}
\right.
\end{equation*}
We say that $A \in \J_{3}(\O)$ is twisted-singular if the twisted kernel of $A$ is not zero. We notice that $\begin{pmatrix} \lambda_1 & c & \overline{b} \\ \overline{c} & \lambda_2 & a \\ b & \overline{a} & \l3 \end{pmatrix}$ is twisted-singular if and only if the $24*24$ matrix $N_{A} = \begin{pmatrix} \lambda_1.I_8 & R_c & {}^{t} L_b \\ {}^{t} R_c & \lambda_2.I_8 & L_a \\ L_b & {}^{t} L_a & \l3.I_8 \end{pmatrix}$ is singular, where $R_c$ is the $8*8$ matrix representing the right multiplication by $c$ in $\O$. Our first result related to the twisted eigenvalue problem is the:

\begin{theo} \label{mopi2}
The degeneracy locus of the matrix $N_A$ is given by the equation:

\[ \bigg(\det_{\O}(A)-2\mathrm{Re}(cab) + 2 \mathrm{Re}(\overline{c}ba) \bigg)^4 \times \mathfrak{S}^2 = 0,\]
where:
\begin{equation*}
\begin{split}
\mathfrak{S} & = \left(\ll_1|a|^2 + \ll_2|b|^2 + \ll_3|c|^2 - \ll_1\ll_2\ll_3 \right)^2 \\
                              & \hspace{0.5cm} - 4\left(\ll_1|a|^2 + \ll_2|b|^2 + \ll_3|c|^2 - \ll_1\ll_2\ll_3 \right)\mathrm{Re}(c)\mathrm{Re}(ab) \\
                              & \hspace{0.5cm} + 4\left(|b|^2|a|^2\mathrm{Re}(c)^2 + |c|^2 \mathrm{Re}(ab)^2 - |a|^2|b|^2|c|^2 \right),
\end{split}
\end{equation*}
with $|z|^2 = z\overline{z}$  and $\mathrm{Re}(z) = \frac{z + \overline{z}}{2}$.
\end{theo}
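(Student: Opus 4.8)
The plan is to mimic, step by step, the computation in the proof of Theorem~\ref{prop5}, tracking the single substitution $L_c\rightsquigarrow R_c$. First I observe that $N_A$ is still symmetric, because ${}^{t}R_c=R_{\overline c}$, and that every structural identity used in the block factorization of Theorem~\ref{prop5} ($L_cL_{\overline c}=|c|^2I_8$, $L_aL_{\overline a}=|a|^2I_8$, etc.) has a verbatim analogue for right multiplications ($R_cR_{\overline c}=|c|^2I_8$). Running the same factorization $N_A=R_1R_2\,D\,{}^{t}R_2\,{}^{t}R_1$ with $L_c$ replaced by $R_c$ (the diagonal entries of $D$ depend only on $|a|^2,|b|^2,|c|^2$ and the $\lambda_i$, hence are unchanged) reduces the problem to
\[ \det(N_A)=\det\!\big(\gamma.I_8+P\big),\qquad P:=L_aL_bR_c+R_{\overline c}L_{\overline b}L_{\overline a},\]
where $\gamma=\lambda_1\lambda_2\lambda_3-\lambda_1|a|^2-\lambda_2|b|^2-\lambda_3|c|^2=\det_{\O}(A)-2\mathrm{Re}(cab)$. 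Thus $N_A$ is twisted-singular if and only if $s:=2\mathrm{Re}(cab)-\det_{\O}(A)$ is a complex eigenvalue of $P$, and the whole statement becomes the computation of the characteristic polynomial of the $8\times8$ operator $P$.

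Next I determine the eigenvalues of $P$ exactly as in Theorem~\ref{prop5}: I pick a quaternionic subalgebra $M\subset\O$ containing $a$ and $c$, write $\O=M\oplus M.e$, and split $b=b_0+b_1.e$ and $x=x_0+x_1.e$. Using the four rules $u.(v.e)=(v.u).e$, $(u.e)(v.e)=-\overline v.u$, $u.e=e.\overline u$ and $(u.e).v=(u.\overline v).e$, the eigenvalue equation $Px=\kappa x$ becomes a $2\times2$ block system over $\H$ in the unknowns $\big(x_0,\overline{x_1}\big)$. Here lies the essential difference with the untwisted case: since $R_c$ is a \emph{right} multiplication, the diagonal blocks are now of the two-sided form $x_0\mapsto (ab_0)\,x_0\,c+\overline{(ab_0)}\,x_0\,\overline c$, which is \emph{not} a scalar multiple of $I_4$ (whereas the pure left multiplications of the untwisted problem produced the scalar blocks $2\mathrm{Re}(cab_0).I_4$ and $2\mathrm{Re}(cb_0a).I_4$). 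This is precisely the reason the answer ceases to be the single perfect power $\SS$ to the fourth and becomes the genuinely reducible expression $(\cdots)^4\,\mathfrak{S}^2$.

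To evaluate the $8\times8$ determinant of this block system I pass to the identification $M\otimes\C\simeq M_2(\C)$, under which the octonionic conjugation becomes the adjugate and the octonionic norm becomes the determinant; each two-sided multiplication $x\mapsto p\,x\,q$ then acquires a transparent $4\times4$ matrix and its reduced norm/trace are readable. Exploiting the residual $\mathrm{SL}_2$ (unit-quaternion) symmetry of the reduced system, I expect the $8\times8$ determinant to be a perfect square $r(\kappa)^2$, equivalently the characteristic polynomial of $P$ to factor as $(\kappa-\kappa_1)^4\,q(\kappa)^2$ with a \emph{rational} eigenvalue $\kappa_1=2\mathrm{Re}(\overline c\,b\,a)$ of multiplicity $4$ and an irreducible quadratic factor $q$ of multiplicity $2$ (so that every eigenvalue of $P$ has even algebraic multiplicity). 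This determinant computation is the main obstacle: unlike in Theorem~\ref{prop5}, the non-scalar diagonal blocks prevent a clean collapse to a power, so the cubic and the sextic must be extracted honestly from the reduced $4\times4$ determinant. As elsewhere in the paper, the resulting polynomial identity can be corroborated by computer algebra (\textit{Macaulay2}/\textit{Bertini}); indeed the explicit shape of $\mathfrak{S}$ was first found that way.

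Finally I substitute $\kappa=s$ and simplify using the same reductions as in Theorem~\ref{prop5}, namely $\mathrm{Re}(cab)=\mathrm{Re}(cab_0)$, $\mathrm{Re}(cba)=\mathrm{Re}(cb_0a)$ and $|[c,b,a]|^2=|\overline c a-a\overline c|^2\,|b_1|^2$. The linear factor then gives $s-\kappa_1=-\big(\det_{\O}(A)-2\mathrm{Re}(cab)+2\mathrm{Re}(\overline c\,b\,a)\big)$, while the quadratic factor gives $q(s)=\mathfrak{S}$, whence
\[ \det(N_A)=\big(\det_{\O}(A)-2\mathrm{Re}(cab)+2\mathrm{Re}(\overline c\,b\,a)\big)^4\times\mathfrak{S}^2\]
up to a nonzero constant, which is the equation of the degeneracy locus. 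The degree bookkeeping $24=4\cdot 3+2\cdot 6$ is consistent with this factorization and provides a useful final sanity check.
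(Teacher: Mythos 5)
Your skeleton coincides with the paper's own proof: the same symmetric factorization of $N_A$ through the matrices $S_1,S_2$ (valid because ${}^{t}R_c=R_{\overline{c}}$ and $R_cR_{\overline{c}}=|c|^2I_8$), the same reduction to deciding when $2\mathrm{Re}(cab)-\det_{\O}(A)$ is a complex eigenvalue of $P=L_aL_bR_c+{}^{t}R_c\,{}^{t}L_b\,{}^{t}L_a$, and the same quaternionic splitting $\O=M\oplus M.e$ with $b=b_0+b_1.e$, $x=x_0+x_1.e$. You also correctly isolate the structural difference from Theorem \ref{prop5}: the diagonal blocks of the split system are two-sided multiplications, no longer scalar, which is why the answer ceases to be a power of a single sextic.

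There is, however, a genuine gap at exactly the point you call ``the main obstacle''. The factorization $\det(\kappa I_8-P)=(\kappa-\kappa_1)^4\,q(\kappa)^2$ with $\kappa_1=2\mathrm{Re}(\overline{c}ba)$, which carries the entire content of the theorem, is introduced in your text with ``I expect'' and is justified only by an unexecuted identification $M\otimes\C\simeq M_2(\C)$ and an unspecified ``residual $\mathrm{SL}_2$ symmetry''; announcing that the identity can later be corroborated by computer algebra is not a derivation of it. The paper fills precisely this hole, for the factor $(\kappa-\kappa_1)^4$, by an explicit eigenvector construction inside the split system: it exhibits the two subspaces $V_1=\{x_0=0,\ \overline{x_1}.b_1.\overline{c}=c.\overline{x_1}.b_1\}$ and $V_2=\{\overline{a}.x_0.c=\overline{c}.\overline{a}.x_0,\ a.b_0.x_0.c=a.c.\overline{x_1}.b_1\}$, each of dimension at least $2$ by the quaternionic structure of $M$ and meeting only in $0$, on which $P$ acts as the scalar $2\mathrm{Re}(\overline{c}ba)$; hence this eigenvalue has multiplicity at least $4$ and $\det(\kappa I_8-P)=(\kappa-2\mathrm{Re}(\overline{c}ba))^4\,T(a,b,c,\kappa)$ with $T$ weighted-homogeneous of degree $12$. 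Your proposal contains no substitute for this argument, nor for the claim that $q$ occurs squared (equivalently that $T$ is a perfect square). To be fair, for the last step --- identifying the residual degree-$12$ factor with $\mathfrak{S}^2$ and its explicit formula --- the paper itself leans on Hauenstein's \textit{Bertini} computation, so deferring that part to computer algebra is consistent with the paper's own standard; but the multiplicity-four statement is proved there by hand, and it must be proved, not expected, in your write-up as well.
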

Note that $\det_{\O}(A)-2\mathrm{Re}(cab) + 2 \mathrm{Re}(\overline{c}ba) = \det_{\O}(A')$, where $A' = \begin{pmatrix} \lambda_1 & \overline{c} & \overline{a} \\ c & \lambda_2 & b \\ a & \overline{b} & \l3 \end{pmatrix}$. We refrain from adopting such a notation for the cubic term appearing in the equation of the degeneracy locus of $N_A$ as it may be a source of confusions. 
We prove Theorem \ref{mopi2} using a strategy similar to that used for Theorem \ref{prop5}.

\begin{proof}
For $a,b,c \in \O$ and $\ll_1, \ll_2, \ll_3 \in \mathbb{C}$ such that $|a|,|b|,|c|, \ll_1, \ll_2, \ll_3 \neq 0$ and $|c|^2\ll_1 - \dfrac{|c|^4}{\ll_2} \neq 0$, we define the following $24*24$ matrices in block form:
\[ S_1 = \begin{pmatrix} \frac{1}{|c|^2} R_c & 0_{8,8} & 0_{8,8} \\ 0_{8,8}& I_{8}& 0_{8,8} \\ 0_{8,8} & 0_{8,8}& \frac{1}{|a|^2} {}^{t} L_a \end{pmatrix} \ \ \textrm{and} \ \   S_2 = \begin{pmatrix} I_8 & \frac{|c|^2}{\ll_2} & 0_{8,8} \\ 0_{8,8}& I_{8}& 0_{8,8} \\ \frac{1}{\delta} B & \frac{|a|^2}{\ll_2}I_8 & I_8 \end{pmatrix},\]
where $B = L_aL_b R_c - \dfrac{|a|^2|c|^2}{\ll_2}.I_8$, $\delta = |c|^2\ll_1 - \dfrac{|c|^4}{\ll_2}$, $0_{8,8}$ is the zero $8*8$ matrix and $I_8$ the identity matrix of size $8$.
One easily checks that:

\[ \begin{pmatrix} \lambda_1.I_8 & R_c & {}^{t} L_b \\ {}^{t} R_c & \lambda_2.I_8 & L_a \\ L_b & {}^{t} L_a & \l3.I_8 \end{pmatrix} = S_1S_2 \begin{pmatrix} \delta.I_8 & 0_{8,8} & 0_{8,8} \\ 0_{8,8} & \ll_2.I_8 & 0_{8,8} \\ 0_{8,8} & 0_{8,8} & \nu.I_8 - \dfrac{1}{\delta} B{}^{t}B \end{pmatrix} {}^{t} S_2 {}^{t} S_1, \]
where $\nu = \ll_3|a|^2 - \frac{|a|^4}{\ll_2}$. Since $\det(S_1) =\dfrac{1}{|a|^8|c|^8}$ and $ \det(S_2) = 1$, we get:
\begin{equation*}
\begin{split}
\det \begin{pmatrix} \lambda_1.I_8 & R_c & {}^{t} L_b \\ {}^{t} R_c & \lambda_2.I_8 & L_a \\ L_b & {}^{t} L_a & \l3.I_8 \end{pmatrix} & = \dfrac{1}{|a|^{16} |c|^{16}} \det \left( \delta \nu \ll_2.I_8 - \ll_2.B{}^{t} B \right)\\
                                                                    & = \det \left((\ll_1 \ll_2 \ll_3 - \ll_1|a|^2 - \ll_2 |b|^2 - \ll_3 |c|^2).I_8 + (L_aL_bR_c +{}^{t}R_c {}^tL_b {}^{t} L_a) \right)\\
                                                                    & = \det(\det_{\mathbb{O}}(A) - 2 \mathrm{Re}(cab)).I_8 + (L_aL_bR_c + {}^{t}R_c {}^tL_b {}^{t} L_a)).
\end{split}
\end{equation*}

The expression $\det(\det_{\mathbb{O}}(A) - 2 \mathrm{Re}(cab)).I_8 + (L_aL_bR_c + {}^{t}R_c {}^tL_b {}^{t} L_a))$ has no denominators so that the above equality holds for all  $a,b,c \in \O$ and $\ll_1, \ll_2, \ll_3 \in \mathbb{C}$. We deduce the following:

\smallskip

\noindent The determinant of $\begin{pmatrix} \lambda_1.I_8 & R_c & {}^{t} L_b \\  {}^{t} R_c & \lambda_2.I_8 & L_a \\ L_b & {}^{t} L_a & \ll_3.I_8 \end{pmatrix}$ vanishes if and only if $ 2 \mathrm{Re}(cab) - \det_{\O}(A)$ is a (complex) eigenvalue of $(L_aL_bR_c +{}^{t}R_c {}^tL_b {}^{t} L_a)$.

\bigskip

We shall now determine the (complex) eigenvalue of $(L_aL_bR_c +{}^{t} R_c {}^tL_b {}^{t} L_a)$. This means we are looking for the $\k \in \C$ such that:
\begin{equation} \label{Mopi}
\exists x \in \O, \ x \neq 0 \ \ a.(b.(x.c)) + (\overline{b}.(\overline{a}.x)).\overline{c} = \kappa.x
\end{equation}
Let us consider a four dimensional quaternionic subalgebra of $\O$ which contains $a$ and $c$. Let $e \in \O$ be a vector orthogonal to $M$ such that $\O = M \oplus M.e$. We write $b = b_0 + b_1.e$ and $x = x_0 + x_1.e$, with $b_0,b_1,x_0,x_1 \in M$. Using the relations:
$$ u.(v.e) = (v.u).e, \ (u.e).(v.e) = -\overline{v}.u, \ u.e = e. \overline{u} \ \textrm{and} \ (u.e).v = (u. \overline{v}).e,$$
valid for all $(u,v) \in M^2$, we find that equation \ref{Mopi} is equivalent to the system:
\begin{equation*}
\left\{
 \begin{split} a.b_0. x_0.c + \overline{b_0}.\overline{a}.x_0.\overline{c} + a.\overline{x_1}.b_1.\overline{c} - {a}.c.\overline{x_1}.b_1 &= \kappa.x_0\\
x_1.\overline{c}.b_0.a + b_1.\overline{c}.\overline{x_0}.a + x_1.\overline{a}.\overline{b_0}.c - b_1.\overline{x_0}.a.c & = \kappa x_1.\\
\end{split}\right.
\end{equation*}
which, by conjugating the second line, is equivalent to:
\begin{equation*} 
\left\{
\begin{split}
a.b_0. x_0.c + \overline{b_0}.\overline{a}.x_0.\overline{c} + a.\overline{x_1}.b_1.\overline{c} - {a}.c.\overline{x_1}.b_1 &= \kappa.x_0\\
\overline{a}.\overline{b_0}.c.\overline{x_1} + \overline{c}.b_0.a.\overline{x_1}+ \overline{a}.x_0.c.\overline{b_1}- \overline{c}. \overline{a}.x_0 \overline{b_1} & = \kappa.\overline{x_1}
\end{split}
\right.
\end{equation*}
which, in turn, is equivalent to:
\begin{equation} \label{sysmopi}
\left\{
\begin{split}
a.b_0. x_0.c + \overline{b_0}.\overline{a}.x_0.\overline{c} + a.\overline{x_1}.b_1.\overline{c} - {a}.c.\overline{x_1}.b_1 &= \kappa.x_0\\
2 \mathrm{Re}(\overline{c} b a) \overline{x_1}+ \overline{a}.x_0.c.\overline{b_1}- \overline{c}. \overline{a}.x_0 \overline{b_1} & = \kappa.\overline{x_1}
\end{split}
\right.
\end{equation}
as $\mathrm{Re}(\overline{c}b_0a) = \mathrm{Re}(\overline{c}ba).$ It happens that the matrix form of this system can not be exploited as easily as in the proof of Theorem \ref{prop5}. We will nevertheless prove that for all $a,b,c \in \O$, the complex $2.\mathrm{Re}(\overline{c}ba)$ is an eigenvalue of multiplicity at least $4$ of $(L_aL_bR_c + {}^{t}R_c {}^tL_b {}^{t} L_a)$. Let $V_1$ be the space of vectors $\begin{pmatrix} x_0 \\ \overline{x_1} \end{pmatrix}$ such that:
\[ x_0 = 0 \ \textrm{et} \ \overline{x_1}.b_1.\overline{c} =  c.\overline{x_1}.b_1\]
One directly checks with the help of system (\ref{sysmopi}) that for any such vectors:
\[ (L_aL_bR_c + {}^{t}R_c {}^tL_b {}^{t} L_a)  \begin{pmatrix} x_0 \\ \overline{x_1} \end{pmatrix}  = 2 \mathrm{Re}(\overline{c}ba) \begin{pmatrix} x_0 \\ \overline{x_1} \end{pmatrix}.\] 
The quaternionic structure of $M$ insures that $\dim V_1 \geq 2$.  Let $V_2$ be the space of vectors $\begin{pmatrix} x_0 \\ \overline{x_1} \end{pmatrix}$ such that:
\[ \overline{a}.x_0.c = \overline{c}.\overline{a}.x_0 \ \textrm{and} \ {a}.{b_0}.x_0.{c} = {a}.c.\overline{x_1}.b_1\]
One directly checks with the help of system (\ref{sysmopi}) that for any such vectors:
\[ (L_aL_bR_c + {}^{t}R_c {}^tL_b {}^{t} L_a)  \begin{pmatrix} x_0 \\ \overline{x_1} \end{pmatrix}  = 2 \mathrm{Re}(\overline{c}ba) \begin{pmatrix} x_0 \\ \overline{x_1} \end{pmatrix}.\] 
The quaternionic structure of $M$ insures that $\dim V_2 \geq 2$. We note that $V_1 \cap V_2 = \begin{pmatrix} 0 \\ 0 \end{pmatrix}$ which implies $\dim V_1 \oplus V_2 \geq 4$. As a consequence, $2 \mathrm{Re}(\overline{c}ba)$ is an eigenvalue of multiplicity at least $4$ of $(L_aL_bR_c + {}^{t}R_c {}^tL_b {}^{t} L_a)$. In particular, we may write:
\[ \det \bigg( \kappa.I_8 - L_aL_bR_c + {}^{t}R_c {}^tL_b {}^{t} L_a \bigg) = (\kappa - 2.\mathrm{Re}(\overline{c}ba))^4.T(a,b,c,\kappa), \]
where $T(a,b,c,\kappa)$ is weighted-homogeneous of degree $12$ with weights $(1,1,1,3)$.

\bigskip

We can now conclude the proof of the Theorem as we remember that the determinant of $\begin{pmatrix} \lambda_1.I_8 & R_c & {}^{t} L_b \\ {}^{t} R_c & \lambda_2.I_8 & L_a \\ L_b & {}^{t} L_a & \ll_3.I_8 \end{pmatrix}$ vanishes if and only if $2 \mathrm{Re}(cab) - \det_{\O}(A)$ is a (complex) eigenvalue of $(L_aL_bR_c +{}^{t}R_c {}^tL_b {}^{t} L_a)$. Hence the degeneracy locus of $N_{A}$ is scheme-theoretically given by the equation:

$$ \bigg(\det_{\O}(A)-2 \mathrm{Re}(cab) + 2 \mathrm{Re}(\overline{c}ba)\bigg)^4 \times \mathfrak{T}= 0,$$
where $\mathfrak{T}$ is the degree $12$ homogeneous polynomial in $a,b,c$ defined by $\mathfrak{T} = T(a,b,c,2 \mathrm{Re}(cab) - \det_{\O}(A))$. One easily proves that $\mathfrak{T} = \mathfrak{S}^2$ is a square and the explicit equation:
\begin{equation*}
\begin{split}
\mathfrak{S} & = \left(\ll_1|a|^2 + \ll_2|b|^2 + \ll_3|c|^2 - \ll_1\ll_2\ll_3 \right)^2 \\
                              & \hspace{0.5cm} - 4\left(\ll_1|a|^2 + \ll_2|b|^2 + \ll_3|c|^2 - \ll_1\ll_2\ll_3 \right)\mathrm{Re}(c)\mathrm{Re}(ab) \\
                              & \hspace{0.5cm} + 4\left(|b|^2|a|^2\mathrm{Re}(c)^2 + |c|^2 \mathrm{Re}(ab)^2 - |a|^2|b|^2|c|^2 \right),
\end{split}
\end{equation*}
has been first found by Jonathan Hauenstein using the software \textit{Bertini}.

\end{proof}

\end{subsection}

\begin{subsection}{A quasi-tautological rank $4$ sheaf on the Cartan cubic}

We shall first explain how $\mathrm{Spin}_7$ and $\mathrm{SL}_3$ act on $N_{A} = \begin{pmatrix} \lambda_1.I_8 & R_c & {}^{t} L_b \\ {}^{t}R_c & \lambda_2.I_8 & L_a \\ L_b & {}^{t} L_a & \l3.I_8 \end{pmatrix}.$ The action of $\mathrm{SL}_3$ is again by transpose-conjugation and is the same as in section 2.2. To describe the action of $\mathrm{Spin}_7$, we shall need a few more facts on triality (we refer to \cite{triality} for more details on the triality principle). We recall that there is an embedding (the triality embedding) of $\mathrm{Spin}_8$ in $\mathrm{SO}_8 \times \mathrm{SO}_8 \times \mathrm{SO}_8$ defined as follows:
\[ \mathrm{Spin}_8 = \{(T_1,T_2,T_3) \in \big(\mathrm{SO}_8\big)^3, \ T_1(x)T_2(y) = T_3(xy), \ \textrm{for all} \ (x,y) \in \O^2 \}.\]
There are three copies of $\mathrm{Spin}_7$ inside $\mathrm{Spin}_8$ which have remarkable group-theoretic properties with respect to the triality embedding. We are interested in one of them:
\[ \mathrm{Spin}_7 = \{(T_1,T_2,T_1) \in \big(\mathrm{SO}_8\big)^3, \ T_1(x)T_2(y) = T_1(xy), \ \textrm{for all} \ (x,y) \in \O^2 \}.\]
The projection map:
\begin{equation*}
\begin{split}
& \mathrm{Spin}_7 \barrow \mathrm{SO}_{8} \\
& (T_1,T_2,T_1) \longrightarrow T_2
\end{split}
\end{equation*}
is a double cover of $\mathrm{SO}_7$ embedded in $\mathrm{SO_8}$ as the stabilizer of $1 \in \mathbb{O}$. For any $T \in \mathrm{SO}_{8}$, we denote by $\K_{T}$ the element of $\mathrm{SO}_8$ defined by $\K_{T}(x) = \overline{T(\overline{x})}$. It is easily checked that if $(T_1,T_2,T_1)$ is a triality, then $(\K_{T_1},T_1,T_2)$ and $(T_2,\K_{T_1}, \K_{T_1})$ are equally trialities (but, in general, they lie in different copies $\mathrm{Spin}_7 \subset \mathrm{Spin}_8$). 

\bigskip

We now come back to the twisted eigenvalue problem. Let $A = \begin{pmatrix} \lambda_1 & c & \overline{b} \\ \overline{c} & \lambda_2 & a \\ b & \overline{a} & \l3 \end{pmatrix} \in \J_{3}(\O)$ and let $\begin{pmatrix} x \\ y \\ z \end{pmatrix} \in \O^3$ is in the twisted kernel of $A$, namely:
\begin{equation*}
\left\{
\begin{split}
\lambda_1 x + y c + \overline{b} z & \ = 0 \\
x \overline{c} + \lambda_2 y + a z & \ = 0 \\
b x + \overline{a} y + \lambda_3 z & \ = 0\\
\end{split}
\right.
\end{equation*}
A quick computation shows that for any $(T_1,T_2,T_1) \in \mathrm{Spin}_7$, the vector $\begin{pmatrix} T_1(x) \\ T_1(y) \\ T_2(z) \end{pmatrix} \in \O^3$ is in the twisted kernel of $\begin{pmatrix} \lambda_1 & T_2(c) & \overline{\K_{T_1}(b)} \\ \overline{T_2(c)} & \lambda_2 & T_1(a) \\ \K_{T_1}(b) & \overline{T_1(a)} & \l3 \end{pmatrix}$, that is:
\begin{equation*}
\left\{
\begin{split}
\lambda_1 T_1(x) + T_1(y) T_2(c) + T_1(\overline{b}) T_2(z) & \ = 0 \\
T_1(x) \overline{T_2(c)} + \lambda_2 T_1(y) + T_1(a) T_2(z) & \ = 0 \\
\K_{T_1}(b) T_1(x) + \overline{T_1(a)} y + \lambda_3 T_2(z) & \ = 0\\
\end{split}
\right.
\end{equation*}
For any $(T_1,T_2,T_1) \in \mathrm{Spin_7}$ and any $A = \begin{pmatrix} \lambda_1 & c & \overline{b} \\ \overline{c} & \lambda_2 & a \\ b & \overline{a} & \l3 \end{pmatrix} \in \J_{3}(\O)$,  we then define:
\[ (T_1,T_2,T_1).A = \begin{pmatrix} \lambda_1 & T_2(c) & \overline{\K_{T_1}(b)} \\ \overline{T_2(c)} & \lambda_2 & T_1(a) \\ \K_{T_1}(b) & \overline{T_1(a)} & \l3 \end{pmatrix}.\]
This defines an action of $\mathrm{Spin}_7$ on $\J_{3}(\O)$ which preserves the twisted eigenvalue problem. The main result of this section is related to the cokernel of $ \begin{pmatrix} \lambda_1.I_8 & R_c & {}^{t} L_b \\  {}^{t} R_c & \lambda_2.I_8 & L_a \\ L_b & {}^{t} L_a & \l3.I_8 \end{pmatrix}$. As in section 2.3, we work over the polynomial ring $R = \mathbb{C}[x_1, \ldots, x_{27}]$. We let $c = x_1e_1+ \ldots + x_8e_8$, $b = x_{9}e_1 + \ldots + x_{16}e_n$, $a =x_{17}e_1 + \ldots + x_{24}e_n$, $\lambda_1 = x_{25}e_1$, $\lambda_2 = x_{26}e_1$ and $\l3 = x_{27}e_1$, where $e_1, \ldots, e_8$ is the canonical basis of $\mathbb{O}$.

\begin{theo} \label{mopi4}
Let $N_{\O} = \begin{pmatrix} \lambda_1.I_8 & R_c & {}^{t} L_b \\  {}^{t}R_c & \lambda_2.I_8 & L_a \\ L_b & {}^{t} L_a & \l3.I_8 \end{pmatrix}$ in $\mathcal{M}_{24,24}(R)$ and let $E_{\O}$ be the cokernel of:
$$ \mathbb{C}^{24} \otimes \OO_{\mathbb{P}^{26}}(-1) \stackrel{N_{\O}}\barrow \mathbb{C}^{24} \otimes \OO_{\mathbb{P}^{26}}.$$
\begin{enumerate}
\item The sheaf $E_{\O}$ is $\langle \mathrm{Spin}_{7}, \mathrm{SL}_{3} \rangle$-equivariant (where $\langle \mathrm{Spin_7}, \mathrm{SL}_3 \rangle$ is the subgroup of $\mathrm{SL}_{24}$ generated by $\mathrm{SL}_3$ and $\mathrm{Spin_7}$ as above).
\item It is supported on the union of a cubic hypersurface isomorphic to $\CC_{\O}$ and the degree $6$ hypersurface given by the equation $\mathfrak{S} = 0$. Both hypersurfaces are $\langle \mathrm{Spin}_{7}, \mathrm{SL}_{3} \rangle$-invariant.
\item The restriction of $E_{\O}$ to the cubic hypersurface has generically rank $4$ while its restriction to the sextic $\mathfrak{S} = 0$ has generically rank $2$.
\end{enumerate}
\end{theo}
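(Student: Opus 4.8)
The plan is to derive all three assertions from the matrix factorisation of Theorem~\ref{mopi2} together with the triality identities recalled just above, the only genuinely non-formal point being the lower bound on the generic rank along $\mathfrak{S}$. For assertion (1) the $\mathrm{SL}_3$-equivariance is literally the computation of section 2.2: the conjugation $A\mapsto {}^{t}CAC$ involves only complex scalars, which commute with the block operators, so it is insensitive to the replacement of $L_c$ by $R_c$. For $\mathrm{Spin}_7$ I would first record the three operator identities forced by the trialities $(T_1,T_2,T_1)$ and $(\K_{T_1},T_1,T_2)$, namely $R_{T_2(c)}=T_1R_cT_1^{-1}$, $L_{T_1(a)}=T_1L_aT_2^{-1}$ and $L_{\K_{T_1}(b)}=T_2L_bT_1^{-1}$, the transposes being governed by ${}^{t}T_i=T_i^{-1}$. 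Feeding these into the nine blocks of $N_{(T_1,T_2,T_1).A}$ I expect the clean conjugation $N_{(T_1,T_2,T_1).A}=D\,N_{A}\,D^{-1}$ with $D=\mathrm{diag}(T_1,T_1,T_2)$, i.e.\ conjugation by the very operator $(x,y,z)\mapsto(T_1x,T_1y,T_2z)$ already seen to carry twisted kernels to twisted kernels. Conjugating the presentation of $E_{\O}$ then gives $g^{*}E_{\O}\simeq E_{\O}$, and together with $\mathrm{SL}_3$ this yields the asserted $\langle\mathrm{Spin}_7,\mathrm{SL}_3\rangle$-equivariance.

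For assertion (2), Theorem~\ref{mopi2} already identifies the support of $E_{\O}$ with the degeneracy divisor $\{(\det_{\O}(A)-2\mathrm{Re}(cab)+2\mathrm{Re}(\overline{c}ba))^{4}\,\mathfrak{S}^{2}=0\}$. The cubic factor is $\det_{\O}(A')$ for the $\C$-linear relabelling $A\mapsto A'$ noted after Theorem~\ref{mopi2}, so its zero locus is the image of $\CC_{\O}$ under a projective linear isomorphism and hence isomorphic to the Cartan cubic; the remaining factor is $\mathfrak{S}=0$. Invariance is then formal: equivariance of $N_{\O}$ makes $\det N_{\O}$ a relative invariant, so the whole divisor is $\langle\mathrm{Spin}_7,\mathrm{SL}_3\rangle$-stable; as this group is connected it fixes every irreducible component, and since the irreducible Cartan cubic enters with multiplicity $4$ while the components of $\mathfrak{S}$ enter with multiplicity $2$, the cubic and the sextic are each invariant.

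For assertion (3) the cokernel rank at a point is the corank of $N_{A}$, so I must compute the generic corank on each component. Reprising the elimination in the proof of Theorem~\ref{mopi2}, i.e.\ solving two of the three octonionic rows for two unknowns (licit on a dense open set), identifies the twisted kernel of $A$ with the $\kappa$-eigenspace of $\Phi:=L_aL_bR_c+{}^{t}R_c{}^{t}L_b{}^{t}L_a$ at $\kappa=2\mathrm{Re}(cab)-\det_{\O}(A)$; thus $\mathrm{corank}(N_{A})$ is the geometric multiplicity of this $\kappa$ for $\Phi$. On the cubic one has $\kappa=2\mathrm{Re}(\overline{c}ba)$, and the explicit eigenvectors $V_1\oplus V_2$ produced in the proof of Theorem~\ref{mopi2} give geometric multiplicity $\ge 4$; off $\mathfrak{S}$ the complementary factor $T(a,b,c,\kappa)=\mathfrak{S}^{2}$ is nonzero, so $\kappa$ is a root of the characteristic polynomial $(\kappa-2\mathrm{Re}(\overline{c}ba))^{4}T(a,b,c,\kappa)$ of multiplicity exactly $4$, forcing geometric multiplicity $4$ and generic rank $4$. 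On the sextic $\kappa\neq 2\mathrm{Re}(\overline{c}ba)$; since $\mathfrak{S}$ enters $\det N_{A}$ with multiplicity $2$, a Schur-complement estimate along a transversal to $\mathfrak{S}$ gives $\mathrm{corank}(N_{A})\le 2$.

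The hard part is the matching lower bound $\mathrm{corank}(N_{A})\ge 2$ on $\mathfrak{S}$, and it is genuinely delicate: $N_{A}$ is a complex symmetric matrix, and for a generic family of symmetric matrices the corank-$2$ stratum has codimension $3$, so one would naively expect corank $1$ along the codimension-$1$ hypersurface $\mathfrak{S}$. The resolution must come from the quaternionic structure already exploited in Theorem~\ref{mopi2}: fixing a quaternion subalgebra $M\cong\mathbb{H}\otimes\C$ containing $a,c$ and writing $\O=M\oplus Me$, the reduced system (\ref{sysmopi}) over $M\cong M_2(\C)$ is cut out by (anti)commutation conditions whose solution spaces are even-dimensional, which is precisely the mechanism giving $\dim V_i\ge 2$ there. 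I would carry out this even-dimensionality argument for the relevant eigenvalue on $\mathfrak{S}$, concluding that its eigenspace, being nonzero, has dimension $\ge 2$; this is mirrored by the fact that $\mathfrak{T}=\mathfrak{S}^{2}$ is a perfect square. Combined with the upper bound this yields generic rank $2$ along $\mathfrak{S}$, and as a safeguard, in keeping with the computational methods used elsewhere in the paper, the equality $\mathrm{corank}(N_{A})=2$ can be confirmed at one explicit generic point of $\mathfrak{S}$.
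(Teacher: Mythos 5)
Your parts (1) and (2), and your treatment of the cubic in part (3), are correct and essentially coincide with the paper's proof: the same triality identities give $N_{(T_1,T_2,T_1).A}=\mathrm{diag}(T_1,T_1,T_2)\,N_{A}\,\mathrm{diag}(T_1,T_1,T_2)^{-1}$, the support is read off from Theorem~\ref{mopi2}, and invariance of each component follows formally from equivariance. (On the cubic you are in fact more careful than the paper: combining the $V_1\oplus V_2$ eigenspace of dimension $\geq 4$ with the algebraic multiplicity being exactly $4$ off $\mathfrak{S}$ is a genuine two-sided proof that the corank is $4$ there.) The genuine gap is exactly the step you flag yourself: the lower bound $\mathrm{corank}(N_{A})\geq 2$ at a generic point of $\mathfrak{S}$. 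As written you have no argument for it. The even-dimensionality mechanism behind $V_1$ and $V_2$ is tailored to the particular eigenvalue $2\mathrm{Re}(\overline{c}ba)$ --- both spaces are defined by identities that produce precisely that eigenvalue --- and no analogous explicit eigenvectors are produced, or obviously producible, for the roots of $T(a,b,c,\kappa)$. The observation that $\mathfrak{T}=\mathfrak{S}^2$ is a perfect square only shows that on $\mathfrak{S}$ the relevant $\kappa$ is a root of the characteristic polynomial of $\Phi=L_aL_bR_c+{}^{t}R_c\,{}^{t}L_b\,{}^{t}L_a$ of algebraic multiplicity $2$; since $\Phi$ is merely complex symmetric, not self-adjoint, it may carry Jordan blocks, and algebraic multiplicity $2$ does not exclude geometric multiplicity $1$, i.e.\ corank $1$ with a single elementary divisor $t^{2}$. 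Finally, the numerical ``safeguard'' cannot close the gap: $\{\mathrm{corank}\geq 2\}$ is Zariski closed, so exhibiting one point of $\mathfrak{S}$ of corank $2$ is perfectly compatible with the generic corank on $\mathfrak{S}$ being $1$, unless the genericity of the chosen point is itself certified.

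For comparison, the paper's proof of part (3) is a one-liner that sidesteps all of this: it reads the generic rank along each component as the multiplicity of the corresponding irreducible factor in $\det N_{\O}$, i.e.\ as the length of $E_{\O}$ at the generic point of the component (over the discrete valuation ring at that generic point, Smith normal form identifies this length with the order of vanishing of the determinant). Once Theorem~\ref{mopi2} gives $\det N_{\O}=\big(\det_{\O}(A')\big)^{4}\mathfrak{S}^{2}$ and one checks that the cubic does not divide $\mathfrak{S}$, the answers $4$ and $2$ are immediate (both readings also implicitly use that $\mathfrak{S}$ is reduced). Under that length convention your whole section (3) collapses to the determinant factorization plus this coprimality check, and your proof is complete. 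Under your fiber-dimension (corank) reading --- which, to be fair, is what the paper itself invokes later when it asserts that a generic point of $\mathfrak{S}$ has rank $22$ as a $24\times 24$ matrix --- the paper's argument only yields the upper bounds, and the missing lower bound on $\mathfrak{S}$ is supplied neither by the paper nor by your sketch. So you have correctly isolated a real subtlety, but to turn your proposal into a proof you must either adopt the length reading, or actually prove the lower bound, e.g.\ by showing $\Phi$ is diagonalizable at a generic point of $\mathfrak{S}$ or by exhibiting two independent elements of the twisted kernel there.
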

In the following, we shall also denote by $\mathfrak{S}$ the hypersurface defined by $\mathfrak{S} = 0$.

\begin{proof}
Theorem \ref{mopi2} insures that the degeneracy locus of $N_{\O}$ is scheme-theoretically given by $(\mathrm{Det}_{\O} - \mathrm{Re}(cab) + \mathrm{Re}(\overline{c}ba) )^4 \times \mathfrak{S}^2 = 0$. We already noted that $(\det_{\mathbb{O}}(A) - \mathrm{Re}(cab) + \mathrm{Re}(\overline{c}ba) ) = \det_{\mathbb{O}} A'$, where $A' = \begin{pmatrix} \lambda_1 & \overline{c} & \overline{a} \\ c & \lambda_2 & b \\ a & \overline{b} & \l3 \end{pmatrix}$. As a consequence, the equation defines a cubic isomorphic to the Cartan cubic, which we denote by $\mathcal{C}'_{\mathbb{O}}$. The sheaf $E_{\O}$ is then supported on the union of $\CC'_{\O}$ and the degree $6$ hypersurface $\mathfrak{S}$. The rank of the restriction of $E_{\O}$ to $\CC'_{\O}$ is exactly $4$ as $(\mathrm{Det}_{\O} - \mathrm{Re}(cab) + \mathrm{Re}(\overline{c}ba))$ does not divide $\mathfrak{S}$. The same argument shows that the rank of $E_{\O}$ restricted to $\mathfrak{S}$ is $2$.

\bigskip

The $\mathrm{SL}_3$-equivariance of $N_{\O}$ follows from the definition of the $\mathrm{SL}_3$ action (by transpose-conjugation). Let us prove that $N_{\O}$ is also $\mathrm{Spin}_7$-equivariant. For any $(T_1,T_2,T_1) \in \mathrm{Spin}_7$, we know that $(\K_{T_1}, T_1, T_2)$ is also in $\mathrm{Spin}_8$. Hence, for any $(x,y,z,w) \in \O^4$ and any $(T_1,T_2,T_1) \in \mathrm{Spin}_7$, we have:

\[ R_{\overline{T_2(x)}} = T_1 {}^{t} R_{x} T_1^{-1}, \ L_{\overline{\K_{T_1}(y)}} = T_1 {}^{t} L_y T_2^{-1}, \ L_{T_1(z)} = T_1 L_z T_2^{-1} \ \textrm{and} \ L_{\overline{T_1}(w)} = T_2 {}^{t} L_{w} T_1^{-1} .\]
As a consequence, for any $(T_1,T_2,T_1) \in \mathrm{Spin}_7$, we find:
\[ (T_1,T_2,T_1). N_{\O} =  \begin{pmatrix} T_1 & & \\ & T_1 & \\ & & T_2 \end{pmatrix} N_{\O} \begin{pmatrix} T_1^{-1} & & \\ & T_1^{-1} & \\ & & T_2^{-1} \end{pmatrix},\]
which proves the $\mathrm{Spin}_7$-equivariance of $N_{\O}$. In fine, we can conclude that $E_{\O}$, the cokernel of $N_{\O}$, is $\langle \mathrm{Spin_7}, \mathrm{SL}_3 \rangle$-equivariant, where $\langle \mathrm{Spin_7}, \mathrm{SL}_3 \rangle$ is the subgroup of $\mathrm{SL}_{24}$ generated by $\mathrm{SL}_3$ and $\mathrm{Spin_7}$. Since $E_{\O}$ is $\langle \mathrm{Spin}_{7}, \mathrm{SL}_{3} \rangle$-equivariant, its support must be $\langle \mathrm{Spin}_{7}, \mathrm{SL}_{3} \rangle$-invariant. This proves that the sextic $\mathfrak{S}$ is $\langle \mathrm{Spin}_{7}, \mathrm{SL}_{3} \rangle$-invariant.

\end{proof}

\end{subsection}

\begin{rem}
On easily checks that the jumping locus of the restriction of $E_{\O}$ to the cubic $\mathcal{C}'_{\mathbb{O}}$ is not the singular locus of $\mathcal{C}'_{\mathbb{O}}$ (that is a copy of $\OP^2$) : it has strictly smaller codimension. it would be interesting to know if we can give an interpretation of $E_{\O}|_{A}$ for any $A$ in the smooth locus of $\mathcal{C}'_{\mathbb{O}}$ using only the octonionic geometry in $\O$ (as we can do for the tautological sheaves described in the introduction for the associative exceptional algebras $\R \otimes \C, \C \otimes \C$ and $\H \otimes \C$).
\end{rem}

\begin{subsection}{On the automorphism group of twisted eigenvalue problem}
It is well known (see the section 14 of \cite{cali}) that the subgroup of $\mathrm{E}_6$ generated by $\mathrm{SL}_3$ and $\mathrm{Spin_8}$ is $\mathrm{E}_6$ itself. In fact, one can prove that the subgroup of $\mathrm{E}_6$ generated by $\mathrm{SL}_3$ and $\mathrm{Spin_7}$ is also $\mathrm{E}_6$. The following argument has been communicated to me by Robert Bryant. Let $h \in \mathrm{SL}_3$ defined by:
\[ h = \begin{pmatrix} 0 & -1 & 0 \\ -1 & 0 & 0 \\ 0 & 0 & -1 \end{pmatrix}.\]
Let us denote by $K_2$ the copy of $\mathrm{Spin_7}$ embedded in $\mathrm{Spin}_8$  we considered in the previous section:
\[ K_2 = \{(T_1,T_2,T_1) \in \big(\mathrm{SO}_8\big)^3, \ T_1(x)T_2(y) = T_1(xy), \ \textrm{for all} \ (x,y) \in \O^2 \}.\]
It is easily checked that we have $hK_2h^{-1} = K_1$ as subgroups of $\mathrm{E}_6$, where $K_1$ is the copy of $\mathrm{Spin_7}$ embedded in $\mathrm{Spin}_8$ as:
\[ K_1 = \{(T_2,T_1,T_1) \in \big(\mathrm{SO}_8\big)^3, \ T_2(x)T_1(y) = T_1(xy), \ \textrm{for all} \ (x,y) \in \O^2 \}.\] It is known that these two copies of $\mathrm{Spin}_7$ are sufficient to generate $\mathrm{Spin}_8$ (see the section $14$ of \cite{cali}) and we are done.

\begin{prop}
The action of $\mathrm{Spin}_7$ on $\J_{3}(\O)$ we considered in Theorem \ref{mopi4} does not factor through the action $\mathrm{E}_6$ on $\J_{3}(\O)$.
\end{prop}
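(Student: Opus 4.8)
The plan is to show that the $\mathrm{Spin}_7$-action built in the previous section does not preserve the Cartan cubic; this is enough. Write $\rho$ for the $\mathrm{Spin}_7$-action on $\J_{3}(\O)$ of Theorem \ref{mopi4} and $\iota$ for the standard $\mathrm{E}_6$-action. Since $\iota(\mathrm{E}_6)$ preserves its fundamental cubic invariant $\det_{\O}$ exactly, and since $\mathrm{Spin}_7$ is connected and semisimple (hence has no nontrivial rational character), any factorization $\rho=\iota\circ\phi$ through a homomorphism $\phi\colon\mathrm{Spin}_7\to\mathrm{E}_6$, or more weakly any inclusion $\rho(\mathrm{Spin}_7)\subseteq\iota(\mathrm{E}_6)$, would force $\rho(\mathrm{Spin}_7)$ to preserve $\det_{\O}$. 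It therefore suffices to prove that $\det_{\O}$ is \emph{not} $\rho$-invariant.

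To isolate where the obstruction lives I would compare $\det_{\O}$ with the twisted cubic. By Theorem \ref{mopi4} the hypersurface $\CC'_{\O}=\{\det_{\O}(A')=0\}$ is $\rho(\mathrm{Spin}_7)$-invariant; being an irreducible cubic acted on by a connected group without characters, its defining polynomial $\det_{\O}(A')=\det_{\O}-2\mathrm{Re}(cab)+2\mathrm{Re}(\overline{c}ba)$ is itself $\rho$-invariant. Writing $c=\mathrm{Re}(c)+\mathrm{Im}(c)$, the term $\lambda_1\lambda_2\l3$, the norms $|a|^2,|b|^2,|c|^2$ (preserved because $T_1,\K_{T_1},T_2\in\mathrm{SO}_8$), and the scalar $\mathrm{Re}(c)\,\mathrm{Re}(ab)$ (one checks $\mathrm{Re}(T_1(a)\K_{T_1}(b))=\langle T_1(a),T_1(\overline{b})\rangle=\mathrm{Re}(ab)$ directly from $\K_{T_1}(x)=\overline{T_1(\overline{x})}$) are all $\rho$-invariant. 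Peeling these off, the invariance of $\det_{\O}(A')$ says exactly that $\mathrm{Re}(\mathrm{Im}(c)\,ba)$ is $\rho$-invariant, while the invariance of $\det_{\O}$ is equivalent to that of $\mathrm{Re}(\mathrm{Im}(c)\,ab)$. Everything thus reduces to showing that $\mathrm{Re}(\mathrm{Im}(c)\,ab)$ is not $\rho$-invariant, equivalently that $\mathrm{Re}(\mathrm{Im}(c)\,[a,b])$ is not.

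For the last step I would argue representation-theoretically. Under $\rho$ the three diagonal lines together with $\mathrm{Re}(c)$ give a fixed $\C^{4}$, the imaginary part $\mathrm{Im}(c)$ spans the $7$-dimensional vector representation $V_{7}$ of $\mathrm{SO}_7$ (via $T_2$), and the $a$- and $b$-slots are each a copy of the $8$-dimensional spin representation $\Delta_8$ (via $T_1$ and $\K_{T_1}$ respectively, which are isomorphic through the conjugation $x\mapsto\overline{x}$), so that
\[ \J_{3}(\O)\;\cong\;\C^{4}\oplus V_{7}\oplus\Delta_{8}\oplus\Delta_{8}. \]
The space of $\mathrm{Spin}_7$-invariant trilinear forms linear in each of $\mathrm{Im}(c),a,b$ is $\mathrm{Hom}_{\mathrm{Spin}_7}(V_{7}\otimes\Delta_{8},\Delta_{8})$, which is one-dimensional since $V_{7}\otimes\Delta_{8}\cong\Delta_{8}\oplus\Delta_{48}$. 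Now $\mathrm{Re}(\mathrm{Im}(c)\,ba)$ is a nonzero element of this line and so spans it, whereas $\mathrm{Re}(\mathrm{Im}(c)\,ab)$ differs from it by $\mathrm{Re}(\mathrm{Im}(c)\,[a,b])$, which is not identically zero. If $\mathrm{Re}(\mathrm{Im}(c)\,ab)$ were invariant it would equal $\lambda\,\mathrm{Re}(\mathrm{Im}(c)\,ba)$ for a scalar $\lambda$; restricting to the locus where $a$ and $b$ commute (there $\mathrm{Re}(\mathrm{Im}(c)\,ab)\not\equiv 0$, as one sees already with $b=a$) forces $\lambda=1$, whence $\mathrm{Re}(\mathrm{Im}(c)\,[a,b])\equiv 0$, a contradiction. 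Hence $\mathrm{Re}(\mathrm{Im}(c)\,ab)$ is not $\rho$-invariant, so $\det_{\O}$ is not $\rho$-invariant and $\rho(\mathrm{Spin}_7)\not\subseteq\iota(\mathrm{E}_6)$.

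The main obstacle is the clean identification of the $\mathrm{Spin}_7$-module structure of $\J_{3}(\O)$ under $\rho$ — especially verifying that the $\K_{T_1}$-twist on the $b$-slot genuinely yields a copy of $\Delta_8$ isomorphic to the $a$-slot — together with the one-dimensionality of the mixed invariants; once these are in place the non-proportionality of the two cubic terms is elementary. Should the module bookkeeping prove delicate, the same conclusion is reachable concretely by differentiating $t\mapsto\det_{\O}(\rho(\exp tX)A)$ at $t=0$ for a well-chosen $X=(X_1,X_2,X_1)\in\mathfrak{spin}_7$ and explicit octonions $a,b,c$ with $\mathrm{Re}(\mathrm{Im}(c)\,[a,b])\neq 0$; a single such example suffices to rule out invariance of $\det_{\O}$.
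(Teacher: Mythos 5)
Your proof is correct, and it takes a genuinely different route from the paper's. The paper argues by contradiction through ranks: if the action factored through $\mathrm{E}_6$, then by the generation remark preceding the proposition (Bryant's argument that the two $\mathrm{SL}_3$-conjugate copies of $\mathrm{Spin}_7$ generate $\mathrm{Spin}_8$, hence together with $\mathrm{SL}_3$ all of $\mathrm{E}_6$) every element of $\J_{3}(\O)$ could be diagonalized by the group generated by $\mathrm{Spin}_7$ and $\mathrm{SL}_3$; since that group visibly preserves the rank of the $24\times 24$ matrix $N_A$, every $N_A$ would have rank $0,8,16$ or $24$, contradicting the existence of rank-$22$ matrices at generic points of $\mathfrak{S}$ (part 3 of Theorem \ref{mopi4}). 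You instead show that the Cartan cubic itself is not invariant under the twisted action, which suffices since $\mathrm{E}_6$ preserves $\det_{\O}$: from the invariance of the twisted cubic $\mathcal{C}'_{\O}$ (part 2 of Theorem \ref{mopi4}, upgraded from hypersurface invariance to polynomial invariance via irreducibility and the absence of characters of $\mathrm{Spin}_7$) you peel off the individually invariant pieces ($\lambda_i$'s, norms, $\mathrm{Re}(c)\mathrm{Re}(ab)$) to reduce everything to the two trilinear forms $\mathrm{Re}(\mathrm{Im}(c)\,ba)$ (invariant) and $\mathrm{Re}(\mathrm{Im}(c)\,ab)$ (to be shown non-invariant); you then identify $\J_{3}(\O)\cong\C^{4}\oplus V_7\oplus\Delta_8\oplus\Delta_8$ as a $\mathrm{Spin}_7$-module and use $V_7\otimes\Delta_8\cong\Delta_8\oplus W$ (with $W$ irreducible of dimension $48$) to see that the invariant trilinear forms constitute a line; since $\mathrm{Re}(\mathrm{Im}(c)[a,b])\not\equiv 0$, the two forms cannot both lie on that line. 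Each approach buys something: the paper's is shorter and needs no representation theory, but it leans on diagonalizability under $\mathrm{E}_6$ and on the generation statement (which, as written, is applied to the hypothetical image of $\mathrm{Spin}_7$ inside $\mathrm{E}_6$ as though it were the standard copy); yours avoids both of these ingredients, is self-contained modulo standard $\mathrm{Spin}_7$ representation theory and two explicit octonion evaluations, and isolates the cleaner structural dichotomy that the twisted $\mathrm{Spin}_7$ preserves $\mathcal{C}'_{\O}$ but not $\CC_{\O}$ --- which is precisely what obstructs any factorization through $\mathrm{E}_6$.
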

\begin{proof}
Assume by absurd that it does. Then the remark above shows that $\langle \mathrm{Spin}_{7}, \mathrm{SL}_{3} \rangle = \mathrm{E}_6$. One immediately checks that the action of $\langle \mathrm{Spin}_{7}, \mathrm{SL}_{3} \rangle$ we defined on $\J_{3}(\O)$ preserves the rank, when we see elements of $\J_{3}(\O)$ as $24*24$ matrices. Since any element of $\J_{3}(\O)$ can be diagonalized by the action of $\mathrm{E}_6$ (see the chapter 14 of \cite{cali}), this would prove that any element of $\J_{3}(\O)$ has rank $0,8,16$ or $24$ as a $24*24$ matrix. But we know that a generic point on $\mathfrak{S} = 0$ has rank $22$ (see Theorem \ref{mopi4}), a contradiction.

\end{proof}

We now state the main result of this section:

\begin{theo} \label{homogeneity}
The action of $\mathrm{Aut}^{0}(\mathfrak{S}) \times \C^*$ on $\J_{3}(\O)$ is prehomogeneous.
\end{theo}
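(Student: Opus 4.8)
The plan is to show that $\mathrm{Aut}^0(\mathfrak{S}) \times \C^*$ has a dense orbit in $\J_3(\O)$ by exhibiting a single point whose orbit is $27$-dimensional. Since $\langle \mathrm{Spin}_7, \mathrm{SL}_3 \rangle \subseteq \mathrm{Aut}^0(\mathfrak{S})$ (the sextic $\mathfrak{S}$ is $\langle \mathrm{Spin}_7, \mathrm{SL}_3 \rangle$-invariant by Theorem \ref{mopi4}), it suffices to prove that $G := \langle \mathrm{Spin}_7, \mathrm{SL}_3 \rangle$ together with the scaling $\C^*$ already acts prehomogeneously. The strategy is the standard tangent-space criterion: I would pick a candidate generic point $A_0 \in \J_3(\O)$ and show that the infinitesimal action, namely the image of the map $\mathfrak{g} \oplus \C \barrow T_{A_0}\J_3(\O) = \J_3(\O)$ sending $(\xi, t) \mapsto \xi \cdot A_0 + t A_0$, is all of $\J_3(\O)$. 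Because $\dim \J_3(\O) = 27$ and $\dim(\mathfrak{spin}_7 \oplus \mathfrak{sl}_3 \oplus \C) = 21 + 8 + 1 = 30 \geq 27$, a dimension count does not obstruct surjectivity, so the question is genuinely whether the $30$-dimensional Lie algebra image spans.

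First I would write down the infinitesimal actions explicitly. The $\mathfrak{sl}_3$-action is $C \cdot A = {}^t C A + A C$ (the derivative of transpose-conjugation), which is transparent. The $\mathfrak{spin}_7$-action is obtained by differentiating the formula from Theorem \ref{mopi4}: a triality tangent vector $(\tau_1, \tau_2, \tau_1) \in \mathfrak{spin}_7 \subset (\mathfrak{so}_8)^3$ acts on $A$ by applying $\tau_2$ to $c$, $\tau_1$ to $a$, and $\k_{\tau_1}$ (the derivative of $\K$) to $b$, with the corresponding conjugate actions on the transposed entries and zero on the $\lambda_i$. I would then choose $A_0$ so that $a, b, c$ are generic octonions and $\lambda_1, \lambda_2, \l3$ are generic distinct scalars, and verify that the combined image fills the $27$ coordinates. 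The role of $\C^*$ (scaling) together with the two diagonal $\lambda$-directions coming from $\mathfrak{sl}_3$ should generate the three-dimensional $\lambda$-subspace, while the $\mathfrak{so}$-type actions sweep out the $24$ octonionic directions $a, b, c$.

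The main obstacle is the spanning verification in the octonionic directions, and this is where I expect the representation theory of $\mathrm{Spin}_7$ to do the essential work. The key structural fact is that under the projection $\mathrm{Spin}_7 \to \mathrm{SO}_7$ this action presents $\O$ as the $8$-dimensional spin representation of $\mathrm{Spin}_7$, which is \emph{transitive on the unit sphere}; crucially, the three entries $a, b, c$ transform via the three \emph{inequivalent} $8$-dimensional actions $(T_1, \K_{T_1}, T_2)$ associated by triality, so $\mathrm{Spin}_7$ does not act diagonally on $\O^{\oplus 3}$. I would argue that because these three copies are twisted against one another by triality, the orbit of a generic $(a,b,c)$ under $\mathrm{Spin}_7 \times \mathrm{SL}_3$ (via the induced action on the off-diagonal block) already has codimension far smaller than it would for the untwisted $\mathrm{SO}_7$ action of Remark \ref{rem1}; indeed the whole point of the twist is to break the $\mathrm{SO}_4$-stabilizer computed there. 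Concretely, I would reduce to checking that the stabilizer of $A_0$ in $\mathfrak{g} \oplus \C$ has dimension exactly $30 - 27 = 3$, which can be confirmed either by a direct (possibly \textit{Macaulay2}-assisted, in the spirit of the paper's other computations) rank computation of the $27 \times 30$ Jacobian of the action map at $A_0$, or by a triality-symmetry argument identifying the generic stabilizer. Establishing that this stabilizer has the predicted dimension $3$ — equivalently, that the infinitesimal action is surjective at $A_0$ — is the crux of the proof; everything else is bookkeeping.
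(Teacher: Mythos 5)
Your overall framework---the infinitesimal criterion that an orbit $G \cdot A_0$ is open iff the image of $\mathfrak{g}\oplus\C \to T_{A_0}\J_{3}(\O)$, $(\xi,t)\mapsto \xi\cdot A_0 + tA_0$, is all of $\J_{3}(\O)$---is a legitimate route to prehomogeneity in characteristic $0$, and it is genuinely different from what the paper does. But your proposal has a real gap: the single step that constitutes the theorem is never carried out. You correctly identify that everything reduces to showing the generic infinitesimal stabilizer has dimension exactly $3$ (equivalently, that the $27\times 30$ Jacobian of the action map has rank $27$ at some explicit $A_0$), and then you offer only two placeholders for this verification: an unspecified \textit{Macaulay2} rank computation that you have not performed, and an unspecified ``triality-symmetry argument identifying the generic stabilizer'' that you do not give. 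Heuristics such as ``the twist breaks the $\mathrm{SO}_4$-stabilizer of Remark \ref{rem1}'' are plausible but prove nothing; the untwisted action also has a $30$-dimensional Lie algebra acting on a $27$-dimensional space, yet fails to be prehomogeneous, so a dimension count plus the existence of a twist cannot substitute for the spanning verification. As it stands, the proposal is a proof outline, not a proof.

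By contrast, the paper's proof is entirely at the group level and fully explicit: it first proves a transitivity lemma (Lemma \ref{adams}, after Adams) stating that $\mathrm{Spin}_7$ acts transitively on $S^6$ times certain punctured half-spinor loci in $\O = \H \oplus \H.l$, and then runs a Gaussian-elimination-style reduction, alternating elements $(T_1,T_2,T_1)\in\mathrm{Spin}_7$ (to push the octonionic entries $a,b,c$ into small subalgebras such as $\C \oplus \C.i$) with explicit matrices $H \in \mathrm{SL}_3$ acting by ${}^{t}HAH$, until a generic $A$ is brought to a complex symmetric diagonal form and finally, via $\mathrm{GL}_3$, to the identity matrix $I_3$. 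This not only proves the orbit is dense but exhibits a concrete representative of the open orbit. If you want to salvage your approach, you must either actually exhibit $A_0$ and compute the rank (the computation is finite and checkable, in the spirit of the paper's Appendix A computation for $\mathfrak{aut}(\SS)$), or replace the appeal to symmetry by an argument with actual content---for instance the paper's Lemma \ref{adams}, at which point you are essentially reconstructing the paper's proof. One small factual correction as well: the three representations through which $a$, $b$, $c$ transform are not pairwise inequivalent; for this copy of $\mathrm{Spin}_7$ the entries $a$ and $b$ carry equivalent ($8$-dimensional spinor) representations via $T_1$ and $\K_{T_1}$, while $c$ carries the reducible vector-plus-trivial representation via $T_2$.
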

This result is slightly surprising as the subgroup of $\mathrm{SL}_{27}$ generated by $\mathrm{Spin}_7(\C)$ and $\mathrm{SL}_3(\C)$ is not $\mathrm{E}_6$ (see the above proposition). One also notices that the prehomogeneous space $(\J_{3}(\O), \mathrm{Aut}^{0}(\mathfrak{S})) \times \C^*$ does not appear in the list of Kimura-Sato \cite{kimura}. This is not surprising as it is unlikely to be an irreducible prehomogeneous space. It thus provides an interesting example of a non-irreducible prehomogeneous vector space which does not split as a sum of lower dimensional prehomoegeneous vector spaces. Theorem \ref{homogeneity} also shows that the geometry of the twisted octonionic eigenvalue problem differes eminently from that of the untwisted eigenvalue problem : the former is much more <<symmetric>> than the latter. Indeed, we noticed in section 2.2  that $ \mathrm{Aut}^{0}(\SS) \times \C^* = \mathrm{SO}_7 \times \mathrm{SL}_3 \times \C^*$ does not act prehomogeneously on $\J_{3}(\O)$.

\bigskip

\begin{proof}
We have shown in the previous section that the subgroup $G$ of $\mathrm{SL}_{27}$ generated by $\mathrm{SL}_3$ and $\mathrm{Spin}_7$ is included in $\mathrm{Aut}^{0}(\mathfrak{S})$. We are going to prove that the action of $\mathbb{C}^* \times G$ on $\J_{3}(\O)$ is prehomogeneous. We first start with:

\begin{lem} \label{adams}
Let $\{1,i,j,k,l,m,n,o\}$ be a basis of $\O$ over $\C$. We fix a splitting $\O = \H \oplus \H.l$, with $\H = \mathrm{Vect}(1,i,j,k)$. The group $\mathrm{Spin}_7$ acts transitively on the following varieties:
\begin{equation*}
\begin{split}
& S^6 \times \{(x +y.l) \in  \H \oplus \H.l, \ \langle x,y \rangle =0, x \neq 0, y \neq 0 \} \subset \mathrm{Im}(\O) \times \O, \\
& S^6 \times \left( \H \backslash \{0\} \oplus \{0 \} \right) \subset \mathrm{Im}(\O) \times \O , \\
& S^6 \times \left(\{0 \} \oplus \H.l \backslash \{0\} \right) \subset \mathrm{Im}(\O) \times \O,
\end{split}
\end{equation*}
where $S^6$ denotes the subvariety of $\mathrm{Im}(\O)$ defined by $|z|^2 = 1$.
\end{lem}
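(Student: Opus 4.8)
The plan is to prove all three statements uniformly by fibering the action of $\mathrm{Spin}_7$ over the first factor $\mathrm{Im}(\O)$. Recall that $\mathrm{Spin}_7 = \{(T_1,T_2,T_1)\}$ acts on a pair $(z,w)\in\mathrm{Im}(\O)\times\O$ by $(z,w)\mapsto(T_2(z),T_1(w))$, and that the projection $(T_1,T_2,T_1)\mapsto T_2$ is the double cover $\mathrm{Spin}_7\to\mathrm{SO}_7$, where $\mathrm{SO}_7$ is the stabilizer of $1\in\O$. First I would record that $\mathrm{SO}_7(\C)$ acts transitively on the affine quadric $S^6=\{z\in\mathrm{Im}(\O):|z|^2=1\}$, since over $\C$ the orthogonal group is transitive on vectors of a fixed nonzero norm. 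Hence $\mathrm{Spin}_7$ is transitive on $S^6$, and it suffices to fix a base point $z_0\in S^6$ and show that its stabilizer $H=\mathrm{Stab}_{\mathrm{Spin}_7}(z_0)$ acts transitively on each of the three fibres sitting over $z_0$.

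Next I would identify $H$ and its action on the second factor. The stabilizer of a nonisotropic vector in $\mathrm{SO}_7$ is $\mathrm{SO}_6$, so $H$ is its preimage in the spin group, namely $H\cong\mathrm{Spin}_6(\C)\cong\mathrm{SL}_4(\C)$, which is connected of dimension $15$. Through the projection $(T_1,T_2,T_1)\mapsto T_1$, the group $H$ acts on $\O$ by the restriction of the $8$-dimensional spin representation, which decomposes under $\mathrm{Spin}_6=\mathrm{SL}_4$ as $\C^4\oplus(\C^4)^*$, the sum of the standard and dual representations. The crucial and most delicate step is to make this isomorphism explicit through triality, matching the abstract decomposition $\C^4\oplus(\C^4)^*$ with the quaternionic splitting $\O=\H\oplus\H l$ of the statement and matching the $\mathrm{SL}_4$-invariant duality pairing with the bilinear form $\langle x,y\rangle$ occurring in the first variety. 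Concretely I would use the octonionic identities $u(ve)=(vu)e$, $(ue)(ve)=-\overline{v}u$, $ue=e\overline{u}$, $(ue)v=(u\overline{v})e$ already exploited in the proof of Theorem \ref{prop5}, together with the defining relation $T_1(x)T_2(y)=T_1(xy)$, to show that $H$ respects the relevant splitting and acts on its two summands as $\C^4$ and $(\C^4)^*$, with $\langle\cdot,\cdot\rangle$ realising the duality pairing. This triality bookkeeping is where I expect essentially all of the difficulty to lie; once it is in place the remainder is formal.

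Granting that dictionary, the three claims reduce to standard facts about $\mathrm{SL}_4(\C)$ acting on $\C^4\oplus(\C^4)^*$. For the second and third varieties one only needs that $\mathrm{SL}_4(\C)$ is transitive on $\C^4\setminus\{0\}$ and on $(\C^4)^*\setminus\{0\}$, which is immediate. For the first variety one needs transitivity on the set of nonzero incident pairs $\{(x,y):x\neq0,\ y\neq0,\ \langle y,x\rangle=0\}$. Here I would first observe that $\mathrm{GL}_4(\C)$ is transitive: carry $x_0$ to $x_1$ by some $g$, then move the resulting covector inside the hyperplane $x_1^{\perp}$ by the stabilizer of $x_1$, which acts on $x_1^{\perp}\cong\C^3$ as $\mathrm{GL}_3$. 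One then upgrades to $\mathrm{SL}_4$ by absorbing the leftover scalar, since for any $\mu$ the element $\mathrm{diag}(\mu,a,b,\mu)\in\mathrm{SL}_4$ with $\mu^2ab=1$ fixes the flag defined by $(x_1,y_1)$ and rescales the pair as $(x,y)\mapsto(\mu x,\mu^{-1}y)$.

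Finally I would reassemble the argument: transitivity of $\mathrm{Spin}_7$ on $S^6$ together with transitivity of the fixed stabilizer $H\cong\mathrm{SL}_4$ on each fibre yields transitivity of $\mathrm{Spin}_7$ on the corresponding total space, which is the assertion of the lemma. As a consistency check I would confirm that the three listed varieties are exactly the $H$-orbits of the expected dimensions, namely $7$ for the incident pairs and $4$ for each of $\H\setminus\{0\}$ and $\H l\setminus\{0\}$; comparing with $\dim\mathrm{Spin}_7=21$ this gives total orbit dimensions $13$, $10$ and $10$, so that no component is overlooked.
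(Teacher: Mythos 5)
You have faithfully reconstructed the strategy of the paper's own proof: the paper fibres over $S^6$ via the double cover $\mathrm{Spin}_7 \to \mathrm{SO}_7$, identifies the stabilizer of a point of $S^6$ with $\mathrm{Spin}_6 \simeq \mathrm{SL}_4$ acting on $\O = \C^4 \oplus (\C^4)^*$, asserts that $\langle x,y\rangle$ is the invariant duality pairing under the identification $\H \oplus \H.l = \C^4 \oplus (\C^4)^*$, and concludes by the same linear-algebra transitivity facts (the paper moves $y$ inside $x^{\perp}$ using the $\mathrm{SL}_3$ inside the stabilizer of $x$, where you use a $\mathrm{GL}_4$-then-rescale argument; both of these are fine). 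However, the step you flag as ``the crucial and most delicate step'' and defer --- matching the half-spin decomposition with the fixed splitting $\H \oplus \H.l$ and the duality pairing with $\langle x,y\rangle$ --- cannot be carried out, because it is false. If $(T_1,T_2,T_1)$ stabilizes $z_0 \in S^6$, the defining relation gives $T_1(xz_0) = T_1(x)T_2(z_0) = T_1(x)z_0$, so $T_1$ commutes with $R_{z_0}$; since $z_0^2 = -1$ and $\O$ is alternative, $R_{z_0}^2 = -\mathrm{Id}$, and the decomposition preserved by the stabilizer is $\O = W_+(z_0) \oplus W_-(z_0)$ into the $\pm\sqrt{-1}$-eigenspaces of $R_{z_0}$. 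These two summands are the half-spin representations and they are totally isotropic for the octonionic norm. The subspaces $\H$ and $\H.l$ are not isotropic, so they are not these summands; in fact, since $W_{\pm}(z_0)$ are non-isomorphic irreducibles, they are the only proper nonzero submodules, so $\H$ is not even stable under the stabilizer. Likewise, the space of $\mathrm{SL}_4$-invariant quadratic forms on $\C^4 \oplus (\C^4)^*$ is one-dimensional, so the only invariant quadratic form on $\O$ is (a multiple of) the norm $|x|^2 + |y|^2$, and $\langle x,y\rangle$ is not proportional to it; hence $\langle x,y\rangle$ is not invariant under the stabilizer of any point of $S^6$.

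This is not repairable bookkeeping: the lemma as stated is false, and the paper's own proof contains exactly the unjustified identification that you deferred. Indeed, all three $T_i$ lie in $\mathrm{SO}_8$, so the norm of the second component is a $\mathrm{Spin}_7$-invariant, yet it is non-constant on each of the three varieties: the pairs $(l,\,1 + j.l)$ and $(l,\,2 + j.l)$ both lie in the first variety (here $\langle 1, j\rangle = \langle 2, j \rangle = 0$), but $|1+j.l|^2 = 2$ while $|2+j.l|^2 = 5$, so no element of $\mathrm{Spin}_7$ maps one to the other. What is true --- and is all that the proof of Theorem \ref{homogeneity} actually uses --- is the complexification of the statement in Adams' book: $\mathrm{Spin}_7$ acts transitively on $\{|z|^2 = 1\} \times \{|w|^2 = c\}$ for every $c \neq 0$. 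This follows by your fibration argument once the dictionary is corrected: the stabilizer $\mathrm{SL}_4$ of $z_0$ has orbits on $\O = W_+(z_0) \oplus W_-(z_0)$ classified by the vanishing pattern of the two components and by the value of the duality pairing, and that pairing equals $\tfrac{1}{2}|w|^2$, not $\langle x,y \rangle$; for $|w|^2 = 0$ the relevant strata are defined by membership of $w$ in $W_{\pm}(z_0)$, not by conditions relative to the fixed splitting $\H \oplus \H.l$. Your purely linear-algebraic steps ($\mathrm{SO}_7(\C)$ transitive on the affine quadric, the $\mathrm{SL}_4$ and $\mathrm{SL}_3$ transitivity arguments, the determinant rescaling) are all correct; the error, in your proposal as in the paper, lies entirely in the dictionary between the octonionic data and the $\mathrm{SL}_4$-data.
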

This lemma is well-known, we refer to corollary 5.4 in \cite{JFadams}. We recall a proof for the convenience of the reader since the results of \cite{JFadams} are stated over $\mathbb{R}$ and there are some minor modifications to be made over $\mathbb{C}$.

\begin{proof}
The projection map:
\begin{equation*}
\begin{split}
& \mathrm{Spin}_7 \barrow \mathrm{SO}_{8} \\
& (T_1,T_2,T_1) \longrightarrow T_2
\end{split}
\end{equation*}
is a double cover of $\mathrm{SO}_7$ embedded in $\mathrm{SO_8}$ as the stabilizer of $1 \in \mathbb{O}$. Hence, $\mathrm{Spin}_7$ acts transitively on $S^6$ and the stabilizer of any point in $S^6$ is $\mathrm{Spin}_6$. We know that $\mathrm{Spin}_6 \simeq \mathrm{SL}_4$ when we make the identification $\H \oplus \H.l = \mathbb{C}^4 \oplus (\mathbb{C}^4)^*$. Furthermore, for any $(x+y.l) \in  \H \oplus \H.l$, the scalar product $\langle x,y \rangle$ is the perfect pairing between $\mathbb{C}^4$ and $(\mathbb{C}^4)^*$. This pairing is obviously invariant for the product of the natural and the contragedient representation of $\mathrm{SL}_4$. The action of $\mathrm{SL}_4$ on $\mathbb{C}^4 \backslash \{0\}$ is transitive and the stabilizer in $\mathrm{SL}_4$ of any non-zero $x \in \mathbb{C}^4$ contains $\mathrm{SL}_3$, which acts on $x^{\perp} \subset (\mathbb{C}^4)^*$. The transitive action of $\mathrm{SL}_3$ on $\mathbb{C}^3 \backslash \{0\}$ allows us to conclude the proof of the lemma.

\end{proof}

We can now proceed to the proof of Theorem \ref{homogeneity}. Recall that the action of $\mathrm{Spin}_7$ on $\J_{3}(\O)$ is given as follows. For any $(T_1,T_2,T_1) \in \mathrm{Spin_7}$ and any $A = \begin{pmatrix} \lambda_1 & c & \overline{b} \\ \overline{c} & \lambda_2 & a \\ b & \overline{a} & \l3 \end{pmatrix} \in \J_{3}(\O)$,  we have:
\[ (T_1,T_2,T_1).A = \begin{pmatrix} \lambda_1 & T_2(c) & \overline{\K_{T_1}(b)} \\ \overline{T_2(c)} & \lambda_2 & T_1(a) \\ \K_{T_1}(b) & \overline{T_1(a)} & \l3 \end{pmatrix}.\]

\noindent We start with $A = \begin{pmatrix} \lambda_1 & c & \overline{b} \\ \overline{c} & \lambda_2 & a \\ b & \overline{a} & \l3 \end{pmatrix}$ generic in $\J_{3}(\O)$. By lemma \ref{adams}, we can find $(T_1,T_2,T_1) \in \mathrm{Spin_7}$ such that $T_2(c) = r_1 + r_2.i$, with $r_1,r_2  \in \mathbb{C}$ non zeros (by genericity of $A$). Hence, we have:
\[A_1 = (T_1,T_2,T_1).A = \begin{pmatrix} \lambda_1 & r_1 + r_2.i & \overline{b_1} \\ r_1- r_2.i & \lambda_2 & a' \\ b_1 & \overline{a'} & \l3 \end{pmatrix},\]
with $a' = T_1(a)$ and $b_1 = \mathrm{K}_{T_1}(b)$. Let us write $a' = a_0' + a_1'.l$ with $a_0',a_1' \in \H$. Since $A$ is assumed generic, we can arrange $T_1$ in the previous transformation so that $\langle r_1 - r_2.i, a_1' \rangle \neq 0$. We plan to make the transformation $a' \longleftarrow a' - \frac{ \langle a_0',a_1' \rangle}{\langle r_1 - r_2.i, a_1' \rangle}(r_1-r_2.i)$. We thus consider $H = \begin{pmatrix} 1 & 0 &  \frac{-\langle a_0',a_1' \rangle}{\langle r_1 - r_2.i, a_1' \rangle} \\ 0 & 1 & 0 \\ 0 & 0 & 1 \end{pmatrix} \in \mathrm{SL}_3$ and we have:

\[ A_2 = {}^{t} H A_1H = \begin{pmatrix} \lambda_1 & r_1 + r_2.i & \overline{b_2} \\ r_1- r_2.i & \lambda_2 & a'' \\ b_2 & \overline{a''} & {\l3}' \end{pmatrix},\]
with $a'' = a'-\frac{ \langle a_0',a_1' \rangle}{\langle r_1 - r_2.i, a_1' \rangle}(r_1-r_2.i)$, $b_2 \in \O$ and ${\l3}' \in \C$. If we put $a'' = a_0''+a_1'.l$ with $a_0'' \in \H$, then we have $\langle a_0'',a_1' \rangle = 0$ by construction.  By lemma \ref{adams}, we can find $(T_1,T_2,T_1) \in \mathrm{Spin_7}$ such that $T_2(r_1 + r_2.i) = r_1 + r_2.n$ and $T_1(a'') = r_3.i + r_4.n$, with $r_3,r_4  \in \mathbb{C}$ non zeros (by genericity of $A$). Hence, we have:
\[A_3 = (T_1,T_2,T_1).A_2 = \begin{pmatrix} \lambda_1 & r_1 + r_2.n & \overline{b_3} \\ r_1- r_2.n & \lambda_2 & r_3.i + r_4.n \\ b_3 & -r_3.i-r_4.n & {\l3}' \end{pmatrix},\]
 with $b_3 \in \O$. We now plan to make the transformation $r_3i.+r_4.n \longleftarrow r_3.i+ r_4.n + \frac{r_4}{r_2}(r_1-r_2.n)$. We thus consider $H = \begin{pmatrix} 1 & 0 &  \frac{r_4}{r_2} \\ 0 & 1 & 0 \\ 0 & 0 & 1 \end{pmatrix} \in \mathrm{SL}_3$ and we have:

\[ A_4 = {}^{t} H A_3H = \begin{pmatrix} \lambda_1 & r_1+ r_2.n& \overline{b_4} \\ r_1-r_2.n & \lambda_2 & r_5+r_3.i  \\ b_4 & r_5-r_.i & {\l3}'' \end{pmatrix},\]
with $r_5 = \frac{r_4}{r_2}r_1 \in \mathbb{C}$, ${\l3}'' \in \C$ and $b_4 \in \O$. By lemma \ref{adams}, we can find $(T_1,T_2,T_1) \in \mathrm{Spin_7}$ such that $T_2(r_1 + r_2n) = r_1 + r_2.i$ and $T_1(r_5+r_3.i) = r_5 + r_3.i$. Hence, we have:
\[A_5 = (T_1,T_2,T_1).A_4 = \begin{pmatrix} \lambda_1 & r_1 + r_2.i & \overline{b_5} \\ r_1- r_2.i & \lambda_2 & r_5 + r_3.i \\ b_5 & r_5-r_3.i& {\l3}'' \end{pmatrix},\]
 with $b_5 \in \O$. We now plan to make the transformation $r_1 + r_2.i \longleftarrow r_1+ r_2.i + \frac{r_2}{r_3}(r_5-r_3.i)$. We thus consider $H = \begin{pmatrix} 1 & 0 &  0 \\ 0 & 1 & 0 \\ \frac{r_2}{r_3} & 0 & 1 \end{pmatrix} \in \mathrm{SL}_3$ and we have:

\[ A_6 = {}^{t} H A_5H = \begin{pmatrix} {\lambda_1}' & r_6& \overline{b_6} \\ r_6 & \lambda_2 & r_5+r_3.i  \\ b_5 & r_5-r_3.i & {\l3}'' \end{pmatrix},\]
with $r_6 = r_1 +\frac{r_2}{r_3}r_5 \in \mathbb{C}$, ${\lambda_1}' \in \C$ and $b_6 \in \O$. By lemma \ref{adams}, we can find $(T_1,T_2,T_1) \in \mathrm{Spin_7}$ such that  $T_1(r_5+r_3.i) = r_7 \in \C$ (and $T_2(r_6) = r_6$, automatically). Hence, we have:
\[A_7 = (T_1,T_2,T_1).A_6 = \begin{pmatrix} {\lambda_1}' & r_6& \overline{b_7} \\ r_6 & \lambda_2 & r_7  \\ b_7 & r_7 & {\l3}'' \end{pmatrix}.\]
We now consider:

\[H_1 = \begin{pmatrix} 1 & 0 & 0 \\ 0 & 1 & 0 \\ - \frac{r_6}{r_7} & 0 & 1 \end{pmatrix}, H_2 = \begin{pmatrix} 1 & 0 & 0 \\ 0 & 1 & \frac{-r_7}{\lambda_2} \\ 0 & 0 & 1 \end{pmatrix}, H_3 =  \begin{pmatrix} -1 & 0 & 0 \\ 0 & 0 & 1 \\ 0 & 1 & 0 \end{pmatrix}.\] We have:

\[ A_8 = {}^{t} H_3{}^{t} H_2 {}^{t} H_1 A_7H_1H_2H_3 =  \begin{pmatrix} \mu_1 & \overline{b_8} &0 \\ b_8 & \mu_2 & 0  \\ 0 & 0 & \mu_3 \end{pmatrix},\]
with $b_8 \in \O$ and $\mu_1, \mu_2, \mu_3 \in \C$.  By lemma \ref{adams}, we can find $(T_1,T_2,T_1) \in \mathrm{Spin_7}$ such that $T_2(\overline{b_8}) = r_8 + r_9.i$, with $r_8,r_9 \in \C$. Hence, we have:

\[A_9 = (T_1,T_2,T_1).A_8 = \begin{pmatrix} \mu_1 & r_8 + r_9.i &0 \\ r_8 - r_9.i & \mu_2 & 0  \\ 0 & 0 & \mu_3 \end{pmatrix}.\]
We let $H = \begin{pmatrix} 0 & 0 & 1 \\ 0 & -1 & 0 \\ 1 & 0 & 0 \end{pmatrix} \in \mathrm{SL}_3$. We have:

\[ A_{10} = {}^{t} H. A_9.H =  \begin{pmatrix} \mu_3 & 0 &0 \\ 0 & \mu_2 & r_8+r_9.i  \\ 0 & r_8-r_9.i & \mu_1 \end{pmatrix}.\]
By lemma \ref{adams}, we can find $(T_1,T_2,T_1) \in \mathrm{Spin_7}$ such that $T_1(r_8+r_9.i) = r_{10}$, with $r_{10} \in \C$. Hence, we have:
\[A_{11} = (T_1,T_2,T_1).A_{10} = \begin{pmatrix} \mu_3 & 0 &0 \\ 0 & \mu_2 & r_{10}  \\ 0 & r_{10} & \mu_1 \end{pmatrix}.\]
\noindent Notice that $A_{11}$ is symmetric with complex coefficients. Since $A \in \J_{3}(\O)$ was chosen generic, we have $\det(A_{11}) = \det_{\O}(A) \neq 0$ and we deduce that there exists $H \in \mathrm{GL}_3$ such that:
\[ {}^{t} H A_{11} H = \begin{pmatrix} 1 & 0 & 0 \\ 0 & 1 & 0 \\ 0 & 0 & 1 \end{pmatrix}.\]
We thus have proved that a generic element $A \in \J_{3}(\O)$ is in the same orbit as $ \begin{pmatrix} 1 & 0 & 0 \\ 0 & 1 & 0 \\ 0 & 0 & 1 \end{pmatrix}$ for the action of the subgroup of $\mathrm{GL}_{24}$ generated by $\mathrm{Spin}_7$ and $\mathrm{GL}_3$. As a consequence, the action of $ \mathbb{C}^* \times \mathrm{Aut}^{0}(\mathfrak{S})$ on $\J_{3}(\O)$ is prehomogeneous.

\end{proof}

\end{subsection}

\end{section}

\newpage

\appendix

\begin{section}{On the dimension the Lie-algebra $\mathfrak{aut}(\SS)$}
In this section, we provide a \textit{Macaulay2} code to get an upper bound on the dimension of $\mathfrak{aut}(\SS)$. We refer to the proof of Theorem \ref{mopiii1} in section $2$, where this computation is used to determine the neutral component of $\mathrm{Aut}(\SS)$.

\bigskip

We first need to write explicitly the equation of $\SS$ in $\textit{Macaulay2}$. This is somehow the main challenge, as the octonion algebra can not be simulated in \textit{Macaulay2}.

\begin{verbatim}
kk = ZZ/313

V = kk[x_1..x_8,y_1..y_27]

M=matrix{{x_1,-x_2,-x_3,-x_4,-x_5,-x_6,-x_7,-x_8},
{x_2,x_1,-x_4,x_3,-x_6,x_5,x_8,-x_7},
{x_3,x_4,x_1,-x_2,-x_7,-x_8,x_5,x_6},
{x_4,-x_3,x_2,x_1,-x_8,x_7,-x_6,x_5},
{x_5,x_6,x_7,x_8,x_1,-x_2,-x_3,-x_4},
{x_6,-x_5,x_8,-x_7,x_2,x_1,x_4,-x_3},
{x_7,-x_8,-x_5,x_6,x_3,-x_4,x_1,x_2},
{x_8,x_7,-x_6,-x_5,x_4,x_3,-x_2,x_1}}

I8 = matrix{{1,0,0,0,0,0,0,0},{0,1,0,0,0,0,0,0},{0,0,1,0,0,0,0,0},{0,0,0,1,0,0,0,0},
{0,0,0,0,1,0,0,0},{0,0,0,0,0,1,0,0},{0,0,0,0,0,0,1,0},{0,0,0,0,0,0,0,1}}

N = matrix{{x_1,-x_2,-x_3,-x_4,-x_5,-x_6,-x_7,-x_8},
{x_2,x_1,x_4,-x_3,x_6,-x_5,-x_8,x_7},
{x_3,-x_4,x_1,x_2,x_7,x_8,-x_5,-x_6},
{x_4,x_3,-x_2,x_1,x_8,-x_7,x_6,-x_5},
{x_5,-x_6,-x_7,-x_8,x_1,x_2,x_3,x_4},
{x_6,x_5,-x_8,x_7,-x_2,x_1,-x_4,x_3},
{x_7,x_8,x_5,-x_6,-x_3,x_4,x_1,-x_2},
{x_8,-x_7,x_6,x_5,-x_4,-x_3,x_2,x_1}}

L = mutableMatrix(M)

for k from 0 to 7 do (for i from 0 to 7 do 
(for j from 1 to 8 do L_(k,i)= sub(L_(k,i), x_j=> y_(j+8))))

S2 = matrix(L)

L = mutableMatrix(N)

for k from 0 to 7 do (for i from 0 to 7 do 
(for j from 1 to 8 do L_(k,i)= sub(L_(k,i), x_j=> y_(j+8))))

R2 = matrix(L)

L = mutableMatrix(M)
for k from 0 to 7 do (for i from 0 to 7 do 
(for j from 1 to 8 do L_(k,i)= sub(L_(k,i), x_j=> y_(j+16))))

S3 = matrix(L)

L = mutableMatrix(N)

for k from 0 to 7 do (for i from 0 to 7 do 
(for j from 1 to 8 do L_(k,i)= sub(L_(k,i), x_j=> y_(j+16))))

R3 = matrix(L)

L = mutableMatrix(M)

for k from 0 to 7 do (for i from 0 to 7 do 
(for j from 1 to 8 do L_(k,i)= sub(L_(k,i), x_j=> y_j)))

S1 = matrix(L)

L = mutableMatrix(N)

for k from 0 to 7 do (for i from 0 to 7 do 
(for j from 1 to 8 do L_(k,i)= sub(L_(k,i), x_j=> y_j)))

R1 = matrix(L)

L = mutableMatrix(M)

for k from 0 to 7 do (for i from 0 to 7 do L_(k,i)= sub(L_(k,i),
 x_1 =>y_1*y_17-y_2*y_18-y_3*y_19-y_4*y_20-y_5*y_21-y_6*y_22-y_7*y_23-y_8*y_24))

for k from 0 to 7 do (for i from 0 to 7 do L_(k,i)= sub(L_(k,i), 
x_2 =>y_1*y_18+ y_2*y_17+y_3*y_20-y_4*y_19+y_5*y_22-y_6*y_21-y_7*y_24+y_8*y_23))

for k from 0 to 7 do (for i from 0 to 7 do L_(k,i)= sub(L_(k,i), 
x_3 =>y_1*y_19-y_2*y_20+y_3*y_17+y_4*y_18+y_5*y_23+y_6*y_24-y_7*y_21-y_8*y_22))

for k from 0 to 7 do (for i from 0 to 7 do L_(k,i)= sub(L_(k,i), 
x_4 =>y_1*y_20+y_2*y_19-y_3*y_18+y_4*y_17+y_5*y_24-y_6*y_23+y_7*y_22-y_8*y_21))

for k from 0 to 7 do (for i from 0 to 7 do L_(k,i)= sub(L_(k,i), 
x_5 =>y_1*y_21-y_2*y_22-y_3*y_23-y_4*y_24+y_5*y_17+y_6*y_18+y_7*y_19+y_8*y_20))

for k from 0 to 7 do (for i from 0 to 7 do L_(k,i)= sub(L_(k,i), 
x_6 =>y_1*y_22+y_2*y_21-y_3*y_24+y_4*y_23-y_5*y_18+y_6*y_17-y_7*y_20+y_8*y_19))

for k from 0 to 7 do (for i from 0 to 7 do L_(k,i)= sub(L_(k,i), 
x_7=> y_1*y_23+y_2*y_24+y_3*y_21-y_4*y_22-y_5*y_19+y_6*y_20+y_7*y_17-y_8*y_18))

for k from 0 to 7 do (for i from 0 to 7 do L_(k,i)= sub(L_(k,i), 
x_8=>y_1*y_24-y_2*y_23+y_3*y_22+y_4*y_21-y_5*y_20-y_6*y_19+y_7*y_18+y_8*y_17))

S13 = matrix(L) 

for k from 0 to 7 do (for i from 0 to 7 do (for j from 2 to 8 do 
L_(k,i)= sub(L_(k,i), y_j=>-y_j)))

for k from 0 to 7 do (for i from 0 to 7 do (for j from 17 to 24 do 
L_(k,i)= sub(L_(k,i), y_j=> y_(j-8))))

Sb12 = matrix(L) 

L = mutableMatrix(Sb12) 

for k from 0 to 7 do (for i from 0 to 7 do (for j from 1 to 8 do 
L_(k,i)= sub(L_(k,i), y_j=> y_(16+j))))

for k from 0 to 7 do (for i from 0 to 7 do (for j from 9 to 16 do 
L_(k,i)= sub(L_(k,i), y_j=> y_(j-8))))

for k from 0 to 7 do (for i from 0 to 7 do (for j from 2 to 8 do 
L_(k,i)= sub(L_(k,i), y_j=> -y_j)))

Sb3b1 = matrix(L) 

for k from 0 to 7 do (for i from 0 to 7 do (for j from 17 to 24 do 
L_(k,i) = sub(L_(k,i), y_j=> y_(j-8))))

for k from 0 to 7 do (for i from 0 to 7 do (for j from 2 to 8 do 
L_(k,i)= sub(L_(k,i), y_j=> -y_j)))

Sb21 = matrix(L)

L = mutableMatrix(Sb12) 

for k from 0 to 7 do (for i from 0 to 7 do (for j from 2 to 8 do 
L_(k,i) = sub(L_(k,i), y_j=> -y_j)))

S12 = matrix(L)

for k from 0 to 7 do (for i from 0 to 7 do (for j from 10 to 16 do 
L_(k,i) = sub(L_(k,i), y_j=> -y_j)))

S1b2 = matrix(L)

L = mutableMatrix(Sb21) 

for k from 0 to 7 do (for i from 0 to 7 do (for j from 10 to 16 do 
L_(k,i) = sub(L_(k,i), y_j=> -y_j)))

S21 = matrix(L)





AA = (S13)*S2 + transpose(S2)*(Sb3b1)

P0 = AA_(0,0)

P1 = -y_27*((S1*transpose(S1))_(0,0))

P2 = -y_26*((S2*transpose(S2))_(0,0))

P3 = -y_25*((S3*transpose(S3))_(0,0))

P = y_25*y_26*y_27 + P0 + P1 + P2 + P3

PHI123 =  (1/2)*((S12-S21)*transpose(S3))_(0,0)




L = mutableMatrix(S1)

for k from 0 to 7 do (for i from 0 to 7 do 
L_(k,i) = sub(L_(k,i), y_1=> 0))

M1 = matrix(L)


L = mutableMatrix(S2)

for k from 0 to 7 do (for i from 0 to 7 do 
L_(k,i) = sub(L_(k,i), y_9=> 0))

M2 = matrix(L)


L = mutableMatrix(S3)

for k from 0 to 7 do (for i from 0 to 7 do 
L_(k,i) = sub(L_(k,i), y_17=> 0))

M3 = matrix(L)


G = matrix{{(M1*transpose(M1))_(0,0),(M1*M2)_(0,0),(M1*transpose(M3))_(0,0)},
{(transpose(M2)*transpose(M1))_(0,0),(M2*transpose(M2))_(0,0),
(transpose(M2)*transpose(M3))_(0,0)},
{(M3*transpose(M1))_(0,0),(M3*M2)_(0,0),(M3*transpose(M3))_(0,0)}}

ASS123square = 4*(det(G) - (PHI123)*(PHI123))

SODM = P^2 - 4*(PHI123)*P - ASS123square
\end{verbatim}

\noindent Here $\texttt{SODM}$ is the equation of the sextic $\SS$, $\texttt{P}$ is $\mathrm{Det}_{\O}$, $\texttt{PHI123}$ is $\phi(c,b,a)$ and $\texttt{ASS123square}$ is $ \bigg|[c,b,a] \bigg|^2$. We have used the formula (see lemma 6.56 and 6.61 in \cite{cali}):

\[ \bigg|[c,b,a] \bigg|^2 = 4 \bigg(\mathrm{Gram}(\mathrm{Im}(c),\mathrm{Im}(b), \mathrm{Im}(a)) - \phi(c,b,a)^2 \bigg),\]
which allows to get the expression of $\bigg|[c,b,a] \bigg|^2$ without computing triple products explicitly in $\O$. We recall that differential of the map $\varphi$ introduced in the proof of Theorem \ref{mopiii1} at a point $\left[(m)_{i,j} \right] \in M_{6 \times 27}$ is given by :
 
\begin{equation*}
\begin{split}
d \varphi_{\left[m_{i,j}\right]} \,\,\, : \,\,\, & M_{6 \times 27} / \mathfrak{aut}(\SS) \barrow S^6 \mathbb{C}^{6} \\
         & \left[q_{i,j} \right] \barrow \sum_{i=1}^{27} \left( \sum_{j=1}^{6} q_{i,j}.z_j \right) \times \dfrac{\SS}{\partial x_i}(m_{1,1}.z_1 + \cdots + m_{1,6}.z_6, \cdots, m_{27,1}.z_1 + \cdots + m_{27,6}.z_6),\\
\end{split}
\end{equation*}

We produce a matrix representing $d \varphi$ at a random point $\left[(m)_{i,j} \right] \in M_{6 \times 27}$.

\begin{verbatim}
D = mutableMatrix(V,1,27)

for i from 1 to 27  do D_(0,i-1) = diff(y_i,SODM)

M = random(ZZ^27,ZZ^6,Height=>50)

M2 = M**V

m = M2 * matrix{{x_1},{x_2},{x_3},{x_4},{x_5},{x_6}}

for i from 0 to 26 do (for j from 1 to 27 do D_(0,i) = sub(D_(0,i), y_j => m_(j-1,0)))

W = kk[x_1..x_6]

Dphi = mutableMatrix(W,1,27)

for j from 0 to 26 do Dphi_(0,j) = sub(D_(0,j),W)

\end{verbatim}

\noindent The matrix $\texttt{Dphi}$ represents $d \varphi$ at the random $\left[(m)_{i,j} \right] \in M_{6 \times 27}$ chosen. We are left to compute the rank of the image of $\texttt{Dphi}$ in $S^6 \mathbb{C}^6$.

\begin{verbatim}

I = (0)

for j from 0 to 26 do (for i from 1 to 6 do I = I + x_i*ideal(DphiW_(0,j)))

II = module I

III = II**kk 

rank III 

\end{verbatim}

\noindent The answer (=133) is obtained on a portable workstation in less than 15 minutes. The semi-continuity Theorem insures that the rank of $d \varphi$ at a generic point $\left[(m)_{i,j} \right] \in M_{6 \times 27}$ and in characteristic $0$ is bigger or equal to $133$.
\end{section}

\newpage
\bibliographystyle{alpha}
\bibliography{Cartan-rank4}

\begin{thebibliography}{Ada96}

\bibitem[Ada96]{JFadams}
J.F. Adams.
\newblock {\em Lectures on exceptional {L}ie groups}.
\newblock Chicago lectures in {M}athematics series. University of {C}hicago
  {P}ress, 1996.

\bibitem[AH]{Jonme}
R.~Abuaf and J.~D. Hauenstein.
\newblock The geometry and singularities of a few octonionic sextics.
\newblock In preparation.

\bibitem[BH16]{BargaHo}
M.~Bhargava and W.~Ho.
\newblock Coregular spaces and genus one curves.
\newblock {\em Cambridge J. Math.}, 4(1):1--119, 2016.

\bibitem[DeS01]{triality}
R.~DeSapio.
\newblock On spin(8) and triality: A topological approach.
\newblock {\em Exp. Math.}, 19:143--161, 2001.

\bibitem[DM98]{DM}
T.~Dray and C.A. Manogue.
\newblock The octonionic eigenvalue problem.
\newblock {\em Adv. Appl. Clifford Algebras}, 8(2):341--364, 1998.

\bibitem[GS]{M2}
D.~R. Grayson and M.~E. Stillman.
\newblock Macaulay2, a software system for research in algebraic geometry.
\newblock Available at http://www.math.uiuc.edu/Macaulay2/.

\bibitem[Har90]{cali}
F.R. Harvey.
\newblock {\em Spinors and {C}alibrations}.
\newblock Perspectives in {M}ath (vol 9). Academic {P}ress, 1990.

\bibitem[Jac54]{jacobson}
N.~Jacobson.
\newblock Structure of {A}lternative and {J}ordan {B}imodule.
\newblock {\em Osaka Math. Journal}, 6(1):1--71, 1954.

\bibitem[KS77]{kimura}
T.~Kimura and M.~Sato.
\newblock A classification of irreducible prehomogeneous vector spaces and
  their relative invariants.
\newblock {\em Nagoya Math. J.}, 65:1--155, 1977.

\bibitem[Ogi81]{O}
O.V. Ogievetski{\^i}.
\newblock A {C}haracteristic {E}quation for $3 \times 3$ {Matrices} over the
  {O}ctonions.
\newblock {\em Uspekhi Mat. Nauk}, 36:197--198, 1981.

\bibitem[Rob88]{roberts}
J.~Roberts.
\newblock {\em Projective embeddings of {A}lgebraic {V}arieties}.
\newblock Monograf{\'i}as del {Instituto} de Matem{\'a}ticas. Universidad
  Nacional Aut{\'o}noma de M{\'e}xico, 1988.

\end{thebibliography}
\end{document}